\documentclass[10pt]{amsart}
\usepackage[english]{babel}
\usepackage{graphicx}
\usepackage[numbers]{natbib}
\usepackage[colorlinks=true, allcolors=blue]{hyperref}
\usepackage{amsmath,amsthm,amssymb,amsfonts}
\usepackage{mathrsfs,xfrac,nicefrac,faktor,esint,dsfont}
\usepackage{enumitem}
\usepackage{tikz-cd}
\usepackage{accents}
\usepackage{xparse}
\usepackage{hyperref}

\allowdisplaybreaks

\usepackage{tikz} 
\usetikzlibrary{calc}
\usetikzlibrary{intersections}
\usetikzlibrary{patterns}

\title[NONUNIQUENESS of sMHD]{Non-uniqueness of weak solutions to the 3-D stationary MHD equations in Besov space with negative regularity index} 
\author{Peng Qu}
\address{School of Mathematical Sciences \& Shanghai Key Laboratory for Contemporary Applied Mathematics, Fudan University, China.}
\email{pqu@fudan.edu.cn}

\author{Mingxin Zhang}
\address{School of Mathematical Sciences, Fudan University, Shanghai 200433, China.}
\email{mxzhang24@m.fudan.edu.cn}

\date{\today} 

\date{\today}
\keywords{fractional MHD equations, stationary solutions, convex integration}
\subjclass{35A02, 
35K55, 
42B37, 
76W05, 
35Q35
}

\makeatother

\numberwithin{equation}{section}

\newtheorem{theorem}{Theorem}[section]
\newtheorem{definition}[theorem]{Definition}

\newtheorem{lemma}[theorem]{Lemma}
\newtheorem{remark}[theorem]{Remark}

\numberwithin{equation}{section}

\newcommand*{\Id}{\ensuremath{\mathrm{Id}}}
\newcommand*{\supp}{\ensuremath{\mathrm{supp\,}}}

\newcommand{\aint}{{\fint}}

\newcommand{\T}{{\mathbb{T}}}

\newcommand{\ru}{\mathring{R}_{q}^u}
\newcommand{\rb}{\mathring{R}_{q}^B}

\renewcommand{\P}{P}

\renewcommand{\div}{{\mathrm{div}}}
\newcommand{\curl}{{\mathrm{curl}}}

\renewcommand{\d}{{\rm d}}

\newcommand{\norm}[1]{\lVert#1\rVert}

\newcommand{\la}{\lambda_q}

\newcommand{\rs}{r}
\newcommand{\xs}{\xi}

\newcommand{\rotimes}{\mathring{\otimes}}
\newcommand{\Hs}{{H}^{-s}}

\newcommand{\R}{{\mathbb R}}
\newcommand{\lbb}{\overline{\lambda}}

\def\lbb{\lambda}

\def\9{{\infty}}
\def\ve{{\varepsilon}}
\def\na{{\nabla}}

\def\({\left(}
\def\){\right)}

\begin{document}

\maketitle 

\begin{abstract}
\noindent
In this paper, we study the 3-D incompressible fractional stationary MHD equations on the torus $\T^3$.
For fractional power indices $\alpha_1,\alpha_2 >0,$ we prove that there exist infinite non-trivial stationary singular solutions of 3-D  fractional MHD equations via convex integration.
In particular, this result shows the non-uniqueness of weak solutions to the 3-D stationary MHD equations in some sub-critical Besov spaces.

\end{abstract}

\section{Introduction and main results}
\subsection{Introduction}

We would consider the stationary 3-D incompressible fractional MHD equations on the torus $\T^3:=[-\pi,\pi]^3$:
\begin{equation}\label{equa-sMHD}
	\left\{\aligned
	& \nu_1 (-\Delta)^{\alpha_1} u+ \div(u\otimes u - B \otimes B ) + \nabla P=0,  \\
	& \nu_2 (-\Delta)^{\alpha_2} B+ \div(B\otimes u - u \otimes B ) =0, \\
	& \div u = 0, \quad \div B = 0,\\
    & \hat{u}(0)= 0, \quad \hat{B}(0)= B_{*},
	\endaligned
	\right.
\end{equation}
where $u=(u_1,u_2,u_3)^\top(x)\in \R^3$ denotes the velocity field, 
$B=(B_1,B_2,B_3)^\top(x)\in \R^3 $ denotes the magnetic field, 
$ P=P(x)\in \R $ denotes the fluid pressure. 
Moreover, $\alpha_1$, $\alpha_2 \in (0 , +\infty)$ and $(-\Delta)^\alpha$ denotes the fractional Laplacian operator defined on distributions by the Fourier transform
\[
\mathscr{F}((-\Delta)^{\alpha}u)(k) = |k|^{2\alpha}\mathscr{F}(u)(k), \quad k \in \mathbb{Z}^3.
\]
The constants $\nu_1,\nu_2 > 0$ correspond to the viscosity and resistivity coefficients, respectively.
For the most important case $\alpha_1 = \alpha_2 = 1$, one knows the following critical spaces' chain:
\[ \dot{H}^{\frac{1}{2}} \hookrightarrow L^3 \hookrightarrow \dot{B}^{-1+\frac{3}{q}}_{q,r} \hookrightarrow \mathrm{BMO}^{-1} \hookrightarrow \dot{B}^{-1}_{\infty,\infty}, \quad 3 < q < \infty. \]
And for general $\alpha_1 = \alpha_2 = \tilde{\alpha}$, we can check that the  homogeneous Besov space $\dot{B}^{-\theta}_{q,r}(\mathbb{R}^3)$ is critical when the exponent satisfies
$$\theta = 2\tilde{\alpha} - 1 - \frac{3}{q}.$$
For $\tilde{\alpha} > 1/2$, there exist critical and sub-critical Besov spaces with negative regularity index.
And in this paper, we would show that the weak solutions are non-unique in the Besov spaces with negative indices, even if the space 
is critical or sub-critical.

The MHD equations play an important role in understanding the interaction between electrical fluids and magnetic fields, with many applications across various physial scenarios.
In the development of the corresponding mathematical theory, M. Sermange and R. Temam \cite{ST83} established the local well-posedness of the MHD equations in both two and three dimensions for smooth initial data 
and J. Wu \cite{wu03} proved the global weak solutions for $L^2$ initial data when the viscosity and resistivity are positive 
and global smooth solutions exist for initial data in $H^{5/2}$ when the viscosity and resistivity index $\alpha$ are higher. 
More results on well-posedness and conservative properties of MHD equations can see \cite{CKS97,fl20,FL22,KL07,taylor86}.
In the past decade, it have made much progress on non-uniqueness of MHD equations. R. Beekie, T. Buckmaster, and V. Vicol \cite{bbv20} established the existence of weak solutions to the ideal MHD equations that fail to conserve magnetic helicity.
M. Dai \cite{dai18} proved non-uniqueness of Leray-Hopf weak solutions of the 3D Hall-MHD system. 
Y. Li, Z. Zeng, and D. Zhang \cite{lqzz22} proved the non-uniqueness of weak solutions to the three-dimensional hyperviscous MHD equations.  
C. Miao and W. Ye \cite{MY24} proved the non-uniqueness of weak solution to the viscous and resistive MHD system in $ C([0,T]; L^2(\mathbb{T}^3))$ if the solutions have at least one interval of regularity. 
C. Miao, Y. Nie and W. Ye \cite{MNY25} proved Onsager-type conjecture for the Els\"asser energies of the ideal MHD equations. 
At the same time,  Y. Nie and W. Ye \cite{NY2025} proved a sharp and strong nonuniqueness for a class of weak solutions to the three-dimensional magnetohydrodynamic system. 
A. Enciso, J. {Pe{\~n}afiel-Tom{\'a}s} and D. Peralta-Salas \cite{EPT26} constructed H\"older continuous solutions to ideal MHD with nonzero helicity that do not preserve the Els\"asser energies. 
Many of these non-uniqueness results are based on the method of convex integration.

Thanks to the breakthroughs of C. De Lellis and L. Sz\'{e}kelyhidi \cite{dls09, DS13}, remarkable advances have been achieved in the study of non-uniqueness of weak solutions to syetems of fuild dynamics in the past two decades.
Inspired by J. Nash's $C^1$ isometric embedding theorem \cite{Nash1954}, they developed the convex integration method and proved the existence of wild solutions for the 3-D Euler equations.
Following this, Isett \cite{I18} proved the flexibility part of Onsager's conjecture for the three-dimensional Euler equations in the class $C_{t,x}^{1/3-}$.
Building on their work, significant results have been made, for instance \cite{bdis15, nv23}.

Another major breakthrough came in 2019 when T. Buckmaster and V. Vicol \cite{bv19b} established the non-uniqueness of solutions to the 3-D Navier-Stokes equations in the space $C_tL^2_x$.
Their work introduced the method of intermittent convex integration. 
A. Cheskidov and X. Luo \cite{cl20.2} established sharp non-uniqueness results for the incompressible Navier-Stokes equations for any dimension $d \geq 2$, by introducing temporal intermittency. 
These methods have been successfully applied to other models.
For instance, the hyperviscous 3-D Navier-Stokes equations \cite{lqzz24,lt20}, the low viscosity 2-D incompressible equations \cite{lq20},  
the transport equations \cite{cl21,cl22,ms18}, and the stationary Navier-Stokes equations \cite{luo19}.

Most of the non-uniqueness results are obtained in super-critical spaces with negative regularity index. 
M. Christ \cite{CM05} proved that solutions of the Cauchy problem for the Navier-Stokes equation are not unique in $C^0([0,1],H^s(\mathbb{T}^2))$ for each $s < 0$.
Also, M. Christ \cite{CS05} proved that Generalized solutions of the Cauchy problem for the one-dimensional periodic nonlinear Schrodinger equation with cubic or quadratic nonlinearities are not unique $C^0([0,1],H^s(\mathbb{T}^2))$ for each $s < 0$. 
Recently, M. Coiculescu and S. Palasek \cite{MS26} demonstrated the non-uniqueness of solutions to the 3D Navier-Stokes equations in the critical space $ \mathrm{BMO}^{-1} $. 
E. Ashkarian, A. Bhargava, N. Gismondi and M. Novack \cite{ABGM25} proved that there exist nontrivial weak solutions to stationary 2D Navier-Stokes equations belonging to $\displaystyle\bigcap_{\varepsilon\in(0,1)} L^{2-\varepsilon}(\mathbb{T}^2)\cap \dot{H}^{-\varepsilon}(\mathbb{T}^2)$. 
And N. Gismondi and A. F. Radu \cite{NA25} proved that the non-uniqueness of non-trivial solutions to the stationary dissipative surface quasi-geostrophic equation on the two dimensional torus which lie strictly below the critical regularity threshold of $\dot{H}^{-1/2}(\mathbb{T}^2)$. 
M. Fujii \cite{MI26} established the non-uniqueness of solutions to the Navier-Stokes equations in the critical space ${B}_{q,r}^{-1+\frac{d}{q}}$ for any dimensions $d \geq 2$. 
Furthermore, A. Cheskidov and H. Hou \cite{CH26} proved that non-uniqueness holds for the Navier-Stokes equations in any space ${B}_{q,r}^{-\theta}\ (\theta > 0)$ for any dimensions $d \geq 2$ which includes the sub-critical cases.
Inspired by \cite{ABGM25} \cite{CH26}  and \cite{MI26}, in this paper we intend to prove the non-uniqueness of the stationary MHD equations in Besov spaces ${B}_{q,r}^{-\theta}\ (\theta > 0), $ espically including those in the subcritical regime.

\subsection{Main results}
In this section, we aim to state Theorem \ref{thm2}, which are the main non-uniqueness result in this paper. 
First, we recall the definition of Besov spaces.

For any distribution $u \in \mathscr{D}'(\mathbb{T}^3)$, let $\hat{u}$ represent its Fourier transform, which is an element of $\mathcal{S}'(\mathbb{Z}^3)$ defined through the duality pairing
$$\hat{u}(k) = \langle u, e^{-2\pi i k \cdot x} \rangle, \quad k \in \mathbb{Z}^3,$$ 
where the bracket $\langle\cdot,\cdot\rangle$ signifies the action of distributions on smooth periodic functions.

Define $\chi \in C^{\infty}_c(\mathbb{R}^3)$ be a radial bump function that is supported in $B(0,1)$ and equals 1 on $B(0,9/10)$. 
For any dyadic integer $K \ge 1$, we can define the \emph{Littlewood-Paley projection $P_{\le K}$} by
\begin{align}
	\label{Pkoper1}
	 P_{\le K} (u)(x) := \sum_{k \in \mathbb{Z}^3} \chi(k/K) \hat{u}(k) e^{2\pi i k \cdot x}, 
\end{align}
and the companion projections
\begin{align}
	\label{Pkoper2}
P_{>K} := \operatorname{Id} - P_{\le K},\quad P_1 := P_{\le 1} ,\quad P_K := P_{\le K} - P_{\le K/2} ~ \text{for } K \ge 2. 
\end{align}
Let $s \in \mathbb{R}$ and $q,r \in [1,\infty]$. The \emph{Besov space} $B^s_{q,r}$ is defined by 
\[ \|f\|_{B^s_{q,r}} := \left( \sum_{K} K^{sr} \|P_K f\|_{L^q}^r \right)^{1/r} < \infty, \] with the standard modification for the case $r=\infty$
where $f \in \mathscr{D}'(\mathbb{T}^3)$.

Since the solutions have low regularity, it is neccessary to define the tensor product in the space $H^{-s}$. 
Recall the Sobolev space $\Hs$:
\[ \|u\|_{\Hs} := \left( \sum_{k \in \mathbb{Z}^3} |k|^{{-2s}} |\hat{u}(k)|^2 \right)^{1/2} < \infty. \]
A bit different from \cite{CH26}, we define 
\begin{equation}
    \label{e:u*v}
    u \otimes v := \sum_{\tau_1, \tau_2} P_{\tau_1} u \otimes P_{\tau_2} v \quad \text{ in } \Hs.
\end{equation}
 where $u\in \mathscr{D}'(\mathbb{T}^3)$, $ s>0$, $\tau_i\ (i=1,2)$ are dyadic integers and $\displaystyle\sum_{\tau_1, \tau_2}$ denotes summation among all dyadic integer pairs.
 If 
 \begin{equation}
    \label{e:u*u}
    \sum_{\tau_1, \tau_2}\| P_{\tau_1} u \otimes P_{\tau_2} u \| _{\Hs }< \infty,
\end{equation}
we can say that $u \otimes u$ is well-defined as a paraproduct in $\Hs$.

Our strategy is to construct non-trivial weak singular solutions to this stationary system via convex integration. 
First, we give the definition of the weak solutions.
\begin{definition} [Weak solution]
For the cases $s>\max\{\alpha_1,\alpha_2\}+1/2$, we say that $(u, B)$ is a weak solution to the stationary MHD equations \eqref{equa-sMHD} if

\begin{itemize}
    \item $\hat{u}(0)= 0, \hat{B}(0)= B_{*};$	
   \item Both $u(x)$ and $B(x)$ are divergence free in the sense of distributions;
   \item $u \otimes u,B\otimes B,u \otimes B,B \otimes u$ are well-defined in $\Hs$;
   \item Equation \eqref{equa-sMHD} holds in the sense of distributions, i.e., for any divergence-free test functions $\varphi \in C^\infty(\mathbb{T}^3)$,
   \[
	-\langle u \otimes u - B \otimes B, \nabla\varphi\rangle + \langle u, \nu_1(-\Delta)^{\alpha_1}\varphi\rangle = 0,
	\]
	\[
	-\langle B \otimes u - u \otimes B, \nabla\varphi\rangle + \langle B, \nu_2(-\Delta)^{\alpha_2}\varphi\rangle = 0,
	\]
where $\langle\cdot,\cdot\rangle$ denotes the dual pairing between distributions and smooth functions on $\mathbb{T}^3$.
\end{itemize}
\end{definition} 
\begin{remark}
In particular, we emphasize that it is required $\hat{B}(0) = B_{*}$, 
which ensures that the constructed $B$ can not be trivial when $B_{*}\neq 0$.
\end{remark}
\begin{remark}
Here, we use $H^{-s}$ instead of $\dot{H}^{-s}$ for the tensor products 
in order to avoid a $\div$ evaluated at infinity, which is different from the ones in \cite{ABGM25} and \cite{CH26}. 
\end{remark}

Next, we present Theorem \ref{thm2},
which demonstrates the existence of infinitely many singular solutions for the stationary MHD equations.
\begin{theorem}[Existence of non-trivial stationary singular solutions]
    \label{thm2}
	Given $$0<\theta<1,q,r \in [1,\infty],$$
	there exist infinitely many weak solutions
	$$
	(u,B)\in
	B^{-\theta}_{q,r} (\mathbb{T}^3)
	$$
	 to the 3-D stationary fractional MHD equations \eqref{equa-sMHD}. 
\end{theorem}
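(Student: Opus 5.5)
We will prove Theorem \ref{thm2} by a convex integration scheme adapted from \cite{CH26} and \cite{ABGM25}, working with the stationary MHD--Reynolds system
\begin{equation*}
\left\{\begin{aligned}
&\nu_1(-\Delta)^{\alpha_1}u_q+\div(u_q\otimes u_q-B_q\otimes B_q)+\nabla P_q=\div \ru,\\
&\nu_2(-\Delta)^{\alpha_2}B_q+\div(B_q\otimes u_q-u_q\otimes B_q)=\div \rb,
\end{aligned}\right.
\end{equation*}
with $\div u_q=\div B_q=0$, $\hat u_q(0)=0$, $\hat B_q(0)=B_*$. Here $\ru$ is a symmetric traceless stress and $\rb$ is antisymmetric.

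\textbf{Starting data.} We start from $(u_0,B_0)=(0,B_*)$, which solves the system exactly with zero stresses. To obtain infinitely many distinct solutions, we instead start from $u_0=0$ and a smooth divergence-free $B_0$ with mean $B_*$ and a prescribed $q$-independent low mode (for instance $B_0=B_*+\epsilon\,\mathrm{Re}(a e^{ix_1})$ with $a\perp e_1$). Its error stresses are then smooth. Varying $\epsilon$ gives different limits, because the perturbations live at frequencies $\lambda_{q+1}\to\infty$ and do not alter the lowest Fourier modes.

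\textbf{Iterative step.} Given $(u_q,B_q,\ru,\rb)$ with $\|\ru\|+\|\rb\|\le\delta_{q+1}$ in a weak norm, we add perturbations $w_{q+1},d_{q+1}$ at frequency $\lambda_{q+1}$, built from the MHD building blocks of \cite{bbv20,lqzz22}. These are intermittent Mikado-type flows along directions $\xi\in\Lambda_B$, with pairs $(\xi,\xi\times\cdot)$ that cancel the antisymmetric magnetic stress, plus a second family for the symmetric velocity stress after the magnetic contribution is subtracted. The amplitudes come from the geometric decomposition lemmas applied to mollified stresses. The quadratic terms $w\otimes w-d\otimes d$ and $d\otimes w-w\otimes d$ then cancel $\ru,\rb$ up to high-frequency remainders, which are inverted by the antidivergence.

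The key feature for negative regularity $-\theta$ is that we take the perturbation large, with $\|w_{q+1}\|_{L^2}\sim\delta_{q+1}^{1/2}$ possibly growing. Its $B^{-\theta}_{q,r}$ norm is still controlled by $\lambda_{q+1}^{-\theta}\|w_{q+1}\|_{L^{q}}$, and intermittency makes this small once $\lambda_{q+1}$ is chosen superexponentially large. Summing over $q$, the series $\sum_q\|w_{q}\|_{B^{-\theta}_{q,r}}$ converges, and the sum is independent of $r$ since each piece is essentially frequency-localized.

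\textbf{Error estimates.} The new stresses split into:
\begin{itemize}
\item a linear dissipative error $\mathcal R((-\Delta)^{\alpha_i}w)$, of size about $\lambda_{q+1}^{2\alpha_i-1}\|w\|$ in a negative norm;
\item a transport/Nash error involving $u_q,B_q$;
\item an oscillation error;
\item a mollification error.
\end{itemize}
The stresses are measured in a norm such as $H^{-s'}$ or $L^1$-type with negative regularity, chosen so that the dissipative error is harmless. Measuring in $H^{-s}$ with $s>\max\{\alpha_1,\alpha_2\}+1/2$ gains $\lambda_{q+1}^{-s}$ against $\lambda_{q+1}^{2\alpha_i}$ after one antidivergence, which is exactly why the weak-solution definition uses that $s$.

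\textbf{Passing to the limit.} We must verify that the paraproduct sums in \eqref{e:u*u} converge in $\Hs$ for the limit. Because consecutive perturbations are frequency-separated, $P_{\tau_1}u\otimes P_{\tau_2}u$ has $H^{-s}$ norm bounded by $\max(\tau_1,\tau_2)^{-s+3/2}$ times $L^2$ norms. The choice of growth of $\delta_q$ against $\lambda_q^{s-3/2}$ makes the double sum finite. Then $\ru,\rb\to0$ in $\Hs$ gives the weak formulation.

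\textbf{Main obstacle.} The hardest step is the oscillation error together with the magnetic coupling. The building blocks must cancel both stresses simultaneously, while interactions between the velocity and magnetic families must stay high-frequency and small in $\Hs$ even though the amplitudes grow. We would first try the geometric lemma of \cite{bbv20} with disjoint direction sets, and a quantitative choice $\delta_{q+1}\le\lambda_{q+1}^{\beta}$ with $0<\beta<\min(\theta,s-\max\alpha_i-1/2)$. This would close both the Besov convergence and the stress decay.
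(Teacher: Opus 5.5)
\textbf{The gap is in your passing-to-the-limit step.} The bound $\|P_{\tau_1}u\otimes P_{\tau_2}u\|_{\Hs}\lesssim\max(\tau_1,\tau_2)^{-s+3/2}\|P_{\tau_1}u\|_{L^2}\|P_{\tau_2}u\|_{L^2}$ holds only for well-separated frequencies $\tau_1\ll\tau_2$, where the product lives at frequency $\sim\tau_2$.

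For comparable frequencies, and in particular for the self-interaction $w_q\otimes w_q$, the product has a zero mode. The trace of $P_\tau u\otimes P_\tau u$ has mean $\fint|P_\tau u|^2$. The paper deliberately uses the inhomogeneous $\Hs$, so the zero mode counts, and therefore $\|P_\tau u\otimes P_\tau u\|_{\Hs}\gtrsim\|P_\tau u\|_{L^2}^2$. Hence \eqref{e:u*u} forces $\sum_\tau\|P_\tau u\|_{L^2}^2<\infty$, that is, $u,B\in L^2$ with $\sum_q\|w_q\|_{L^2}^2<\infty$. This contradicts your choice $\|w_{q+1}\|_{L^2}\sim\delta_{q+1}^{1/2}$ with $\delta_{q+1}$ allowed to grow like $\lambda_{q+1}^{\beta}$. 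Also, if the stresses are bounded by that same growing $\delta_{q+1}$, the claim that $\ru,\rb\to0$ in $\Hs$ does not follow.

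Structurally, the low-frequency part of $w_{q+1}\otimes w_{q+1}$ is by design $\sum_\xi\frac12a_{(\xi)}^2\fint W_{(\xi)}\otimes W_{(\xi)}$. This is essentially $\varrho_u\Id-\ru-\mathring G^B$ plus the magnetic family, and no power of the frequency damps it. The paper isolates exactly this term in item~\ref{item:prop-w-paraproduct} of Theorem~\ref{iteration}. There it requires $\|w_{q+1}\otimes w_{q+1}\|_{\Hs}$ and its $w\otimes d$, $d\otimes d$ analogues to be $\lesssim\delta_{q+1}$, with $\delta_q=\lambda_1^{3\gamma}\lambda_q^{-2\gamma}$ decreasing and summable. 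It does this by splitting $w^{(p)}_{q+1}\otimes w^{(p)}_{q+1}$ into this mean-field piece plus high-frequency pieces handled as in Lemma~\ref{lemma:ru}. Your proposal has no counterpart of this step.

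\textbf{You cannot close this step with $\Hs$ bounds on the stress alone.} Since $\sum_{\xi\in\Lambda_u}a_{(\xi)}^2=6\varrho_u\ge 6\varepsilon_u^{-1}|\ru+\mathring G^B|$ pointwise, the zero mode of the mean-field piece is $\gtrsim\|\ru+\mathring G^B\|_{L^1}$. So what must be summable is the $L^1$ norm of the stresses.

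The paper's argument for $\|a_{(\xi)}^2\|_{\Hs}\lesssim\delta_{q+1}$ in Lemma~\ref{Lem-mae-S1} does not supply this. It inserts the pointwise bound $a_{(\xi)}^2\lesssim\delta_{q+1}+|\rb|$ into a pairing with a sign-changing $f$ and replaces $|\rb|$ by $\rb$; testing with $f\equiv1$ shows this fails. Moreover, the induction carries only $L^\infty$, $C^N$ and $\Hs$ bounds on the stresses. For example, the dissipative error $\mathcal{R}^u(\nu_1(-\Delta)^{\alpha_1}w_{q+1})$ at frequency $\sigma$ is small only in negative norms.

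With this definition of weak solution, the obstruction is not merely technical.
\begin{enumerate}
\item Once $u,B\in L^2$, the paraproducts coincide with the $L^1$ products.
\item Then $\nu_1(-\Delta)^{\alpha_1}u=-\mathbb{P}\div(u\otimes u-B\otimes B)\in H^{-5/2-}$, and likewise for $B-B_*$.
\item For $\alpha_1,\alpha_2>5/4$ this gives positive regularity, and a bootstrap makes $(u,B)$ smooth.
\item The energy identity $\nu_1\|u\|_{\dot H^{\alpha_1}}^2+\nu_2\|B\|_{\dot H^{\alpha_2}}^2=0$ then leaves only $(0,B_*)$.
\end{enumerate}
So no scheme can prove the statement for all $\alpha_i>0$ with this notion of solution. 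The definition has to be weakened (compare the $\dot H^{-s}$ used in \cite{ABGM25,CH26}), and even then the low-frequency part of $w_q\otimes w_q$ needs explicit control.

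\textbf{Secondary points.}
\begin{itemize}
\item Your claims that the perturbations do not alter the lowest Fourier modes and are essentially frequency-localized need an actual mechanism. The paper uses the $P_{\le\tau}$ localization correctors together with the oscillation $\cos(2\pi\sigma_\xi\xi\cdot x)$ and Lemma~\ref{lemma:arithmetic-sigma} to place $\widehat w_{q+1}$ exactly in an annulus.
\item For $q=\infty$, intermittency enlarges $\|w_{q+1}\|_{L^\infty}$ rather than shrinking it. The concentration must therefore be mild relative to the oscillation frequency; the paper takes $r=\lambda_{q+1}^{-10\varepsilon}$ against $\sigma=\lambda_{q+1}^5$.
\end{itemize}
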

\begin{remark}
Following the proof of  Sections 5 in \cite{CH26}, for $\alpha_1,\alpha_2 > 1/2$ 
we can also get two different mild solutions to the non-stationary MHD equations. 
\end{remark}

\subsection{Organization.}
Section 2 presents the iteration theorem.
Sections 3 and 4 construct perturbations using the convex integration method and prove the iteration theorem. 
Section 5 demonstrates the proof of the main theorem.
\subsection{Notation.}
In this part, we provide an explanation for some of the main notations used throughout this paper:
\begin{enumerate}[label=\normalfont(\roman*)]
	\item  $\mathbb{P}$ denotes the Helmholtz-Leray projector, i.e., $\mathbb{P}=\Id-\nabla\Delta^{-1}\div$.
	\item  $X\lesssim Y$ means that $X\leq C Y$ for some constant $C>0$.
	\item  $u\otimes v:=(u_iv_j)_{1\leq i,j\leq 3},$ for smooth $u,v$ and is defined as in \eqref{e:u*v} for singular ones.
	\item  ${P}_{> 0}f := f- \aint_{\mathbb{T}^3}f dx $ denotes projection onto nonzero Fourier modes.
\end{enumerate}

\section{Iterative procedure}
{\noindent\bf $\bullet$ Main Iteration.}
We consider the approximate solutions to the following incompressible stationary MHD-Reynolds system
\begin{equation}\label{R_MHDs}
	\left\{\aligned
    & \div ( u_{q} \otimes u_{q}  - B_{q} \otimes B_{q} ) + \nu_1 (-\Delta)^{\alpha_1 } u_{q} +\nabla p_{q} = \div \mathring{R}_{q}^u \\
     &\div ( B_{q} \otimes u_{q}  - u_{q} \otimes B_{q} )  + \nu_2 (-\Delta)^{\alpha_2 } B_{q} = \div \mathring{R}_{q}^B\\
     &\div u_{q} =  \div B_{q} = 0,\\
     &{\hat{u}}_q (0) = 0, \quad {\hat{B}}_q (0) = B_{*}.
	\endaligned
    \right.
\end{equation}
where $q \in \mathbb{N}$, $ \mathring{R}_{q}^u$ is the Reynolds stress which is a trace-free symmetric $3\times3$ matrix,
and $ \mathring{R}_{q}^B$ is the magnetic Reynolds stress which is a skew-symmetric $3\times3$ matrix.

In the construction of the perturbation, the frequency parameter $\lambda_q$ and the amplitude parameter $\delta_{q}$ are chosen in the following way
\begin{equation}\label{la}
	\la=a^{(b^q)}, \ \
	\delta_{q}= \lbb_1^{3\gamma} \lambda_{q}^{-2\gamma}.
\end{equation}
For given $\theta$, we choose $\delta \ll \frac{\theta}{3} $, then we choose $\varepsilon$ and $b$ satisfies
\begin{align} \label{epsilon_condition}
\varepsilon \ll \delta \ll \theta \ll 1, \quad b\varepsilon \in \mathbb{N}, 
\end{align}
where $b$ is a large integer.  
Moreover we will select appropriate parameters $\gamma$ that is a very small regularity parameter 
such that it satisfies $\gamma b^2 \ll 2 b\gamma \le \varepsilon$.
Finally we determine 
$a\in \mathbb{N}$ that is a large dyadic integer. 

For smooth approximate solutions $(u_q,B_q,\mathring{R}^{u}_q,\mathring{R}^{B}_q)$, we assume the following iterative estimates:
\begin{align}
	& \|(u_q, B_q)\|_{L^\infty} \lesssim \lambda_{q}^{2}, \label{mitr1_} \\
	& \|(u_q, B_q)\|_{C^N} \lesssim \lambda_q^{6N}, \quad 1 \le N \le 7, \label{mitr2_} \\
    & \|(u_q, B_q)\|_{B^0_{\infty,1}} \lesssim \lambda_q^{6},  \label{mitr6_} \\
	& \|(\mathring{R}^{u}_q, \mathring{R}^{B}_q)\|_{L^\infty}\lesssim  \lambda_{q}^{12\alpha},\label{mitr3_}	\\
    & \|(\mathring{R}^{u}_q, \mathring{R}^{B}_q)\|_{C^N} \lesssim \lambda_q^{12N\alpha}, \quad 1 \le N \le 7,  \label{mitr4_}\\
	& \|(\mathring{R}^{u}_q, \mathring{R}^{B}_q)\|_{\Hs} \leq  \delta_{q+1}, \label{mitr5_}
\end{align}
where $\alpha = \max\{\alpha_1,\alpha_2 ,1\}$.
\begin{theorem}[Main iteration]
    \label{iteration}
	Let $0 < \theta < 1$, $s > 2 \alpha +3/2$ and  $N_q$ be a dyadic integer.
    Given a smooth solution $(u_q,B_q,\mathring{R}^{u}_q,\mathring{R}^{B}_q)$ to \eqref{R_MHDs} with $\supp {\hat{u}}_q \subset B(0,N_q)$ and 
	 $\supp {\hat{B}}_q \subset B(0,N_q),$ 
	there exists another solution $(u_{q+1},B_{q+1},\mathring{R}^{u}_{q+1},\mathring{R}^{B}_{q+1})$ such that 
    \begin{align}
	& \|(u_{q+1}, B_{q+1})\|_{L^\infty} \lesssim \lambda_{q+1}^{2}, \label{mitr1} \\
	& \|(u_{q+1}, B_{q+1})\|_{C^N} \lesssim \lambda_{q+1}^{6N}, \quad 1 \le N \le 7, \label{mitr2} \\
    & \|(u_{q+1}, B_{q+1})\|_{B^0_{\infty,1}} \lesssim \lambda_{q+1}^{6},  \label{mitr6} \\
	& \|(\mathring{R}^u_{q+1}, \mathring{R}^B_{q+1})\|_{L^{\infty}}\lesssim  \lambda_{q+1}^{12 \alpha },\label{mitr3}	\\
    & \|(\mathring{R}^{u}_{q+1}, \mathring{R}^{B}_{q+1})\|_{C^N} \lesssim \lambda_{q+1}^{12N\alpha}, \quad 1 \le N \le 7,  \label{mitr4}\\
	& \|(\mathring{R}^u_{q+1}, \mathring{R}^B_{q+1})\|_{\Hs} \leq  \delta_{q+2}. \label{mitr5}
\end{align}
    Moreover, the velocity perturbation $w_{q+1}:=u_{q+1}-u_q$  and magnetic perturbation $d_{q+1}:=B_{q+1}-B_q$ satisfy
    \begin{enumerate}[label=\normalfont(\roman*)]
        \item \label{item:prop-hatw-supp}
         There exists a dyadic number $N_{q+1}$ with $N_{q+1} \gg N_{q}$ so that
        \begin{align}
        \supp \widehat{w}_{q+1} \subset \left\{ k \in \mathbb{Z}^3: \frac{1}{2} N_{q+1} < |k| < \frac{9}{10}  N_{q+1} \right\}, \\
        \supp \widehat{d}_{q+1} \subset \left\{ k \in \mathbb{Z}^3: \frac{1}{2} N_{q+1} < |k| < \frac{9}{10}  N_{q+1} \right\}
        \end{align} 
        and
        \begin{align}
            \supp {\hat{u}}_{q+1} \subset B(0,N_{q+1}),\\
            \supp {\hat{B}}_{q+1} \subset B(0,N_{q+1});
        \end{align}

        \item \label{item:prop-w-Lp}
        The $B^{-\theta}_{\infty,1}$-estimate holds
        \begin{align}
            \|w_{q+1}\|_{B^{-\theta}_{\infty,1}} + \|d_{q+1}\|_{B^{-\theta}_{\infty,1}} < \delta_{q+2}; 
         \end{align} 

        \item \label{item:prop-w-paraproduct}
        The paraproducts have estimates  
        \begin{align}
\|w_{q+1} \otimes w_{q+1}\|_{H^{-s}} &+ \sum_{M \le 2N_q} \Big(\|P_M u_q \otimes w_{q+1}\|_{H^{-s}}  + \|w_{q+1} \otimes P_M u_q\|_{H^{-s}} \Big)\notag\\
 &\lesssim \delta_{q+1},\\
\|w_{q+1} \otimes d_{q+1}\|_{H^{-s}} &+ \sum_{M \le 2N_q} \Big(\|P_M u_q \otimes d_{q+1}\|_{H^{-s}} + \|w_{q+1} \otimes P_M B_q\|_{H^{-s}} \Big) \notag\\
+\|d_{q+1} \otimes w_{q+1}\|_{H^{-s}} &+ \sum_{M \le 2N_q} \Big(\|P_M B_q \otimes w_{q+1}\|_{H^{-s}} + \|d_{q+1} \otimes P_M u_q\|_{H^{-s}} \Big) \notag\\
 &\lesssim  \delta_{q+1}, \\
\|d_{q+1} \otimes d_{q+1}\|_{H^{-s}} &+ \sum_{M \le 2N_q} \Big(\|P_M B_q \otimes d_{q+1}\|_{H^{-s}}+ \|d_{q+1} \otimes P_M B_q\|_{H^{-s}} \Big) \notag\\
 &\lesssim  \delta_{q+1}.
\end{align}

    \end{enumerate}
\end{theorem}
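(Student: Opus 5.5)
We follow the Fourier-localized convex integration scheme of \cite{CH26}, adapted to the two coupled stresses of the MHD system (in the spirit of \cite{bbv20}). The stresses are first regularized by the projection $P_{\le \ell^{-1}}$, so that $(\mathring R^u_q,\mathring R^B_q)$ is replaced by a stress with compact Fourier support. The error this creates is measured in $\Hs$ using \eqref{mitr4_}. The price is a mollification error of size $\ell^{7}\lambda_q^{84\alpha}$, which we make smaller than $\delta_{q+2}$ by taking $\ell$ a suitable negative power of $\lambda_q$.

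\textbf{Construction of the perturbation.} We build $d_{q+1}$ first and $w_{q+1}$ second, as in the usual MHD schemes. The magnetic perturbation uses pairs of plane waves $\phi_k e^{iN_{q+1}k\cdot x}$ with directions $k$ on the unit sphere. Their amplitudes come from a geometric lemma for antisymmetric matrices: we decompose
\[
-\mathring R^B_{q,\ell}=\sum_k a_k^2\,(\xi_k\otimes\eta_k-\eta_k\otimes\xi_k),
\]
which cancels the magnetic stress through the low-frequency part of $d\otimes w-w\otimes d$. The velocity perturbation then cancels
\[
\mathring R^u_{q,\ell}+(\text{new } d\otimes d \text{ contribution}),
\]
using the symmetric geometric lemma after adding a multiple of the identity. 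The amplitudes are of size $\|\mathring R\|^{1/2}$, which by \eqref{mitr3_} is at most $\lambda_q^{6\alpha}$. Because $\ell^{-1}\ll N_{q+1}$, the amplitudes are smooth on the scale $\ell$ and the perturbations have Fourier support in the annulus of \ref{item:prop-hatw-supp}. We take $N_{q+1}\approx\lambda_{q+1}$ dyadic, and apply $\mathbb P$ (or add curl correctors) to keep the fields divergence-free and the mean of $B$ equal to $B_*$. With $N_{q+1}\approx\lambda_{q+1}$, the estimates \eqref{mitr1}, \eqref{mitr2} and \eqref{mitr6} then follow from the amplitude sizes and from Bernstein's inequality.

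\textbf{Key bounds.} The $B^{-\theta}_{\infty,1}$ bound in \ref{item:prop-w-Lp} follows because the perturbations sit at the single frequency $N_{q+1}$. This gives
\[
\|w_{q+1}\|_{B^{-\theta}_{\infty,1}}\lesssim N_{q+1}^{-\theta}\lambda_q^{6\alpha},
\]
which is smaller than $\delta_{q+2}=\lambda_1^{3\gamma}\lambda_{q+2}^{-2\gamma}$ since $\gamma b^2\ll\varepsilon\ll\theta$ and $b$ is large. For the paraproducts in \ref{item:prop-w-paraproduct} we use the smoothing of $\Hs$ with $s>2\alpha+3/2$:
\[
\|f\|_{\Hs}\lesssim\|f\|_{L^2}.
\]
Hence
\[
\|w\otimes w\|_{\Hs}\lesssim\|w\|_{L^4}^2\lesssim\|\mathring R\|_{L^2},
\]
and the low-frequency part of $w\otimes w$ reproduces $\mathring R_{q,\ell}$ itself. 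Its $\Hs$-norm is at most $\delta_{q+1}$ by \eqref{mitr5_}, which gives the claimed $\lesssim\delta_{q+1}$. In this step we must use a localized form in which the amplitude is built from $\|\mathring R\|_{\Hs}$-controlled pieces, or argue in Fourier space. The cross terms $P_Mu_q\otimes w_{q+1}$ live at frequency $\approx N_{q+1}$, so they gain $N_{q+1}^{-s}\lambda_q^{2+6\alpha}\ll\delta_{q+1}$.

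\textbf{New stresses.} We define $\mathring R^u_{q+1}$ and $\mathring R^B_{q+1}$ through the inverse divergence applied to the following errors: the linear dissipation terms $(-\Delta)^{\alpha_i}w$ and $(-\Delta)^{\alpha_i}d$; the transport and stretching terms with $u_q,B_q$; the high-frequency oscillation terms; and the mollification error. Each term except the oscillation term carries a high frequency $N_{q+1}$, and measuring in $\Hs$ gains $N_{q+1}^{-s}$. For the dissipation terms this beats the loss $N_{q+1}^{2\alpha-1}$, since $s>2\alpha+3/2$, so the result is $\ll\delta_{q+2}$. The oscillation error has frequency $\approx 2N_{q+1}k$ and gains $N_{q+1}^{-s-1}$ after the inverse divergence. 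The $L^\infty$ and $C^N$ bounds \eqref{mitr3} and \eqref{mitr4} are crude: the $\lambda_{q+1}^{12\alpha}$ budget dominates $N_{q+1}^{2\alpha}\lambda_q^{O(1)}$.

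\textbf{Main obstacle.} I expect the hardest step to be the bound $\lesssim\delta_{q+1}$ on $w\otimes w$ in $\Hs$. Our hypothesis only controls $\mathring R_q$ in $\Hs$, while its $L^\infty$ size $\lambda_q^{12\alpha}$ is huge. The resonant low-frequency part equals the regularized stress, which is fine. The remainder, however, must be shown negligible without relying on an $L^2$-size of the amplitudes. I would first try to exploit exact Fourier cancellation, writing
\[
w\otimes w=-\mathring R_{q,\ell}+(\text{frequency }\gtrsim N_{q+1}\text{ terms}),
\]
so that only the high-frequency part needs the crude bound. That part gains $N_{q+1}^{-s}$, which beats the polynomial $\lambda_q$ losses.
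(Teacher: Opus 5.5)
\textbf{Gap: the self-interaction bounds in item (iii).} This is the step you flagged, and the identity you propose to use there is false.

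\emph{The resonant part is not $-\mathring R_{q,\ell}$.} The resonant part of $w_{q+1}\otimes w_{q+1}$ is $\sum_k a_k^2\,\xi_k\otimes\xi_k=\varrho\,\Id-\mathring R^u_{q,\ell}-(\text{magnetic correction})$. In the MHD case, $w\otimes w$, $d\otimes d$ and the symmetric part of $w\otimes d$ also contain terms such as $\sum_{\xi\in\Lambda_B}a_{(\xi)}^2\,\xi_2\otimes\xi_2$. These are nonlinear in $\mathring R^B_q$ and reproduce nothing.

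\emph{The $\varrho\,\Id$ term cannot be removed.} Pointwise, $w\otimes w$ is positive semidefinite, and the geometric lemma forces $\varrho\ge\varepsilon_u^{-1}|\mathring R^u_{q,\ell}|$. Hence the zero mode of the trace satisfies $\fint|w|^2\gtrsim\fint\varrho\gtrsim\|\mathring R^u_q\|_{L^1}$. The paper says it intends the inhomogeneous $\Hs$ (weights $\langle k\rangle^{-2s}$). In that norm $\|w\otimes w\|_{\Hs}\gtrsim\fint|w|^2$, so your bound would require $\|\mathring R_q\|_{L^1}\lesssim\delta_{q+1}$. The hypotheses \eqref{mitr3_}--\eqref{mitr5_} do not give this: a unit-amplitude stress oscillating at frequency $\approx\delta_{q+1}^{-1/s}$ satisfies all of them.

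\emph{Your proposed remedies do not help.} Building amplitudes from $\Hs$-controlled pieces fails because each $a_k^2$ is a positive nonlinear function of the stress. The chain $\|w\otimes w\|_{\Hs}\lesssim\|\mathring R\|_{L^2}$ only gives $\lambda_q^{12\alpha}$. The stress bound \eqref{mitr5} itself is unaffected, since there $\varrho\,\Id$ is absorbed into the pressure.

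\textbf{How the paper handles this step, and why that does not rescue it.} The paper splits $w^{(p)}_{q+1}\otimes w^{(p)}_{q+1}$ as in Lemma \ref{lemma:ru}. It controls the resonant term $\frac12\sum_\xi a_{(\xi)}^2\fint W_{(\xi)}\otimes W_{(\xi)}$ through $\|a_{(\xi)}^2\|_{\Hs}\lesssim\delta_{q+1}$ from Lemmas \ref{Lem-mae-S1}--\ref{Lem-vae-S1}. The proof given there passes the pointwise bound $a_{(\xi)}^2\lesssim\delta_{q+1}+|\mathring R_q|$ through the pairing with sign-changing $f\in H^s$, then replaces $|\mathring R_q|$ by $\mathring R_q$. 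That is the same unjustified step.

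\textbf{With the inhomogeneous norm, (iii) cannot hold under the stated hypotheses.} Take $B_q\equiv B_*$ and a single shear $u_q=c\sin(Kx_1)e_2$. Choose $c$ so that $\mathring R^u_q$ is a unit-amplitude multiple of $\cos(Kx_1)(e_1\otimes e_2+e_2\otimes e_1)$. Choose $K$, a small power of $\lambda_q$, so that \eqref{mitr1_}--\eqref{mitr5_} hold while $\|\mathring R^u_q\|_{\Hs}\gg\delta_{q+2}$. Since $w_{q+1}$ and $d_{q+1}$ live at frequencies $\approx N_{q+1}\gg K$, only $w\otimes w-d\otimes d$ can cancel the non-gradient mode of $\div\mathring R^u_q$ at $Ke_1$. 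Therefore $\fint(|w|^2+|d|^2)\gtrsim1$, and so $\|w\otimes w\|_{\Hs}+\|d\otimes d\|_{\Hs}\gtrsim1$.

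\textbf{What can work.} Measure the products in $\dot H^{-s}$, as \cite{CH26} do, so that constants are invisible. Take $\varrho_u,\varrho_B$ to be large constants, $\gg\|\mathring R_q\|_{L^\infty}^2/\delta_{q+1}$, and Taylor-expand $\gamma_{(\xi)}^2$. Each $a_{(\xi)}^2$ is then a constant, plus a term linear in $\mathring R_q$ controlled by \eqref{mitr5_}, plus an $L^\infty$ remainder $\lesssim\delta_{q+1}$.

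\textbf{A smaller point on (i).} Your amplitudes are nonlinear functions of $\mathring R_{q,\ell}$, so they are not band-limited. Item (i) therefore also needs a frequency truncation of the $a_k$, as in the paper's $P_{\le\tau}$ corrector $w^{(l)}_{q+1}$ with Lemma \ref{lemma:arithmetic-sigma}, and the resulting error must go into the stress.
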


\section{Perturbations}
\subsection{Geometric Lemma}
The following two lemmas are an important part of designing the perturbations. 
\begin{lemma}[\cite{bbv20}\textit{Lemma 4.1}]
\label{geometric lem 1}
There exists a finite set $\Lambda_B \subset S^2 \cap \mathbb{Q}^3$ that consists of vectors $\xi$ with associated orthonormal bases $(\xi, \xi_1, \xi_2) \subset S^2 \cap \mathbb{Q}^3$,  $\varepsilon_B > 0$, 
and smooth positive functions $\gamma_{(\xi)}: B_{skew3}(0,\varepsilon_B) \to \mathbb{R}$, where $B_{skew3}(0,\varepsilon_B)$ is the ball of radius $\varepsilon_B$ centered at 0 in the space of $3 \times 3$ skew-symmetric matrices, such that for  $A \in B_{skew3}(0,\varepsilon_B)$ we have the following identity: 
\begin{equation}
\label{antisym}
A = \sum_{\xi \in \Lambda_B} \gamma_{(\xi)}^2(A) (\xi_1 \otimes \xi_2 - \xi_2 \otimes \xi_1)  \,.
\end{equation}
\end{lemma}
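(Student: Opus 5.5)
We prove Lemma \ref{geometric lem 1} by reducing it, through the Hodge star, to the classical statement that a vector near the origin is a positive combination of finitely many fixed rational unit vectors with smooth positive coefficients. The map $\omega \mapsto A_\omega$, $A_\omega v := \omega \times v$, identifies $\mathbb{R}^3$ isometrically (up to a factor $\sqrt 2$) with the $3\times 3$ skew-symmetric matrices. For a positively oriented orthonormal triple $(\xi,\xi_1,\xi_2)$ we have $\xi_1 \times \xi_2 = \xi$. A direct check on basis vectors gives $\xi_1 \otimes \xi_2 - \xi_2 \otimes \xi_1 = \mp A_{\xi}$, with the sign fixed by the convention $(a\otimes b)_{ij} = a_i b_j$. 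After relabelling $\xi_1 \leftrightarrow \xi_2$ if necessary, we may assume this equals $A_\xi$. Identity \eqref{antisym} then reads $\omega = \sum_{\xi\in\Lambda_B} \gamma_{(\xi)}^2(A_\omega)\, \xi$ for all $|\omega|$ small.

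The first step is to choose $\Lambda_B$. Take six rational unit vectors $\pm e_1,\pm e_2,\pm e_3$, or more generally any finite set of rational unit vectors whose positive cone is all of $\mathbb{R}^3$. Each $\xi$ in this set needs an orthonormal completion $(\xi_1,\xi_2)$ in $S^2\cap\mathbb{Q}^3$. For the coordinate vectors this is immediate: for $\xi = \pm e_1$ take $\{e_2,e_3\}$, ordered so that the triple is positively oriented. If one wants non-axis directions, one uses the standard fact that rational points are dense on $S^2$ and that $\xi^\perp$ admits a rational orthonormal basis. Rational rotation matrices can be produced from Pythagorean quadruples via the Cayley transform, which maps $e_1,e_2,e_3$ to rational orthonormal frames.

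The second step is to produce the coefficients. With $\Lambda_B = \{\pm e_1,\pm e_2,\pm e_3\}$, set
\[
\gamma_{(\pm e_i)}^2(A_\omega) := \tfrac12\left(c \pm \omega_i\right)
\]
for a fixed constant $c>0$. Then
\[
\sum_{\xi} \gamma_{(\xi)}^2\, \xi = \sum_i \left(\tfrac12(c+\omega_i) - \tfrac12(c-\omega_i)\right) e_i = \omega .
\]
For $|\omega| < \varepsilon_B := c/2$ each quantity $\tfrac12(c\pm\omega_i)$ is bounded below by $c/4$. Hence its square root $\gamma_{(\xi)}$ is a smooth, positive function of $A$ on $B_{skew3}(0,\varepsilon_B)$. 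The factor $\sqrt2$ in the norm identification only rescales $\varepsilon_B$.

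The only delicate point is the sign and orientation bookkeeping. The frame $(\xi,\xi_1,\xi_2)$ attached to $-e_i$ must be ordered so that $\xi_1\otimes\xi_2-\xi_2\otimes\xi_1$ equals $A_{-e_i} = -A_{e_i}$ rather than $+A_{e_i}$, and this is handled by choosing the orientation of each frame. Everything else is elementary, and the lemma is exactly as stated in \cite{bbv20}.
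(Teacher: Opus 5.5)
Your proof is correct and complete for the lemma as stated. With $A_\omega v=\omega\times v$ one has $\xi_1\otimes\xi_2-\xi_2\otimes\xi_1=-A_{\xi_1\times\xi_2}$. So \eqref{antisym} reduces to writing $\omega$ as a positive combination of the vectors $-\xi_1\times\xi_2=\pm\xi$, and $\gamma_{(\pm e_i)}^2=\tfrac12(c\pm\omega_i)$ does this with smooth coefficients bounded below on a small ball. The paper gives no argument of its own here; it just imports Lemma 4.1 of \cite{bbv20}. Your construction is therefore a self-contained, explicit version of the standard elementary proof, not a competing route.

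Two remarks.

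First, with your normalization $\xi_1\otimes\xi_2-\xi_2\otimes\xi_1=A_\xi$, the frames must satisfy $\xi_1\times\xi_2=-\xi$, e.g.\ $(e_1;e_3,e_2)$ and $(-e_1;e_2,e_3)$. So ``ordered so that the triple is positively oriented'' contradicts your own convention. This is harmless: $\{\pm e_i\}$ is symmetric, so any consistent choice works.

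Second, and more relevant to how $\Lambda_B$ is used later, the paper applies the product bound \eqref{intersect-phik1} to every pair $\xi\neq\xi'$, but that bound needs $\xi,\xi'$ to be non-parallel. For $\xi'=-\xi$ one has $\phi_{(\xi)}\phi_{(-\xi)}=g(\lambda r N_\Lambda\xi\cdot x)$ with $g(y)=\phi_r(y)\phi_r(-y)$. Its $L^1$ norm is a fixed multiple of $\int|\phi(z)\phi(-z)|\,dz$, which is generally of order one rather than $O(r)$.

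Your set still works downstream. If the same pair $\{\xi_1,\xi_2\}$ is used in swapped order for $\pm\xi$, every $(\xi,-\xi)$ cross term in $\mathring{R}^u_{\mathrm{osc},3}$ and $\mathring{R}^B_{\mathrm{osc},3}$ cancels identically. Alternatively, you can avoid antipodal pairs altogether by taking $\Lambda_B=\{e_1,e_2,e_3,\xi_4\}$ with $\xi_4=\tfrac13(2,2,1)$. Use the frames $(e_1;e_3,e_2)$, $(e_2;e_1,e_3)$, $(e_3;e_2,e_1)$, $(\xi_4;\tfrac13(1,-2,2),\tfrac13(2,-1,-2))$, and set the coefficients $\gamma^2$ equal to $\omega_1+\tfrac23 c$, $\omega_2+\tfrac23 c$, $\omega_3+\tfrac13 c$, $c$. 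These are all positive for $|\omega|<c/3$.
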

\begin{lemma}[\cite{bbv20}\textit{Lemma 4.2}]
\label{geometric lem 2}
There exists a finite set $\Lambda_u \subset (S^2 \cap \mathbb{Q}^3)\setminus\Lambda_B$ that consists of vectors $\xi$ with associated orthonormal bases $(\xi, \xi_1, \xi_2) \subset S^2 \cap \mathbb{Q}^3$,  $\varepsilon_u> 0$, 
and smooth positive functions $\gamma_{(\xi)}: B_{sym3}(\Id,\varepsilon_u) \to \mathbb{R}$, where $B_{sym3}(\Id,\varepsilon_u)$ is the ball of radius $\varepsilon_u$ centered at the identity in the space of $3 \times 3$ symmetric matrices,  such that for  $S \in B_{sym3}(\Id,\varepsilon_u)$ we have the following identity:
\begin{equation}
\label{sym}
S = \sum_{\xi \in \Lambda_u} \gamma_{(\xi)}^2(S) \xi_1 \otimes \xi_1   \,.
\end{equation} 
\end{lemma}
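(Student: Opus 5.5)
The plan is to reduce the identity \eqref{sym} to linear algebra in the $6$-dimensional space of symmetric $3\times 3$ matrices. The first requirement is a finite family of rational unit vectors $\{\xi_1^{(k)}\}$ such that two things hold. First, the rank-one matrices $\xi_1^{(k)}\otimes\xi_1^{(k)}$ should span the space of symmetric matrices. Second, $\Id$ should lie in the open positive cone they generate, i.e.
\[
\Id=\sum_k c_k\,\xi_1^{(k)}\otimes\xi_1^{(k)}, \qquad c_k>0 .
\]

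A concrete choice is the orbit of $(3/5,4/5,0)$ under coordinate permutations and sign changes.
\begin{itemize}
\item Summing $v\otimes v$ over this orbit gives a matrix invariant under the hyperoctahedral group, hence a positive multiple of $\Id$. So the second requirement holds with all $c_k$ equal and positive.
\item For spanning, it suffices to exhibit the basis directly. Pairs $v=(3/5,\pm 4/5,0)$ produce the off-diagonal entry $e_1\odot e_2$ by subtraction, and similarly for the other coordinate pairs. Combinations of the diagonal parts of the different permutations produce $e_i\otimes e_i$.
\end{itemize}

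Next comes the construction of the coefficients. Since the family spans, I would fix a linear right inverse $L$: a linear map sending a symmetric $E$ to coefficients $(a_k(E))_k$ with $E=\sum_k a_k(E)\,\xi_1^{(k)}\otimes\xi_1^{(k)}$. For $S\in B_{sym3}(\Id,\varepsilon_u)$, write $S=\Id+(S-\Id)$ and set
\[
\gamma_{(\xi)}^2(S):=c_k+a_k(S-\Id).
\]
This coefficient depends affinely on $S$. Choosing $\varepsilon_u<\min_k c_k/\|L\|$ keeps it bounded below by a positive constant, so $\gamma_{(\xi)}=\sqrt{c_k+a_k(S-\Id)}$ is smooth and positive on the ball.

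Each $\xi_1^{(k)}$ must then be completed to an orthonormal basis $(\xi,\xi_1,\xi_2)\subset S^2\cap\mathbb{Q}^3$.
\begin{itemize}
\item Use the Householder reflection $H=\Id-2\frac{w\otimes w}{|w|^2}$ with $w=e_2-\xi_1^{(k)}$, which is rational. It is an orthogonal matrix with rational entries mapping $e_2$ to $\xi_1^{(k)}$.
\item Set $\xi:=He_1$ and $\xi_2:=He_3$. These are rational and give the required orthonormal basis.
\end{itemize}

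The only genuinely delicate point, and the one I expect to need care, is the disjointness $\Lambda_u\cap\Lambda_B=\emptyset$, together with the requirement that distinct $k$ give distinct $\xi$ (so that $\Lambda_u$ really indexes the sum). To handle this I would exploit invariance. For any rational orthogonal matrix $O$, the rotated family $O\xi_1^{(k)}$ still satisfies everything above: $\Id$ is $O$-invariant, and spanning is preserved by conjugation. The associated bases $(O\xi,O\xi_1,O\xi_2)$ remain rational and orthonormal.

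Rational orthogonal matrices are dense in $SO(3)$, for example via the Cayley transform $O=(\Id-A)(\Id+A)^{-1}$ with $A$ rational skew. The bad set of $O$ for which some $O\xi^{(k)}$ lands in the finite set $\Lambda_B$, or two of the $O\xi^{(k)}$ coincide, is a finite union of proper closed subsets of $SO(3)$. Hence a suitable rational $O$ exists, and its image family defines $\Lambda_u$. This completes the argument, in line with \cite{bbv20}.
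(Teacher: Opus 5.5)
Your first three steps are correct. They are the standard De Lellis--Sz\'ekelyhidi argument behind \cite{bbv20}; the paper itself only quotes that reference. The orbit of $(\tfrac35,\tfrac45,0)$ sums to a positive multiple of $\Id$, and the rank-one matrices span because $(\tfrac35)^2\neq(\tfrac45)^2$. The affine coefficients $c_k+a_k(S-\Id)$ stay positive on a small ball, and the Householder completion is rational and orthonormal.

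The gap is in the last step, exactly where you flagged it. A single rotation $O$ applied to the whole family cannot remove coincidences. Since $O$ is injective, $O\xi^{(k)}=O\xi^{(k')}$ holds if and only if $\xi^{(k)}=\xi^{(k')}$. So the corresponding bad set is either empty or all of $SO(3)$, never a proper closed subset.

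Coincidences do occur in your construction. Whenever $\xi_1^{(k)}$ has vanishing first coordinate, $w=e_2-\xi_1^{(k)}$ is orthogonal to $e_1$, so $H_ke_1=e_1$. Hence $(0,\tfrac35,\tfrac45)$, $(0,\tfrac45,\tfrac35)$, $(0,-\tfrac35,\tfrac45)$ and $(0,-\tfrac45,\tfrac35)$ all receive the same $\xi=e_1$, with different $\xi_1$. Then $\Lambda_u$ is not a set of vectors each carrying one associated basis, so \eqref{sym}, which is indexed by $\xi\in\Lambda_u$, cannot be written in the required form. The later construction also breaks down: it indexes $a_{(\xi)}$, $W_{(\xi)}$ and $\sigma_\xi$ by $\xi$ and uses \eqref{intersect-phik1} for $\xi\neq\xi'$.

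The fix is to use the freedom available for each $k$ separately rather than globally. Replace the completion by $(H_kRe_1,\,\xi_1^{(k)},\,H_kRe_3)$, where $R$ is a rational rotation about the $e_2$-axis (cosine $\frac{1-t^2}{1+t^2}$, sine $\frac{2t}{1+t^2}$, $t\in\mathbb{Q}$). Since $Re_2=e_2$, this leaves $\xi_1^{(k)}$ unchanged, and hence also \eqref{sym} and all the $\gamma_{(\xi)}$. Meanwhile $\xi^{(k)}=H_kRe_1$ runs through infinitely many rational points of the great circle $S^2\cap(\xi_1^{(k)})^{\perp}$.

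Choose the $\xi^{(k)}$ one at a time, avoiding the finitely many previously chosen vectors and the finite set $\Lambda_B$. If you want transversal planes later, also avoid their negatives. This yields distinct $\xi$'s outside $\Lambda_B$. It also makes the global rotation unnecessary, although that part of your argument is correct for disjointness from $\Lambda_B$ alone.
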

\begin{remark}
\label{Universal constant}
By any given choice of $\Lambda_B$ and $\Lambda_u$ and the associated orthonormal bases, there exists $N_{\Lambda} \in \mathbb{N}$ with
\begin{equation*}
\{ N_{\Lambda} \xi,N_{\Lambda}\xi_1 , N_{\Lambda}\xi_2 \ | \ \xi \in \Lambda_u \cup \Lambda_B\} \subset N_{\Lambda} \mathbb{S}^2 \cap \mathbb{Z}^3.
 \end{equation*} 
\end{remark}


\begin{remark}
For a large integer $N_{*} \gg 1$, there exists a geometric constant  $M_* > 0$ such that
\begin{align}
\sum_{\xi \in \Lambda_{u}} \|\gamma_{(\xi)}\|_{C^{N_{*}}(B_{\mathrm{sym}3}(\mathrm{Id},\varepsilon_u))} + \sum_{\xi \in \Lambda_{B}} \|\gamma_{(\xi)}\|_{C^{N_{*}}(B_{\mathrm{skew}3}(0,\varepsilon_B))} \leq M_* \,.
\label{M bound}
\end{align}
\end{remark}

\subsection{Intermittent flows}
The intermittent flows are composed of spatial building blocks, 
which are characterized by four parameters $\rs$, $\lambda$, $\tau$ and $\sigma$:
\begin{equation}\label{choose-prar}
	\rs := \lambda_{q+1}^{-10\varepsilon},\
	\lambda := \lambda_{q+1},\   \tau:=\lambda_{q+1}^{2}, \ \sigma:=\lambda_{q+1}^{5}>50\tau^{2}.
\end{equation}

Next, we mainly recall from \cite{bbv20} their constructions and properties.

Let $\phi : \mathbb{R} \to \mathbb{R}$ be a smooth cut-off function supported on
the interval $[-1,1]$
and normalize $\phi$ to satisfy
\begin{equation}\label{e4.91}
	\frac{1}{2 \pi}\int_{\mathbb{R}} \phi^2(x)\d x = 1,
\end{equation}
and 
\begin{equation}\label{e4.92}
   \frac{1}{2 \pi}\int_{\mathbb{R}} \phi(x) \d x = 0.
\end{equation}
The corresponding rescaled cut-off functions are defined by
\begin{equation*}
	\phi_{\rs}(x) := {\rs^{-\frac{1}{2}}}\phi\left(\frac{x}{\rs}\right), 
\end{equation*}
Here,  $\phi_{\rs}$ is supported on the interval $[-r,r]$.
By an abuse of notation,
we periodize $\phi_{\rs}$ so that
they are treated as periodic functions defined on $\mathbb{T}$.

The \textit{intermittent shear flows} are defined by
\begin{equation*}
	W_{(\xi)} := \phi_{\rs}( \lambda \rs N_{\Lambda}\xi\cdot x)\xi_1,\ \  \xi \in \Lambda_u \cup \Lambda_{B}  ,
\end{equation*}
and the \textit{intermittent magnetic shear flieds} are defined by
\begin{equation*}
	D_{(\xi)} := \phi_{\rs}( \lambda \rs N_{\Lambda}\xi\cdot x)\xi_2, \ \ \xi \in \Lambda_{B}  .
\end{equation*}
Here, $N_{\Lambda}$ is given in Remark \ref{Universal constant},
$(\xi,\xi_1,\xi_2)$ are the orthonormal bases in $\R^3$ in
geometric lemmas, $\Lambda_u, \Lambda_{B}$ are the wave vector sets of finite cardinality,
and the parameter $\rs$  parameterizes the concentration of the flows.
In particular,
$\{W_{(\xi)}, D_{(\xi)}\}$ are $(\mathbb{T}/(\lbb \rs))^3$-periodic, supported on thin planes with thickness $\sim {1}/{\lbb}$ in each periodic domain.

To simplify notations, we set
\begin{align}\label{snp-endpt2}
	\phi_{(\xi)}(x) := \phi_{\rs}(\lambda \rs N_{\Lambda}\xi\cdot x),  \ \
\end{align}
and rewrite
\begin{subequations}
	\begin{align}
		&	W_{(\xi)} = \phi_{(\xi)} \xi_1,\quad  \xi\in \Lambda_u\cup \Lambda_{B}, \label{snwd-endpt2}\\
		&	D_{(\xi)} = \phi_{(\xi)} \xi_2,\quad  \xi\in \Lambda_{B}.  \label{snd-endpt2}
	\end{align}
\end{subequations}
Then,
$W_{(\xi)}$ and $D_{(\xi)}$ are  mean zero on $\T^3$. Moreover, the corresponding tenser potentials are defined by
\begin{subequations} \label{corrector vector-endpt2}
	\begin{align}
		& \widetilde{W}_{(\xi)}^{c} := \xi_1 \otimes \nabla \Delta^{-1} \phi_{(\xi)} - \nabla \Delta^{-1} \phi_{(\xi)} \otimes \xi_1,\\
		& \widetilde{D}_{(\xi)}^{c} := \xi_2 \otimes \nabla \Delta^{-1} \phi_{(\xi)} - \nabla \Delta^{-1} \phi_{(\xi)} \otimes \xi_2.
	\end{align}
\end{subequations}
where $\Delta^{-1}$ is defined by the Fourier transform for mean-free functions:
\[
\widehat{\Delta^{-1} f}(k) = -\frac{1}{|k|^2} \hat{f}(k), \quad k \neq 0.
\]
Using $(\xi_i \cdot \nabla) \phi_{(\xi)} = 0,i=1,2$ and $\fint \phi_{(\xi)} = 0$, we get
\[ \div \widetilde{W}_{(\xi)}^c = \div(\nabla \Delta^{-1} \phi_{\xi}) \xi_1 - (\xi_1 \cdot \nabla) \nabla \Delta^{-1} \phi_{(\xi)} = \phi_{(\xi)} \xi_1 - 0 = W_{(\xi)}, \]
\[ \div \widetilde{D}_{(\xi)}^c = \div(\nabla \Delta^{-1} \phi_{\xi}) \xi_2 - (\xi_2 \cdot \nabla) \nabla \Delta^{-1} \phi_{(\xi)} = \phi_{(\xi)} \xi_2 - 0 = D_{(\xi)}. \]

Here, the intermittent flows $W_{(\xi)}$ and $D_{(\xi)} $ will be key ingredients 
in building the principal perturbations $ w^{(p)}_{q+1} $ and $  d^{(p)}_{q+1} $.
Moreover, their associated potentials will be employed to 
build the incompressible correctors $  w^{(c)}_{q+1} $  and $  d^{(c)}_{q+1} $. Lemma \ref{buildingblockestlemma} below contains their important analytic estimates.

\begin{lemma} [\cite{bbv20}\textit{Lemmas 5.1-5.2}] 
    \label{buildingblockestlemma}
	For any $p \in [1,\infty]$ and $N \in \mathbb{N}$, we have
	\begin{align}
		&\left\|\nabla^{N} \phi_{(\xi)}\right\|_{L^{p}}
		\lesssim r_{\perp}^{\frac 1p- \frac12}  \lambda^{N}.  \label{intermittent estimates2-endpt2}
	\end{align}
	For any $p \in (1,\infty)$ and $N \in \mathbb{N}$, we have
	\begin{align}
		&\displaystyle \|\nabla^{N}  W_{(\xi)}\|_{L^{p}}
		+\lambda \|\nabla^{N}  \tilde{W}_{(\xi)}^c\|_{L^{p}}\lesssim r_{\perp}^{\frac 1p- \frac12} \lambda^{N},
         \ \ \xi\in \Lambda_u \cup \Lambda_{B}, \label{ew-endpt2} \\
        &\displaystyle \|\nabla^{N}  D_{(\xi)}\|_{ L^{p}}
         +\lambda \|\nabla^{N} \tilde{D}_{(\xi)}^c\|_{L^{p}}\lesssim r_{\perp}^{\frac 1p- \frac12} \lambda^{N},
         \ \ \xi\in \Lambda_{B}. \label{ed-endpt2}
	\end{align}
	Moreover, for every $\xi \neq \xi'\in \Lambda_{u}\cup \Lambda_{B}$, $N\in \mathbb{N}$ and $p \in [1, \infty]$,
	the following product estimate holds
	\begin{equation}  \label{intersect-phik1}
		\|\na^N (\phi_{(\xi)}\phi_{(\xi')}) \|_{L^p}\lesssim \lbb^N \rs^{\frac{2}{p}-1}.
	\end{equation}
\end{lemma}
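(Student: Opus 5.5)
The plan is a direct computation from the explicit one-dimensional structure of $\phi_{(\xi)}$; this lemma is the one quoted from \cite{bbv20}, and I would re-derive it as follows. First I record the geometry. Since $\phi_{(\xi)}(x)=\phi_{\rs}(\lambda\rs N_\Lambda \xi\cdot x)$ and $N_\Lambda\xi\in\mathbb{Z}^3$, the function $\phi_{(\xi)}$ is $\T^3$-periodic. It depends only on $\xi\cdot x$ and has period $(\lambda\rs N_\Lambda)^{-1}$ in that variable. Within each period it is supported in a slab of width $\sim \rs/(\lambda\rs N_\Lambda)\sim\lambda^{-1}$, so its support occupies a fraction $\sim\rs$ of the volume, and there it has amplitude $\sim\rs^{-1/2}$.

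\emph{Step 1: the bound \eqref{intermittent estimates2-endpt2}.} Each derivative of $\phi_{(\xi)}$ produces a factor $\lambda\rs N_\Lambda\xi$ times a derivative of $\phi_{\rs}$, and that derivative costs $\rs^{-1}$. Hence $\nabla^N\phi_{(\xi)}=(\lambda N_\Lambda)^N\xi^{\otimes N}\,(\rs^{-1/2}\phi^{(N)})(\lambda N_\Lambda\xi\cdot x\,\cdot)$, suitably rescaled. By periodicity, the $L^p(\T^3)$ norm of a function of $\xi\cdot x$ equals the $L^p(\T)$ norm of the one-variable profile. For the one-variable profile, periodic rescaling $y\mapsto \lambda\rs N_\Lambda y$ preserves $L^p(\T)$ norms, and $\|\phi_{\rs}^{(N)}\|_{L^p(\T)}\sim \rs^{-N}\rs^{-1/2+1/p}$. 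Combining these factors gives $\rs^{1/p-1/2}\lambda^N$, and the estimates on $W_{(\xi)}, D_{(\xi)}$ in \eqref{ew-endpt2}--\eqref{ed-endpt2} follow immediately, since these are $\phi_{(\xi)}$ times a constant unit vector.

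\emph{Step 2: the potentials, which carry the gain $\lambda^{-1}$.} Here I would use the mean-zero normalization \eqref{e4.92}. It gives $\phi=\psi'$ with $\psi(x)=\int_{-\infty}^x\phi$ smooth and supported in $[-1,1]$. Because $\phi_{(\xi)}$ depends only on $\xi\cdot x$ and has mean zero, $\Delta^{-1}$ reduces to a one-dimensional inverse second derivative, and $\nabla\Delta^{-1}\phi_{(\xi)}=\xi\,(\lambda\rs N_\Lambda)^{-1}F(\lambda\rs N_\Lambda\xi\cdot x)$, where $F$ is the mean-zero periodic antiderivative of $\phi_{\rs}$. Since $F=\rs\,\rs^{-1/2}\psi(\cdot/\rs)$ (minus a constant, which vanishes here as $\int\psi$ can be handled by the periodic mean correction of size $\lesssim \rs^{1/2}$, harmless), we obtain $\nabla\Delta^{-1}\phi_{(\xi)}=(\lambda N_\Lambda)^{-1}\xi\,\psi_{\rs}(\lambda\rs N_\Lambda\xi\cdot x)$ up to that constant. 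Step 1, applied with $\psi$ in place of $\phi$, then gives $\lambda\|\nabla^N\widetilde W^c_{(\xi)}\|_{L^p}\lesssim\rs^{1/p-1/2}\lambda^N$, and similarly for $\widetilde D^c_{(\xi)}$. This argument in fact works for all $p\in[1,\infty]$.

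\emph{Step 3: the product estimate \eqref{intersect-phik1}, which I expect to be the main obstacle.} Here one needs $\xi'\neq\pm\xi$; I would check from the construction in \cite{bbv20} that the sets $\Lambda_u\cup\Lambda_B$ contain no antipodal pairs. Given this, $\xi,\xi'$ are linearly independent rational vectors. The Leibniz rule reduces the derivative bound to bounding $\|\nabla^{j}\phi_{(\xi)}\,\nabla^{N-j}\phi_{(\xi')}\|_{L^p}$, with each factor pointwise of the form $\lambda^{j}\rs^{-1/2}$ times a bump. The key point is to show that the intersection of the two slab families has measure $\lesssim\rs^2$ in $\T^3$. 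To do so I would complete $N_\Lambda\xi, N_\Lambda\xi'$ to an integer matrix, change variables $x\mapsto(N_\Lambda\xi\cdot x, N_\Lambda\xi'\cdot x, \cdot)$, which is a finite-to-one covering map of $\T^3$ with bounded degree, and apply Fubini. The measure then factorizes as (fraction $\rs$)$\times$(fraction $\rs$), and the $L^p$ norm is $\lesssim \lambda^N\rs^{-1}\rs^{2/p}$. The delicate part is making this torus change of variables rigorous (the degree and Jacobian depend only on $\Lambda$, hence are absorbed into the implicit constants); the rest is bookkeeping.
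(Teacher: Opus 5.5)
Your proposal is correct. The paper gives no argument of its own: it simply imports \cite{bbv20}, Lemmas 5.1--5.2. So your self-contained derivation is the real content here. It has three ingredients:
\begin{itemize}
\item one-dimensional rescaling for $\phi_{(\xi)}$;
\item the compactly supported primitive $\psi$ of $\phi$, available by \eqref{e4.92}, which gives $\nabla\Delta^{-1}\phi_{(\xi)}=(\lambda N_\Lambda)^{-1}\xi\,(\psi_{\rs}-c_{\rs})(\lambda\rs N_\Lambda\xi\cdot x)$ with $|c_{\rs}|\lesssim\rs^{1/2}\le\rs^{1/p-1/2}$;
\item Fubini on the torus for the product estimate.
\end{itemize}

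Proving it directly makes visible things the citation hides. First, the potential bounds hold for all $p\in[1,\infty]$; the restriction $p\in(1,\infty)$ in \eqref{ew-endpt2}--\eqref{ed-endpt2} is unnecessary for these one-dimensional profiles. Second, two hypotheses are implicit in the statement:
\begin{itemize}
\item \eqref{intersect-phik1} needs $\xi,\xi'$ non-parallel. As you flag, $\xi\neq\xi'$ alone is not enough if $-\xi$ also lies in $\Lambda_u\cup\Lambda_B$, since then the product has size $\rs^{1/p-1}$.
\item $\T^3$-periodicity of $\phi_{(\xi)}$ needs $\lambda\rs N_\Lambda\xi\in\mathbb{Z}^3$, i.e.\ $\lambda\rs\in\mathbb{N}$, not merely $N_\Lambda\xi\in\mathbb{Z}^3$. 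This is guaranteed by the parameter condition $b\varepsilon\in\mathbb{N}$.
\end{itemize}
The citation's advantage is only brevity.

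The step you call delicate is in fact immediate. For linearly independent $n,n'\in\mathbb{Z}^3$, the map $x\mapsto(n\cdot x,n'\cdot x)$ is a continuous surjective homomorphism $\T^3\to\T^2$. It therefore pushes normalized Haar measure to normalized Haar measure, so $\|f(n\cdot x)\,g(n'\cdot x)\|_{L^p(\T^3)}=\|f\|_{L^p(\T)}\|g\|_{L^p(\T)}$ exactly, with no degree or Jacobian constants. Each Leibniz term $\nabla^j\phi_{(\xi)}\otimes\nabla^{N-j}\phi_{(\xi')}$ is a constant tensor times such a product, which gives $\lambda^N\rs^{2/p-1}$ directly.
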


\subsection{Amplitudes of perturbations $a_{(\xi)}$}   \label{Subsec-Amplitude-S1}
The amplitudes of the perturbations are constructed mainly to decrease the effects of $\mathring{R}^u_{q}$ and $\mathring{R}^B_{q}$ 
based on geometric lemmas.\\
{\bf $\bullet$ Amplitudes of magnetic perturbations.}
Let $\chi: [0, +\infty) \to \mathbb{R}$ be a smooth cut-off function such that
\begin{equation}\label{e4.0}
	\chi (z) =
	\left\{\aligned
	& 1,\quad 0 \leq z\leq 1, \\
	& z,\quad z \geq 2,
	\endaligned
	\right.
\end{equation}
and
\begin{equation}\label{e4.1}
	\frac 12 z \leq \chi(z) \leq 2z \quad \text{for}\quad z \in (1,2).
\end{equation}
Set
\begin{equation}\label{rhob}
	\varrho_B(x) := 2 \varepsilon_B^{-1}  \lbb_q^{-\frac{\ve_R}{4}}
      \delta_{q+ 1} \chi\left( \frac{|\mathring{R}_{q}^{B}(x) |}{\lbb_q^{-\frac{\ve_R}{4}}  \delta_{q+1} } \right),
\end{equation}
where $\varepsilon_{B}$ is the small radius in Lemma \ref{geometric lem 1},
and $\mathring{R}_{q}^{B}(x)$ is the magnetic stress.
Then, one has
\begin{equation}\label{rhor}
	\left|  \frac{\mathring{R}_{q}^{B}}{\varrho_B} \right|
	= \left| \frac{\mathring{R}_{q}^{B}}{2 \varepsilon_B^{-1} \lbb_q^{-\frac{\ve_R}{4}}  \delta_{q+ 1}\chi
		( \lbb_q^{\frac{\ve_R}{4}} \delta_{q+1} ^{-1} |\mathring{R}_{q}^{B} | )} \right| \leq \varepsilon_B,
\end{equation}

Then, the amplitudes of the magnetic perturbations are defined by
\begin{equation}\label{akb}
	a_{{(\xi)}}(x):= a_{\xi, B}(x)
	=  \sqrt{2}\varrho_B^{\frac{1}{2} } (x) \gamma_{{(\xi)}}
	\left(\frac{-\mathring{R}_{q}^{B}(x)}{\varrho_B(x)}\right), \quad \xi \in \Lambda_B,
\end{equation}
where $\gamma_{(\xi)}$ is the smooth function in  Lemma~\ref{geometric lem 1}.

\begin{lemma} \label{Lem-mae-S1}
	For $1\leq N\leq 7$, $\xi\in \Lambda_B$, denote $\ell := \lambda_q^{-30}$, we have 
	\begin{align}    \label{a-mag-S1}
		\norm{a_{(\xi)}^2}_{H^{-s}} \lesssim \delta_{q+1} ,\ \
	 \norm{ a_{(\xi)} }_{C^N} \lesssim \ell^{-N\alpha}.
	\end{align}
     Moreover, we have 
    \begin{align}\label{a-vel-S1-1}
		\norm{a_{(\xi)}}_{L^{\infty}} \lesssim \ell^{-\alpha}.
	\end{align}
\end{lemma}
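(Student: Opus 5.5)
We treat the three bounds separately. Each follows from the structure $a_{(\xi)}^2 = 2\varrho_B\,\gamma_{(\xi)}^2(-\mathring{R}_q^B/\varrho_B)$, the bound \eqref{M bound} on $\gamma_{(\xi)}$, and the inductive estimates \eqref{mitr3_}--\eqref{mitr5_}. Throughout, the room between $\ell^{-1}=\lambda_q^{30}$ and the growth rate $\lambda_q^{12\alpha}$ of the stress absorbs all small negative powers of $\delta_{q+1}$ and $\lambda_q^{-\varepsilon_R/4}$ that appear.

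\emph{$L^\infty$ bound.} By \eqref{e4.0}--\eqref{e4.1} we have $\chi(z)\le 2\max\{1,z\}$. Hence $\varrho_B \lesssim \lambda_q^{-\varepsilon_R/4}\delta_{q+1} + |\mathring{R}_q^B|$ pointwise. By \eqref{rhor} the argument of $\gamma_{(\xi)}$ lies in $B_{skew3}(0,\varepsilon_B)$, so $|\gamma_{(\xi)}|\le M_*$. Combining with \eqref{mitr3_} gives $\|a_{(\xi)}\|_{L^\infty}\lesssim \|\varrho_B\|_{L^\infty}^{1/2}\lesssim \lambda_q^{6\alpha}\le \ell^{-\alpha}$, which is \eqref{a-vel-S1-1}.

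\emph{$C^N$ bound.} Write $a_{(\xi)}=\sqrt2\,\varrho_B^{1/2}\,\gamma_{(\xi)}(-\mathring{R}_q^B/\varrho_B)$ and apply the Leibniz rule together with the Fa\`a di Bruno formula. We need three ingredients.
\begin{itemize}
\item $\varrho_B$ is a smooth function of $\mathring{R}_q^B$, so $\|\varrho_B\|_{C^N}\lesssim \sum_{k\le N}\|\mathring{R}_q^B\|_{C^1}^{k}\,\|\mathring{R}_q^B\|_{C^{N}}(\ldots)$. With \eqref{mitr4_} this gives bounds of the form $\lambda_q^{12N\alpha}$ times powers of $\lambda_q^{\varepsilon_R/4}\delta_{q+1}^{-1}$; the latter come from the rescaling inside $\chi$.
\item The lower bound $\varrho_B\ge \varepsilon_B^{-1}\lambda_q^{-\varepsilon_R/4}\delta_{q+1}$ controls the derivatives of $\varrho_B^{1/2}$ and of $\varrho_B^{-1}$. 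The negative powers $\varrho_B^{1/2-k}$ and $\varrho_B^{-1-k}$ cost at most $(\lambda_q^{\varepsilon_R/4}\delta_{q+1}^{-1})^{k}$ per order.
\item Since $\delta_{q+1}^{-1}=\lambda_1^{-3\gamma}\lambda_q^{2b\gamma}$ with $2b\gamma\le\varepsilon$, all these factors are $\lambda_q^{O(N\varepsilon)}$.
\end{itemize}
Altogether $\|a_{(\xi)}\|_{C^N}\lesssim \lambda_q^{12N\alpha+6\alpha+CN\varepsilon}\le \lambda_q^{30N\alpha}=\ell^{-N\alpha}$ for $1\le N\le 7$ and $\varepsilon$ small.

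\emph{$H^{-s}$ bound.} Since $s>3/2$, we have $\sum_k \langle k\rangle^{-2s}<\infty$, and $|\widehat{f}(k)|\le\|f\|_{L^1}$. Hence $\|f\|_{H^{-s}}\lesssim\|f\|_{L^1}$. Therefore
\[
\|a_{(\xi)}^2\|_{H^{-s}}\lesssim \|\varrho_B\|_{L^1}\lesssim \lambda_q^{-\varepsilon_R/4}\delta_{q+1}+\|\mathring{R}_q^B\|_{L^1}.
\]
The main obstacle is the last term. The inductive hypothesis \eqref{mitr5_} only controls $\mathring{R}_q^B$ in $H^{-s}$. Interpolating it against \eqref{mitr4_} gives $\|\mathring{R}_q^B\|_{L^2}\lesssim\delta_{q+1}^{1/2}\lambda_q^{C}$, which is far too weak. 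My first attempt is therefore to check whether the stress produced in the previous step actually satisfies $\|\mathring{R}_q^B\|_{L^1}\lesssim\delta_{q+1}$. This is plausible, because the new stress terms (oscillation, linear and corrector errors) are naturally estimated in $L^1$ via intermittency gains $r^{1/2}$. If so, I would carry that $L^1$ bound along as an additional inductive hypothesis alongside \eqref{mitr5_}. Failing that, I would replace $|\mathring{R}_q^B|$ in $\varrho_B$ by a low-frequency mollified version at scale $\ell$, for which $L^1$ smallness can be deduced from the $H^{-s}$ bound at the cost of a power $\ell^{-s}$, absorbed by choosing parameters appropriately.
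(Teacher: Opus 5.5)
You prove two of the three bounds. The $H^{-s}$ bound is left open, and neither of your proposed repairs closes it.

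Your $L^\infty$ and $C^N$ bounds are fine. They are a more explicit version of the paper's appeal to ``standard H\"older estimates''. You get $\|a_{(\xi)}\|_{L^\infty}\lesssim\|\varrho_B\|_{L^\infty}^{1/2}\lesssim\lambda_q^{6\alpha}$ directly, where the paper passes through $\|a_{(\xi)}\|_{C^1}$.

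Your diagnosis of the $H^{-s}$ bound is right and can be made sharp:
\begin{itemize}
\item By \eqref{antisym}, $\sum_{\xi\in\Lambda_B}a_{(\xi)}^2=2\varrho_B\sum_{\xi}\gamma_{(\xi)}^2(-\mathring{R}^B_q/\varrho_B)\ge\sqrt{2}\,|\mathring{R}^B_q|$ pointwise.
\item Since $a_{(\xi)}^2\ge 0$, its zero Fourier coefficient equals its $L^1$ norm. So for the inhomogeneous norm the paper intends, $\sum_{\xi}\|a_{(\xi)}^2\|_{H^{-s}}\gtrsim\|\mathring{R}^B_q\|_{L^1}$.
\item The claimed estimate is therefore equivalent to $\|\mathring{R}^B_q\|_{L^1}\lesssim\delta_{q+1}$. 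That is not implied by \eqref{mitr3_}--\eqref{mitr5_}: the skew, divergence-free field $\cos(\lambda_q x_1)(e_2\otimes e_3-e_3\otimes e_2)$ satisfies all of them for large $a$, yet has $L^1$ norm of order one.
\end{itemize}
The paper's proof does not get around this. It bounds $\langle a_{(\xi)}^2,f\rangle$ by $\langle 2\varepsilon_B^{-1}\lambda_q^{-\varepsilon_R/4}\delta_{q+1},f\rangle+\langle 2\varepsilon_B^{-1}\mathring{R}^B_q,f\rangle$ for all $f$ in the unit ball of $H^s$. This treats the pointwise bound $a_{(\xi)}^2\lesssim\lambda_q^{-\varepsilon_R/4}\delta_{q+1}+|\mathring{R}^B_q|$ as if it were linear in $\mathring{R}^B_q$ and survived pairing with sign-changing $f$. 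Neither is true.

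\textbf{Repair (a), an inductive $L^1$ bound, does not propagate.}
\begin{itemize}
\item By \eqref{e4.91}, $P_{>0}(\phi_{(\xi)}^2)=-1$ off the $O(r)$-measure supports of the $\phi_{(\xi)}$. There, $\mathring{R}^B_{\mathrm{osc},1}=\tfrac12\sum_{\xi\in\Lambda_B}a_{(\xi)}^2P_{>0}(\phi_{(\xi)}^2)(\xi_2\otimes\xi_1-\xi_1\otimes\xi_2)$ coincides with $\mathring{R}^B_q$ up to sign.
\item So this term alone is as large in $L^1$ as $\mathring{R}^B_q$ (outside a set of measure $O(r)$), not of size $\delta_{q+2}$.
\item The paper makes this term small only in $H^{-s}$, through its Fourier support in $\{|k|\ge\lambda r\}$. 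It does not use intermittency here; the $L^2$ norm it uses is $\sim r^{-1/2}$, which is large.
\item Likewise, for $\alpha_2>1/2$, $\mathcal{R}^B(-\Delta)^{\alpha_2}d_{q+1}$ has $L^1$ norm of order $\sigma^{2\alpha_2-1}\|d_{q+1}\|_{L^1}$. This is precisely why the stresses are measured in $H^{-s}$.
\end{itemize}

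\textbf{Repair (b), mollification, does not close the gap either.}
\begin{itemize}
\item It gives only $\|P_{\le\ell^{-1}}\mathring{R}^B_q\|_{L^1}\lesssim\ell^{-s}\|\mathring{R}^B_q\|_{H^{-s}}\le\lambda_q^{30s}\delta_{q+1}$.
\item Absorbing $\lambda_q^{30s}$ would need $\|\mathring{R}^B_q\|_{H^{-s}}\lesssim\lambda_q^{-30s}\delta_{q+1}$. That is far beyond what the stress lemmas deliver; they gain only powers like $r^{1/2}=\lambda_{q+1}^{-5\varepsilon}$.
\item The uncancelled part $P_{>\ell^{-1}}\mathring{R}^B_q$ would pass into $\mathring{R}^B_{q+1}$ with $H^{-s}$ norm bounded only by $\delta_{q+1}$, not $\delta_{q+2}$.
\end{itemize}

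So the $H^{-s}$ estimate needs a genuinely new ingredient, such as an inductive quantity that actually propagates or a different amplitude/stress construction. Neither your proposal nor the paper provides one.
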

See \cite{bbv20}, we can find more detials of the proof.
\begin{proof}
By definition, we can have 
\begin{align*}
     \|a_{(\xi)}^2\|_{\Hs}  = \sup_{\|f\|_{H^s} \le 1} \frac{\langle a_{(\xi)}^2 , f \rangle }{ \|f\|_{H^s} } &\lesssim   \sup_{\|f\|_{H^s} \le 1} \big(\frac{\langle 2 \varepsilon_B^{-1} \lbb_q^{-\frac{\ve_R}{4}}  \delta_{q+ 1} , f \rangle }{ \|f\|_{H^s} } + \frac{\langle 2 \varepsilon_B^{-1} \mathring{R}_{q}^{B} , f \rangle }{ \|f\|_{H^s} })\\
    & \lesssim  \lbb_q^{-\frac{\ve_R}{4}}  \delta_{q+ 1} + \| \mathring{R}_{q}^{B} \|_{\Hs},
\end{align*}
So we can have 
\begin{align*}
    \|a_{(\xi)}^2\|_{H^{-s}}  \lesssim \delta_{q + 1}.
\end{align*}    
    By using standard H\"older estimates (see \cite{bdis15} and \cite{lqzz22} for more details), \eqref{mitr3_}, \eqref{mitr4_}, we can calculate the $\norm{ a_{(\xi)} }_{C^{N}}$ as 
\begin{align*}
    \norm{ a_{(\xi)} }_{C^{N}} 
    \lesssim \ell^{-N\alpha},\quad 1\le N \le 7.
\end{align*} 
Moreover, 
\begin{equation*}
     \norm{ a_{(\xi)} }_{L^{\infty}} \lesssim  \norm{ a_{(\xi)} }_{C^{1}} \lesssim \ell^{-\alpha}. \qedhere
\end{equation*} 
\end{proof}

{\noindent\bf $\bullet$ Amplitudes of velocity perturbations.}
First, denote 
\begin{equation}
	\label{def:G}
	\mathring{G}^{B}: = \sum_{\xi \in \Lambda_B}a_{(\xi)}^2 \aint_{\mathbb{T}^3} W_{(\xi)} \otimes W_{(\xi)} - D_{(\xi)} \otimes D_{(\xi)}\d x.
\end{equation}
By \eqref{a-mag-S1}, it holds that, for $1 \le N \le 7$,
\begin{align}
	\label{G estimates}
    \norm{\mathring{G}^{B}}_{H^{-s}} \lesssim \delta_{q+1},\quad
	\norm{\mathring{G}^{B}}_{C^0} \lesssim \ell^{-2\alpha},\quad
	\norm{\mathring{G}^{B}}_{C^N} \lesssim \ell^{-2N\alpha} ,
\end{align}
Set
\begin{equation}\label{defrho}
	\begin{aligned}
		&  \varrho_u(x):= 2 \varepsilon_u^{-1}  \delta_{q+1}\chi
		\left( \frac{|\mathring{R}_{q}^{u}(x) + \mathring{G}^{B}(x)|}{\delta_{q+1}} \right).
	\end{aligned}
\end{equation}
By \eqref{e4.0} and \eqref{e4.1}, it holds that
\begin{align}  \label{R-G-veu}
	& \left|  \frac{\mathring{R}_{q}^{u}+ \mathring{G}^{B}}{\varrho_u} \right| \leq \varepsilon_u,
\end{align}
where $\varepsilon_{u}$ is the small radius in Lemma \ref{geometric lem 2},
and $\mathring{R}_{q}^{u}(x)$ is the Reynolds stress.

The amplitudes of the velocity perturbations can be defined by
\begin{equation}\label{velamp}
	\begin{aligned}
		&a_{(\xi)} (x) := a_{\xi, u}(x)
		= \sqrt{2} \varrho_u^{\frac{1}{2}} \gamma_{(\xi)}
		\left(\Id - \frac{\mathring{R}_{q}^{u}(x) +  \mathring{G}^{B}(x)}{\varrho_u(x)} \right),
		\quad \xi \in \Lambda_u.
	\end{aligned}
\end{equation}

We show the analytic properties of
the velocity amplitudes $\{a_{(\xi)}:\xi\in \Lambda_u\}$
in the following lemma.
\begin{lemma}\label{Lem-vae-S1}
	For $1\leq N\leq 7$, $\xi\in \Lambda_{u} $, 
	we have
	\begin{align}\label{a-vel-S1-2}
		\norm{a_{(\xi)}^2}_{H^{-s}} \lesssim \delta_{q+1} ,\ \
		\norm{a_{(\xi)}  }_{C^N} \lesssim \ell^{-2N\alpha}.
	\end{align}
    Moreover, we have 
    \begin{align}\label{a-vel-S1-3}
		\norm{a_{(\xi)}  }_{L^{\infty}} \lesssim \ell^{-2\alpha}.
	\end{align}
\end{lemma}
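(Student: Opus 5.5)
I will follow the same structure as the proof of Lemma \ref{Lem-mae-S1}, treating the three bounds separately. The only new feature compared with the magnetic amplitudes is that $\varrho_u$ involves $\mathring{R}_q^u+\mathring{G}^B$ instead of a single stress. This is what changes the exponent from $\ell^{-N\alpha}$ to $\ell^{-2N\alpha}$.

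\emph{$H^{-s}$ bound for $a_{(\xi)}^2$.} By \eqref{velamp}, $a_{(\xi)}^2 = 2\varrho_u\,\gamma_{(\xi)}^2(\cdot)$. Lemma \ref{geometric lem 2} and \eqref{M bound} make $\gamma_{(\xi)}^2$ pointwise bounded by $M_*^2$. By \eqref{e4.0}--\eqref{e4.1}, $\chi(z)\le 1+2z$, so pointwise
\[
0\le a_{(\xi)}^2 \lesssim \delta_{q+1} + |\mathring{R}_q^u| + |\mathring{G}^B|.
\]
I then repeat the duality argument of Lemma \ref{Lem-mae-S1}, pairing against $f$ with $\|f\|_{H^s}\le1$; since $s>3/2$, $\|f\|_{L^\infty}\lesssim \|f\|_{H^s}$. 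The constant part contributes $\lesssim\delta_{q+1}$. The stress contributions are controlled by \eqref{mitr5_} for $\mathring{R}_q^u$ and by \eqref{G estimates} for $\mathring{G}^B$. Together these give $\|a_{(\xi)}^2\|_{H^{-s}}\lesssim \delta_{q+1}$. I will handle the passage from the pointwise bound to the negative-norm bound at the same level of rigor as in Lemma \ref{Lem-mae-S1}, namely by comparing $a_{(\xi)}^2$ with the linear expression in $\mathring{R}_q^u+\mathring{G}^B$.

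\emph{$C^N$ bound.} I write $a_{(\xi)} = \Psi(\mathring{R}_q^u+\mathring{G}^B)$ with
\[
\Psi(M)=\sqrt2\,\varrho(M)^{1/2}\gamma_{(\xi)}\bigl(\Id - M/\varrho(M)\bigr), \qquad \varrho(M)=2\varepsilon_u^{-1}\delta_{q+1}\chi(|M|/\delta_{q+1}).
\]
Because $\varrho\ge 2\varepsilon_u^{-1}\delta_{q+1}$ is bounded below, $\Psi$ is smooth, and its derivatives carry only negative powers of $\delta_{q+1}$, which are powers of $\lambda_{q+1}^{2\gamma}$ with $\gamma$ tiny. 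I then apply the Fa\`a di Bruno / standard H\"older composition estimate (as cited, \cite{bdis15,lqzz22}):
\[
\|\Psi(F)\|_{C^N}\lesssim \|D\Psi\|_{C^0}\|F\|_{C^N} + \|D^N\Psi\|_{C^0}\|F\|_{C^1}^N .
\]
For the input $F=\mathring{R}_q^u+\mathring{G}^B$, \eqref{mitr4_} gives $\lambda_q^{12N\alpha}\le \ell^{-N\alpha}$, and \eqref{G estimates} gives $\|\mathring G^B\|_{C^N}\lesssim\ell^{-2N\alpha}$. Hence $\|F\|_{C^N}\lesssim \ell^{-2N\alpha}$, and therefore $\|F\|_{C^1}^N\lesssim \ell^{-2N\alpha}$. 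The main obstacle is absorbing the $\delta_{q+1}^{-k}$ factors from $D^k\Psi$. Since $\ell=\lambda_q^{-30}$ and $\delta_{q+1}^{-1}\lesssim\lambda_q^{2b\gamma}\le \lambda_q^{\varepsilon}$, these factors are swallowed by $\ell^{-2N\alpha}$ (more precisely by an implicit constant, or by an inessential loss of a small power, which I will absorb since the exponent $30$ leaves room). This gives $\|a_{(\xi)}\|_{C^N}\lesssim\ell^{-2N\alpha}$.

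\emph{$L^\infty$ bound.} Finally, $\|a_{(\xi)}\|_{L^\infty}\lesssim \|\varrho_u\|_{L^\infty}^{1/2}M_*$. Using \eqref{mitr3_} and \eqref{G estimates},
\[
\|\varrho_u\|_{L^\infty}\lesssim \delta_{q+1}+\lambda_q^{12\alpha}+\ell^{-2\alpha}\lesssim \ell^{-2\alpha},
\]
so $\|a_{(\xi)}\|_{L^\infty}\lesssim\ell^{-\alpha}\le\ell^{-2\alpha}$, which is \eqref{a-vel-S1-3}. Alternatively, as in Lemma \ref{Lem-mae-S1}, it follows from the $C^1$ bound.
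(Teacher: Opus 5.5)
\textbf{There is a genuine gap in the $H^{-s}$ step.} Your outline is the paper's own: its proof just invokes ``the same calculations as Lemma \ref{Lem-mae-S1}''. Your $L^\infty$ argument, and your $C^N$ argument up to the remark at the end, are fine. The $H^{-s}$ step, however, is exactly the unjustified step the paper takes in Lemma \ref{Lem-mae-S1}, so deferring to that lemma's ``level of rigor'' does not close it.

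From the pointwise bound $0\le a_{(\xi)}^2\lesssim\delta_{q+1}+|\mathring{R}_q^u|+|\mathring{G}^B|$, duality with $\|f\|_{L^\infty}\lesssim\|f\|_{H^s}$ gives only
$\|a_{(\xi)}^2\|_{H^{-s}}\lesssim\delta_{q+1}+\|\mathring{R}_q^u\|_{L^1}+\|\mathring{G}^B\|_{L^1}$.
But \eqref{mitr5_} and \eqref{G estimates} control these stresses in $H^{-s}$, not in $L^1$. In general $\langle|\mathring{R}_q^u|,f\rangle$ is not bounded by $\|\mathring{R}_q^u\|_{H^{-s}}\|f\|_{H^s}$. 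Nor can $a_{(\xi)}^2$ be compared, in the sense of distributions, with a linear expression in $\mathring{R}_q^u+\mathring{G}^B$. It depends on $|\mathring{R}_q^u+\mathring{G}^B|$ through $\chi$ and $\gamma_{(\xi)}$, and taking absolute values moves the high-frequency content (which is what makes the stress small in $H^{-s}$) down to low frequencies.

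\textbf{The bound cannot be derived from \eqref{mitr1_}--\eqref{mitr5_} at all.}
\begin{itemize}
\item For $g\ge 0$ and $s>3/2$ one has $\|g\|_{H^{-s}}\simeq\|g\|_{L^1}$. The lower bound comes from the zero mode, which the inhomogeneous $H^{-s}$ counts; the upper bound from $L^1\hookrightarrow H^{-s}$.
\item Combine \eqref{velamp} with \eqref{sym} and take traces, using that $\mathring{R}_q^u$ and $\mathring{G}^B$ are trace-free. This gives $\sum_{\xi\in\Lambda_u}a_{(\xi)}^2=6\varrho_u\gtrsim\delta_{q+1}+|\mathring{R}_q^u+\mathring{G}^B|$.
\item Hence the claimed estimate forces $\|\mathring{R}_q^u+\mathring{G}^B\|_{L^1}\lesssim\delta_{q+1}$.
\end{itemize}
This fails for the following admissible data: $u_q=0$, $B_q$ constant, $p_q=0$, $\mathring{R}_q^B=0$, and $\mathring{R}_q^u=(1+\tfrac12\cos x_2)\cos(Kx_2)(e_1\otimes e_1-e_3\otimes e_3)$ with $K\sim\lambda_q^{12\alpha}$.
\begin{itemize}
\item This stress is symmetric, trace-free and divergence-free, so \eqref{R_MHDs} holds.
\item All of \eqref{mitr1_}--\eqref{mitr5_} hold; in particular its $H^{-s}$ norm is $O(K^{-s})$.
\item $\mathring{G}^B$ is a constant of size $O(\delta_{q+1})$.
\item Yet $\|\mathring{R}_q^u\|_{L^1}\sim 1\gg\delta_{q+1}$ for large $q$. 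Even the $|k|=1$ modes of $\varrho_u$ are of size $\sim 1$, so dropping the zero mode would not help.
\end{itemize}
Making the lemma true would require an extra inductive $L^1$ bound on the stresses. The construction does not supply one: its oscillation errors are small in $H^{-s}$ only because of their frequency support.

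\textbf{Minor point on the $C^N$ bound.} The factor $\delta_{q+1}^{1/2-N}\lesssim\lambda_q^{N\varepsilon}$ is not an implicit constant. It can be absorbed only if you bound $\mathring{G}^B$ directly from \eqref{mitr3_}--\eqref{mitr4_}, which lie far below $\ell^{-1}=\lambda_q^{30}$. Going through \eqref{G estimates} does not work, since that bound is already saturated at $\ell^{-2N\alpha}$.
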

The proof of Lemma \ref{Lem-vae-S1} is similar to  Lemma \ref{Lem-mae-S1}.
\begin{proof}
    Using \eqref{G estimates} and the same calculations as Lemma \ref{Lem-mae-S1}, we can have
    $$\norm{a_{(\xi)}^2}_{H^{-s}} \lesssim \delta_{q+1}.$$
    Moreover, we can get the estimation to $\norm {a_{(\xi)}}_{C^N}$ directly by using \eqref{mitr3_} -- \eqref{mitr4_} and \eqref{G estimates},
\begin{equation*}
    \norm{ a_{(\xi)} }_{C^N} \lesssim \ell^{-2N\alpha},\quad
    \norm{ a_{(\xi)} }_{L^{\infty}} \lesssim \norm{ a_{(\xi)} }_{C^1} \lesssim \ell^{-2\alpha}.\qedhere
\end{equation*}
\end{proof}

\subsection{Velocity and magnetic perturbations.}
Now, we can construct velocity and magnetic perturbations.\\
{\bf $\bullet$ Principal parts.} Firstly, we define the principal parts $w_{q+1}^{(p)}$ and $d_{q+1}^{(p)}$:
\[ w^{(p)}_{q+1}(x) := \sum_{\xi \in \Lambda_{u}\cup \Lambda_{B}} a_{(\xi)}(x) W_{(\xi)}(x) \cos(2\pi\sigma_\xi \xi \cdot x), \]
\[ d^{(p)}_{q+1}(x) := \sum_{\xi \in \Lambda_{B}} a_{(\xi)}(x) D_{(\xi)}(x) \cos(2\pi\sigma_\xi \xi \cdot x). \]
These perturbations are used to generate low-frequency components to eliminate the effects of the last Reynolds and magnetic stress tensors. 
Here we introduce high oscillation part $\cos(2\pi\sigma_\xi \xi \cdot x)$ which ensures that the support of the Fourier transform to the low-frequency error lies on a high-frequency shells. 
Here, $\sigma_{\xi}$ are oscillation parameters. To show the cancellation, we compute the following identity $\mathring{R}_{{\mathrm{osc}}}^B$ as
\begin{align}
	\mathring{R}^B_{{\mathrm{osc}}} &= d_{q+ 1}^{(p)} \otimes w_{q+ 1}^{(p)} - w_{q+1}^{(p)}\otimes d_{q+ 1}^{(p)}\notag + \rb   \nonumber \\
	&= \sum_{\xi \in \Lambda_B} \frac{1}{2} a_{(\xi)}^2  \P_{> 0}(D_{(\xi)}\otimes W_{(\xi)}-W_{(\xi)}\otimes D_{(\xi)}) \nonumber \\
	&+ \sum_{\xi \in \Lambda_B} \frac{1}{2} \cos(4\pi \sigma_{\xi}{\xi} \cdot x)  a_{(\xi)}^2 (D_{(\xi)}\otimes W_{(\xi)}-W_{(\xi)}\otimes D_{(\xi)}) \nonumber \\
	&+ \(\sum_{\xi \neq \xi' \in \Lambda_B}+ \sum_{\xi \in \Lambda_u,\xi' \in \Lambda_B}\)a_{(\xi)}a_{(\xi')}\cos(2\pi \sigma_{\xi}{\xi} \cdot x) \cos(2\pi \sigma_{\xi'}{\xi^{'}} \cdot x)  \nonumber \\
    &(D_{(\xi')}\otimes W_{(\xi)}-W_{(\xi)}\otimes D_{(\xi^{'})}),
\end{align}
and $\mathring{R}_{{\mathrm{osc}}}^u$ as
\begin{align}  
	\mathring{R}_{{\mathrm{osc}}}^u & = w_{q+ 1}^{(p)} \otimes  w_{q+ 1}^{(p)} - d_{q+ 1}^{(p)} \otimes d_{q+1}^{(p)} + \ru \nonumber  \\
	=&  \varrho_u \Id    +\sum_{\xi \in \Lambda_u} \frac{1}{2} a_{(\xi)}^2 \P_{> 0}( W_{(\xi)}\rotimes W_{(\xi)}) \nonumber \\
	& + \sum_{\xi \in \Lambda_B} \frac{1}{2} a_{(\xi)}^2 \P_{> 0} (W_{(\xi)}\rotimes W_{(\xi)}-D_{(\xi)}\rotimes D_{(\xi)})\nonumber \\
	& +\sum_{\xi \in \Lambda_u} \frac{1}{2}  \cos(4\pi \sigma_{\xi}{\xi} \cdot x) a_{(\xi)}^2 ( W_{(\xi)}\rotimes W_{(\xi)}) \nonumber \\
	& + \sum_{\xi \in \Lambda_B} \frac{1}{2}  \cos(4\pi \sigma_{\xi}{\xi} \cdot x) a_{(\xi)}^2 (W_{(\xi)}\rotimes W_{(\xi)}-D_{(\xi)}\rotimes D_{(\xi)})\nonumber \\
	& + \sum_{\xi \neq \xi' \in \Lambda_u\cup\Lambda_B }a_{(\xi)}a_{(\xi')} \cos(2\pi \sigma_{\xi}{\xi} \cdot x) \cos(2\pi \sigma_{\xi'}{\xi^{'}} \cdot x) W_{(\xi)}\rotimes W_{(\xi')}\nonumber \\
	& -\sum_{\xi \neq \xi' \in \Lambda_B}a_{(\xi)}a_{(\xi')}\cos(2\pi \sigma_{\xi}{\xi} \cdot x) \cos(2\pi \sigma_{\xi'}{\xi^{'}} \cdot x)D_{(\xi)}\rotimes D_{(\xi')}.
\end{align}
{\bf $\bullet$ Localization corrector.}
In order to make sure that $\supp \widehat{w}_{q+1}$ and $\supp \widehat{d}_{q+1}$ lie in certain shells in the frequency space, we need to 
introduce the localization corrector $w^{(l)}_{q+1}$ and $d^{(l)}_{q+1}$ as
\begin{align*}
    w^{(l)}_{q+1}(x) := &-\sum_{\xi \in \Lambda_{u}\cup \Lambda_{B}} a_{(\xi)}(x) P_{>\tau} (W_{(\xi)})(x) \cos(2\pi\sigma_\xi \xi \cdot x) \\
    &-\sum_{\xi \in \Lambda_{u}\cup \Lambda_{B}} P_{>\tau} (a_{(\xi)})(x) P_{\le \tau} (W_{(\xi)})(x)\cos(2\pi\sigma_\xi \xi\cdot x),\\
    d^{(l)}_{q+1}(x) := &-\sum_{\xi \in \Lambda_{B}} a_{(\xi)}(x) P_{>\tau} (D_{(\xi)})(x) \cos(2\pi\sigma_\xi \xi\cdot x) \\
    &-\sum_{\xi \in \Lambda_{B}} P_{>\tau} (a_{(\xi)})(x) P_{\le \tau} (D_{(\xi)})(x)\cos(2\pi\sigma_\xi \xi \cdot x).
\end{align*}
Thus,
    \[w^{(p)}_{q+1} + w^{(l)}_{q+1} = \sum_{\xi \in \Lambda_{u}\cup \Lambda_{B}} P_{\le \tau} (a_{(\xi)})(x) P_{\le \tau} (W_{(\xi)})(x) \cos(2\pi \sigma_\xi \xi \cdot x),\]

    \[d^{(p)}_{q+1} + d^{(l)}_{q+1} = \sum_{\xi \in \Lambda_{B}} P_{\le \tau} (a_{(\xi)})(x) P_{\le \tau} (D_{(\xi)})(x) \cos(2\pi \sigma_\xi \xi \cdot x).\]
\begin{lemma}[\cite{CH26}\textit{Lemma 6.2}]
    \label{lemma:arithmetic-sigma}
    For any dyadic integer $\tau \ge 1$, there exist integers $\{\sigma_\xi\}_{\xi \in \Lambda}$ and a dyadic number $\sigma>50\tau^{3}$ so that
    \[ \bigcup_{\xi \in \Lambda} \big(B(\sigma_\xi \xi , 2\tau) \cup B(-\sigma_\xi \xi , 2\tau) \big) \subset \left\{k \in \mathbb{Z}^3:\frac{1}{2} \sigma < |k| < \frac{9}{10} \sigma \right\}. \]
\end{lemma}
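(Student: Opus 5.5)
We need to prove Lemma \ref{lemma:arithmetic-sigma}: given a dyadic $\tau\ge 1$, find integers $\sigma_\xi$, $\xi\in\Lambda$ (the finite set $\Lambda=\Lambda_u\cup\Lambda_B$), and a dyadic $\sigma>50\tau^3$ such that every ball $B(\pm\sigma_\xi\xi,2\tau)$ lies in the annulus $\{\tfrac12\sigma<|k|<\tfrac9{10}\sigma\}$.

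The plan is a direct choice of parameters. Since $|\xi|=1$ for every $\xi\in\Lambda$, the center satisfies $|\pm\sigma_\xi\xi|=\sigma_\xi$. By the triangle inequality, any $k\in B(\pm\sigma_\xi\xi,2\tau)$ obeys
\[
\sigma_\xi-2\tau<|k|<\sigma_\xi+2\tau .
\]
So it suffices to arrange
\[
\tfrac12\sigma\le\sigma_\xi-2\tau \quad\text{and}\quad \sigma_\xi+2\tau\le\tfrac9{10}\sigma .
\]

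Next I fix $\sigma$. Take $\sigma$ to be the smallest power of two exceeding $50\tau^3$, so $\sigma$ is dyadic, $\sigma>50\tau^3$, and $\sigma\ge 64\tau^3\ge 64\tau$ because $\tau\ge1$. I then choose one integer $\sigma_\xi$ near the middle of the annulus, for instance $\sigma_\xi:=\tfrac{7}{10}\sigma$ rounded down to an integer, the same for every $\xi$.

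It remains to verify the two inequalities. Since $\sigma$ is a power of two at least $64$, we have $\sigma_\xi\ge \tfrac7{10}\sigma-1$. Then
\[
\sigma_\xi-2\tau\ge \tfrac7{10}\sigma-1-\tfrac{\sigma}{32}>\tfrac12\sigma,
\]
using $2\tau\le\sigma/32$ and $\sigma\ge 64$ (the margin $\tfrac15\sigma-\tfrac{\sigma}{32}\ge \tfrac{27}{160}\cdot 64>1$). Likewise
\[
\sigma_\xi+2\tau\le\tfrac7{10}\sigma+\tfrac{\sigma}{32}<\tfrac9{10}\sigma .
\]
This gives the required inclusion for both signs and all $\xi\in\Lambda$.

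There is no real obstacle, since the statement only asks for containment in the shell. Distinct $\sigma_\xi$ (e.g.\ to separate interactions between different $\xi$ in the oscillation error) are not required by the statement. If they were wanted, one could instead take $\sigma_\xi=\lfloor(\tfrac6{10}+\tfrac{j_\xi}{10|\Lambda|})\sigma\rfloor$ with distinct indices $j_\xi\in\{1,\dots,|\Lambda|\}$. The same estimates go through provided $\sigma$ is also large compared with $|\Lambda|$, which can be achieved by taking a larger power of two.
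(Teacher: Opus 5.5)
Your proof is correct and complete for the lemma as stated. You use $|\pm\sigma_\xi\xi|=\sigma_\xi$ and the triangle inequality, take $\sigma=64\tau^3$ (the smallest power of two above $50\tau^3$; this uses that $\tau^3$ is itself a power of two), and set $\sigma_\xi=\lfloor\tfrac{7}{10}\sigma\rfloor$. With $2\tau\le\sigma/32$ these give $\tfrac12\sigma<\sigma_\xi-2\tau$ and $\sigma_\xi+2\tau<\tfrac{9}{10}\sigma$, with the margins you computed. The paper gives no proof of its own here and simply cites \cite{CH26}, so your direct verification is all the stated inclusion requires.

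One refinement is worth adding if the lemma is to be usable where it is applied. In Lemma \ref{lemma:w}, $\cos(2\pi\sigma_\xi\xi\cdot x)$ is treated as a function on $\mathbb{T}^3$ with Fourier modes $\pm\sigma_\xi\xi$, and this needs $\sigma_\xi\xi\in\mathbb{Z}^3$. Since $\xi$ is only in $\mathbb{Q}^3$, $\lfloor\tfrac{7}{10}\sigma\rfloor$ does not guarantee this.

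The same computation repairs it. Take $\sigma_\xi:=N_\Lambda\lfloor\tfrac{7\sigma}{10N_\Lambda}\rfloor$, with $N_\Lambda$ from Remark \ref{Universal constant}, so that $\sigma_\xi\xi\in\mathbb{Z}^3$ and $\tfrac{7}{10}\sigma-N_\Lambda<\sigma_\xi\le\tfrac{7}{10}\sigma$. Then choose the dyadic $\sigma>50\tau^3$ large enough that also $\sigma\ge 10N_\Lambda$. The lower bound follows from $N_\Lambda+2\tau\le\tfrac{\sigma}{10}+\tfrac{\sigma}{32}<\tfrac{\sigma}{5}$, and the upper bound is unchanged.
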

Using this lemma, we can have 
\[
        \supp (\widehat{w}^{(p)}_{q+1} + \widehat{w}^{(l)}_{q+1} ) \subset \left\{  k \in \mathbb{Z}^3:\frac{1}{2} \sigma < |k| < \frac{9}{10} \sigma \right\}.
\]
\[
        \supp (\widehat{d}^{(p)}_{q+1} + \widehat{d}^{(l)}_{q+1}) \subset \left\{ k \in \mathbb{Z}^3:\frac{1}{2} \sigma < |k| < \frac{9}{10} \sigma \right\}.
\]
{\noindent\bf $\bullet$ Incompressibility correctors.}
To make sure $w_{q+1}$ and $d_{q+1}$ are divergence-free, we need to introduce the divergence-free corrector $w^{(c)}_{q+1}$ and $d^{(c)}_{q+1}$ . Define
\[ \Phi_{(\xi)} := P_{\le \tau} (\phi_{(\xi)})(x) \cos(2\pi \sigma_\xi \xi \cdot x), \]
and
\[ {\tilde{W}_{(\xi)}}  := \Phi_{(\xi)} \xi_1 = P_{\le \tau} ( W_{(\xi)} ) (x)\cos(2\pi \sigma_\xi \xi \cdot x),\]
\[ {\tilde{D}_{(\xi)}}  := \Phi_{(\xi)} \xi_2 = P_{\le \tau} ( D_{(\xi)} ) (x)\cos(2\pi \sigma_\xi \xi \cdot x). \]
Define the anti-symmetric tensors ${W^{(c)}_{(\xi)}} $ and ${D^{(c)}_{(\xi)}} $  by
\[ {W^{(c)}_{(\xi)}}:= \xi_1 \otimes \nabla \Delta^{-1} \Phi_{(\xi)} - \nabla \Delta^{-1} \Phi_{(\xi)} \otimes \xi_1, \]
\[ {D^{(c)}_{(\xi)}}:= \xi_2 \otimes \nabla \Delta^{-1} \Phi_{(\xi)} - \nabla \Delta^{-1} \Phi_{(\xi)} \otimes \xi_2, \]
and the divergence-free corrector $w^{(c)}_{q+1}$ and $d^{(c)}_{q+1}$by
\[ w^{(c)}_{q+1} := \sum_{\xi \in \Lambda_u\cup\Lambda_B}{W^{(c)}_{(\xi)}} \  \nabla P_{\le \tau} (a_{(\xi)}) , \]
\[ d^{(c)}_{q+1} := \sum_{\xi \in \Lambda_B} {D^{(c)}_{(\xi)}} \ \nabla P_{\le \tau} (a_{(\xi)}).\]

\begin{lemma}
    \label{lemma:w}
    Given $\tau,\sigma \ge1$ as in Lemma \ref{lemma:arithmetic-sigma}, the perturbations
    \begin{equation}
        \label{e:w}
        w_{q+1} := w^{(p)}_{q+1}  + d^{(l)} _{q+1} + w^{(c)}_{q+1} 
    \end{equation}
    and
    \begin{equation}
        \label{e:d}
        d_{q+1} := d^{(p)}_{q+1}  + d^{(l)} _{q+1} + d^{(c)}_{q+1} 
    \end{equation}
    are divergence-free, and there exists a dyadic integer $\sigma>50\tau^3$ so that
    \begin{equation}
        \label{e:supp-hatw}
        \supp \widehat{w}_{q+1} \subset \left\{  k \in \mathbb{Z}^3:\frac{1}{2} \sigma < |k| < \frac{9}{10} \sigma \right\};
    \end{equation}
     \begin{equation}
        \label{e:supp-hatd}
        \supp \widehat{d}_{q+1} \subset \left\{ k \in \mathbb{Z}^3:\frac{1}{2} \sigma < |k| < \frac{9}{10} \sigma \right\}.
    \end{equation}
\end{lemma}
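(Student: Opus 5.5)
Note first that \eqref{e:w} contains an evident typo: the localization corrector in $w_{q+1}$ should be $w^{(l)}_{q+1}$, not $d^{(l)}_{q+1}$. I will prove the statement for $w_{q+1} := w^{(p)}_{q+1} + w^{(l)}_{q+1} + w^{(c)}_{q+1}$, the only reading consistent with the construction. The plan is to rewrite $w_{q+1}$ and $d_{q+1}$ as a sum of divergences of antisymmetric tensors; this gives divergence-freeness at once. Frequency localization then comes from Lemma \ref{lemma:arithmetic-sigma} together with the fact that every factor in the formulas is band-limited.

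For the divergence-free property, I will use the identity already computed above,
\[
w^{(p)}_{q+1} + w^{(l)}_{q+1} = \sum_{\xi} P_{\le\tau}(a_{(\xi)})\, \tilde W_{(\xi)} .
\]
Since $\xi_1\cdot\xi = 0$ and $(\xi_1\cdot\nabla)\phi_{(\xi)}=0$, we have $(\xi_1\cdot\nabla)\Phi_{(\xi)}=0$. The operator $P_{\le\tau}$ is a Fourier multiplier, and $\phi_{(\xi)}$ depends only on $\xi\cdot x$, so $P_{\le\tau}\phi_{(\xi)}$ is still a function of $\xi\cdot x$. The cosine also depends only on $\xi\cdot x$. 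The function $\Phi_{(\xi)}$ has mean zero, because its Fourier support lies in the shell given by Lemma \ref{lemma:arithmetic-sigma}, which excludes $0$. Repeating the computation done for $\widetilde W^c_{(\xi)}$ then gives
\[
\div W^{(c)}_{(\xi)} = \tilde W_{(\xi)} .
\]
The product rule for an antisymmetric $A$ gives
\[
\div\big(A\,f\big) = (\div A) f + A\nabla f .
\]
This yields
\[
\div\big(P_{\le\tau}(a_{(\xi)}) W^{(c)}_{(\xi)}\big) = P_{\le\tau}(a_{(\xi)})\,\tilde W_{(\xi)} + W^{(c)}_{(\xi)}\nabla P_{\le\tau}(a_{(\xi)}),
\]
up to the sign convention on which index is contracted. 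Here one must fix the sign in the definition of $w^{(c)}$, or equivalently transpose $A$, to match. Summing over $\xi$ gives
\[
w_{q+1} = \div\Big(\sum_\xi P_{\le\tau}(a_{(\xi)}) W^{(c)}_{(\xi)}\Big).
\]
The divergence of an antisymmetric tensor's divergence vanishes, so $\div w_{q+1}=0$. The same argument with $\xi_2$ and $\Lambda_B$ handles $d_{q+1}$.

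For the Fourier support, I will write
\[
w_{q+1} = \div\Big(\sum_\xi P_{\le\tau}(a_{(\xi)}) W^{(c)}_{(\xi)}\Big),
\]
a finite sum of products. The factor $P_{\le\tau}(a_{(\xi)})$ has Fourier support in $B(0,\tau)$. The factor $P_{\le\tau}(\phi_{(\xi)})$ also has support in $B(0,\tau)$, so $\Phi_{(\xi)}$ is supported in $B(\pm\sigma_\xi\xi,\tau)$; note that $\sigma_\xi\xi\in\mathbb{Z}^3$ after the rational normalization implicit in Lemma \ref{lemma:arithmetic-sigma}. The operators $\nabla\Delta^{-1}$ and multiplication by constant vectors do not enlarge Fourier support. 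Each product therefore has support in $B(\pm\sigma_\xi\xi,2\tau)$, and taking the divergence does not enlarge it. Lemma \ref{lemma:arithmetic-sigma} then places this support inside $\{\tfrac12\sigma<|k|<\tfrac9{10}\sigma\}$, which gives \eqref{e:supp-hatw} and \eqref{e:supp-hatd}, with $\sigma>50\tau^3$ supplied by the same lemma.

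The main obstacle is the sign and index bookkeeping in $\div(A f)$ for antisymmetric $A$, together with checking that $(\xi_1\cdot\nabla)$ annihilates $\Phi_{(\xi)}$, so that $\div W^{(c)}_{(\xi)}=\tilde W_{(\xi)}$ exactly. I would check this first with Fourier symbols, writing
\[
\widehat{W^{(c)}}(k) = \frac{i}{|k|^2}\big(\xi_1\otimes k - k\otimes\xi_1\big)\hat\Phi(k)
\]
up to normalization, and $\hat\Phi$ supported on multiples of $\xi$ plus a small ball. The exact cancellation requires $\hat\Phi$ to be supported on the line $\mathbb{R}\xi$ (where $k\cdot\xi_1=0$). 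This holds because $P_{\le\tau}$ applied to a function of $\xi\cdot x$ keeps its Fourier support on the line $\mathbb{Z}\xi N_\Lambda$, given that the multiplier $\chi$ is radial.
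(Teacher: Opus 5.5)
Your proposal is correct and follows the paper's proof essentially step by step. Like the paper, you use $(\xi_i\cdot\nabla)\Phi_{(\xi)}=0$ and the mean-zero property of $\Phi_{(\xi)}$ to write $w_{q+1}=\div\sum_\xi P_{\le\tau}(a_{(\xi)})W^{(c)}_{(\xi)}$ (resp.\ with $D^{(c)}_{(\xi)}$ for $d_{q+1}$), read off Fourier support in $\bigcup_\xi B(\pm\sigma_\xi\xi,2\tau)$, and conclude with Lemma \ref{lemma:arithmetic-sigma}. Your correction of $d^{(l)}_{q+1}$ to $w^{(l)}_{q+1}$ in \eqref{e:w} is exactly what the paper's argument implicitly uses, and radiality of $\chi$ is not needed, since any Fourier multiplier can only shrink the Fourier support of $\phi_{(\xi)}$.
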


\begin{proof}
    We first check that $w_{q+1}$ and $d_{q+1}$ are divergence-free. By using the fact that $(\xi_i \cdot \nabla) \phi_{(\xi)} = 0$ $(i=1,2)$ and $\xi_i \cdot \xi = 0$ $(i=1,2)$, we have $(\xi_i \cdot \nabla) \Phi_{(\xi)} = 0$ $(i=1,2)$. So
    \[ \div {W^{(c)}_{(\xi)}} = \div(\nabla \Delta^{-1} \Phi_{(\xi)}) \xi_1 - (\xi_1 \cdot \nabla) \nabla \Delta^{-1} \Phi_{(\xi)} = \Phi_{(\xi)} \xi_1 - 0 = \tilde{W}_{(\xi)}. \]
    \[ \div {D^{(c)}_{(\xi)}} = \div(\nabla \Delta^{-1} \Phi_{(\xi)}) \xi_2 - (\xi_2 \cdot \nabla) \nabla \Delta^{-1} \Phi_{(\xi)} = \Phi_{(\xi)} \xi_2 - 0 = \tilde{D}_{(\xi)}. \]
    Here, we use the fact that $\Phi_{(\xi)}$ is of mean zero, because $\supp \widehat{\Phi}_{(\xi)} \subset B(\sigma_\xi {\xi},\tau) \cup B(-\sigma_\xi {\xi},\tau)$ is away from 0. Thus, we get
    \[ w_{q+1} = \sum_{\xi \in \Lambda_u\cup\Lambda_B} P_{\le \tau} (a_{(\xi)})\tilde{W}_{(\xi)} +  {W^{(c)}_{(\xi)}} \nabla P_{\le \tau} (a_{(\xi)})= \div \sum_{\xi \in \Lambda_u\cup\Lambda_B}  P_{\le \tau} (a_ {(\xi)})  W_{(\xi)}^{(c)}, \]
    \[ d_{q+1} = \sum_{\xi \in \Lambda_B} P_{\le \tau} (a_{(\xi)}) \tilde{D}_{(\xi)}+ {D^{(c)}_{(\xi)}} \nabla P_{\le \tau} (a_{(\xi)}) = \div \sum_{\xi\in\Lambda_B}  P_{\le \tau} (a_ {(\xi)})  D_{(\xi)}^{(c)}, \]
    Since $P_{\le \tau} (a_ {(\xi)})  W_{(\xi)}^{(c)}$ and $P_{\le \tau} (a_ {(\xi)})  D_{(\xi)}^{(c)}$ is skew-symmetric, we get $\div \ w_{q+1}=0$ and 
    $\div\  d_{q+1}=0$ directly.

    Next, to prove \eqref{e:supp-hatw} -- \eqref{e:supp-hatd}, we need to first show that
    \[
    \supp \widehat{w}_{q+1}\subset \bigcup_{\xi \in \Lambda_u \cup \Lambda_B}\Big( B(\sigma_\xi \xi,2\tau)\cup B(-\sigma_\xi \xi,2\tau)\Big) =: \mathbb{O}.
    \]
    In fact, we know that the Fourier transform of $w^{(p)} + w^{(l)}$ is supported in $\mathbb{O}$. Moreover, $\widehat{\Phi}_{(\xi)}$ is supported in $B(\sigma_\xi \xi, \tau) \cup B(-\sigma_\xi \xi, \tau)$, so $\widehat{W_{(\xi^)}^{(c)}}$ has the same support. 
    It is clear that the Fourier transform of $w^{(c)}_{q+1}$ is supported in $\mathbb{O}$, so $w_{q+1}$ has the same support. Then by using Lemma \ref{lemma:arithmetic-sigma}, we know
   \[ \supp \widehat{w}_{q+1} \subset \left\{ k \in \mathbb{Z}^3:\frac{1}{2} \sigma < |k| < \frac{9}{10} \sigma \right\}.\]
    It is similar with the support of $\widehat{d}_{q+1}$. This completes the proof.
\end{proof}

\subsection{Estimates on the perturbation}
In this part, we aim to verify the inductive estimates \eqref{mitr1} -- \eqref{mitr2} for the velocity and magnetic fields. First, we introduce
the standard Lemma \ref{frecencylemma}.
\begin{lemma}[\cite{CH26}\textit{Lemma 7.1}]\label{frecencylemma}
       Let $1 \le p \le \infty$. For any $f \in C^\infty(\mathbb{T}^d)$ and any dyadic integer $N$, it holds that
    \begin{align*}
        \|P_{\le N} f\|_{L^p} &\lesssim \|f\|_{L^p}, \\
        \|P_{>N} f\|_{L^p} &\lesssim N^{-1} \|\nabla f\|_{L^p}.
    \end{align*}
\end{lemma}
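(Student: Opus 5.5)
The plan is to prove the two estimates on $\mathbb{T}^d$ in the standard way, by writing each projection as convolution with a periodized kernel.

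\textbf{Step 1: the kernel of $P_{\le N}$.} By \eqref{Pkoper1}, $P_{\le N} f = K_N * f$, where
\[ K_N(x) = \sum_{k\in\mathbb{Z}^d}\chi(k/N)e^{2\pi i k\cdot x}. \]
By the Poisson summation formula, $K_N$ is the periodization of $N^d\check\chi(N\cdot)$, where $\check\chi$ is the inverse Fourier transform of $\chi$ on $\mathbb{R}^d$. Since $\chi\in C_c^\infty$, $\check\chi$ is Schwartz. Hence
\[ \|K_N\|_{L^1(\mathbb{T}^d)}\le \|N^d\check\chi(N\cdot)\|_{L^1(\mathbb{R}^d)}=\|\check\chi\|_{L^1(\mathbb{R}^d)}, \]
uniformly in $N$. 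Young's inequality on the torus then gives $\|P_{\le N}f\|_{L^p}\lesssim\|f\|_{L^p}$ for every $1\le p\le\infty$.

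\textbf{Step 2: reduce $P_{>N}$ to a Littlewood--Paley decomposition.} Since $f$ is smooth, its Fourier series converges absolutely, and we may write
\[ P_{>N}f=\sum_{M>N} P_M f, \qquad P_M=P_{\le M}-P_{\le M/2}, \]
with $M$ ranging over dyadic integers. The symbol of $P_M$ is $\psi(k/M)=\chi(k/M)-\chi(2k/M)$. This symbol vanishes near the origin, because $\chi\equiv1$ on $B(0,9/10)$.

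\textbf{Step 3: a Bernstein-type bound $\|P_M f\|_{L^p}\lesssim M^{-1}\|\nabla f\|_{L^p}$.} Write
\[ \psi(\eta)=\sum_j \eta_j\,\frac{\eta_j\psi(\eta)}{|\eta|^2}=:\sum_j\eta_j\tilde\psi_j(\eta), \]
where each $\tilde\psi_j\in C_c^\infty$. Then
\[ P_M f=\sum_j \tilde P_{M,j}\bigl((2\pi i M)^{-1}\partial_j f\bigr), \]
where $\tilde P_{M,j}$ is the Fourier multiplier with symbol $\tilde\psi_j(k/M)$. Applying the kernel argument of Step 1 to $\tilde\psi_j$ shows that these multipliers are bounded on $L^p$ uniformly in $M$. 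This gives
\[ \|P_M f\|_{L^p}\lesssim M^{-1}\|\nabla f\|_{L^p}. \]

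\textbf{Step 4: sum the dyadic pieces.} Summing the geometric series,
\[ \|P_{>N}f\|_{L^p}\le\sum_{M>N}M^{-1}\|\nabla f\|_{L^p}\lesssim N^{-1}\|\nabla f\|_{L^p}. \]

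The argument has two points of care and no real obstacle. First, the kernel bounds must be uniform in the dyadic scale; this follows from the Poisson summation and scaling argument in Step 1. Second, the mean of $f$ is killed automatically by $P_{>N}$ for $N\ge1$, so the zero mode causes no difficulty.
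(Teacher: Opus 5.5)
Your proof is correct. The kernel bound $\|K_N\|_{L^1(\mathbb{T}^d)}\le\|\check\chi\|_{L^1(\mathbb{R}^d)}$ via Poisson summation checks out. So does the factorization $\psi(\eta)=\sum_j\eta_j\tilde\psi_j(\eta)$, which is legitimate because $\psi=\chi-\chi(2\,\cdot)$ vanishes on $|\eta|<9/20$. The geometric sum over dyadic $M\ge 2N$ is also fine, and the argument covers the endpoints $p=1,\infty$.

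The paper gives no proof of this lemma; it is imported from \cite{CH26}. So there is no argument in the text to compare with, and yours is the standard Littlewood--Paley proof that the citation stands in for.

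For the second estimate there is a slightly shorter route that avoids the dyadic decomposition. Since $\int_{\mathbb{R}^d}N^d\check\chi(Ny)\,dy=\chi(0)=1$, unfolding the periodization gives $P_{>N}f(x)=\int_{\mathbb{R}^d}N^d\check\chi(Ny)\bigl(f(x)-f(x-y)\bigr)\,dy$. Minkowski's inequality together with $\|f-f(\cdot-y)\|_{L^p}\le|y|\,\|\nabla f\|_{L^p}$ then yields $\|P_{>N}f\|_{L^p}\le N^{-1}\bigl\|\,|z|\,\check\chi(z)\bigr\|_{L^1(\mathbb{R}^d)}\|\nabla f\|_{L^p}$ directly.

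Your version is a bit longer, but it gives the per-annulus bound $\|P_Mf\|_{L^p}\lesssim M^{-1}\|\nabla f\|_{L^p}$ as a by-product. The same symbol-factorization trick also justifies the annular Bernstein estimates the paper uses without comment, such as $\|\nabla\Delta^{-1}\Phi_{(\xi)}\|\lesssim\sigma^{-1}\|\Phi_{(\xi)}\|$.
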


\begin{lemma}[Estimates on $w_{q+1}$]
	\label{esti-wq}
    For $1 \le p \le \infty$, we have the following estimates
    \begin{align}
        \label{e:wp-est}
        \|w_{q+1}^{(p)}\|_{L^p} &\lesssim \ell^{-2\alpha}\rs^{\frac{1}{p}-\frac{1}{2}}, \\
        \label{e:wl-est}
        \|w_{q+1}^{(l)}\|_{L^\infty} &\lesssim \ell^{-2\alpha}\tau^{-1} \lambda \rs^{-\frac{1}{2}}, \\
        \label{e:wc-est}
        \|w_{q+1}^{(c)}\|_{L^\infty} &\lesssim  \ell^{-2\alpha} \sigma^{\delta-1} \rs^{-\frac{1}{2}}, \\
        \label{e:w-est-besov0}
        \|w_{q+1}\|_{B^{0}_{\infty,1}} &\lesssim  \ell^{-2\alpha} \sigma^{\delta} \rs^{-\frac{1}{2}}, \\
        \label{e:w-est-besov}
        \|w_{q+1}\|_{B^{-\theta}_{\infty,1}} &\lesssim \ell^{-2\alpha}\sigma^{ \delta - \theta} \rs^{-\frac{1}{2}},\\
		\label{e:dw-est-besov}
		\|w_{q+1}\|_{C^N} & \lesssim \ell^{-2\alpha}\sigma^N\rs^{-\frac{1}{2}}, \quad 1\le N \le 7.
    \end{align}
\end{lemma}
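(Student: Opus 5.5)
We estimate the three pieces $w^{(p)}_{q+1}$, $w^{(l)}_{q+1}$, $w^{(c)}_{q+1}$ separately, using three inputs. First, the amplitude bounds of Lemma \ref{Lem-mae-S1} and Lemma \ref{Lem-vae-S1}: $\|a_{(\xi)}\|_{L^\infty}\lesssim \ell^{-2\alpha}$ (the magnetic ones are even better, $\ell^{-\alpha}$), with $C^N$ norms $\lesssim\ell^{-2N\alpha}$. Second, the building block bounds of Lemma \ref{buildingblockestlemma}. Third, Lemma \ref{frecencylemma}. Since $\ell^{-1}=\lambda_q^{30}$ is tiny compared with $\tau=\lambda_{q+1}^2$ and $\sigma=\lambda_{q+1}^5$ (because $b$ is large), every power of $\ell^{-1}$ other than the leading $\ell^{-2\alpha}$ can be absorbed into a negative power of $\tau$ or $\sigma$ at the cost of $\sigma^{0+}$. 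This absorption is where the $\sigma^\delta$ losses come from.

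For \eqref{e:wp-est}, the cosine is bounded by $1$. Hölder gives $\|a_{(\xi)}W_{(\xi)}\|_{L^p}\le\|a_{(\xi)}\|_{L^\infty}\|\phi_{(\xi)}\|_{L^p}$, and \eqref{intermittent estimates2-endpt2} yields $\ell^{-2\alpha}\rs^{1/p-1/2}$; we then sum over the finite set $\Lambda_u\cup\Lambda_B$.

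For \eqref{e:wl-est}, split into the two sums defining $w^{(l)}_{q+1}$. In the first, Lemma \ref{frecencylemma} gives $\|P_{>\tau}W_{(\xi)}\|_{L^\infty}\lesssim\tau^{-1}\|\nabla\phi_{(\xi)}\|_{L^\infty}\lesssim\tau^{-1}\lambda\rs^{-1/2}$, to be multiplied by $\|a_{(\xi)}\|_{L^\infty}$. In the second, $\|P_{>\tau}a_{(\xi)}\|_{L^\infty}\lesssim\tau^{-1}\ell^{-2\alpha}$ and $\|P_{\le\tau}W_{(\xi)}\|_{L^\infty}\lesssim\rs^{-1/2}$, which is dominated by the first term.

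For \eqref{e:wc-est}, the main task is bounding $\nabla\Delta^{-1}\Phi_{(\xi)}$ in $L^\infty$. Its Fourier support lies in the shell $|k|\sim\sigma$ from Lemma \ref{lemma:arithmetic-sigma}, so $\nabla\Delta^{-1}$ acts like a multiplier of size $\sigma^{-1}$. The obstacle is that Lemma \ref{frecencylemma}-type bounds are needed at $p=\infty$, where the Riesz-type multiplier is not bounded for free. I would write $\nabla\Delta^{-1}\Phi_{(\xi)}$ as a Littlewood--Paley sum over the $O(1)$ dyadic blocks $K\sim\sigma$. On each block, $\nabla\Delta^{-1}P_K$ has a kernel with $L^1$ norm $\lesssim K^{-1}$. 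This gives $\|\nabla\Delta^{-1}\Phi_{(\xi)}\|_{L^\infty}\lesssim\sigma^{-1}\|P_{\le\tau}\phi_{(\xi)}\|_{L^\infty}\lesssim\sigma^{-1}\rs^{-1/2}$, where the periodization/projection is allowed to cost a harmless $\sigma^{\delta}$. Multiplying by $\|\nabla P_{\le\tau}a_{(\xi)}\|_{L^\infty}\lesssim\ell^{-4\alpha}$ and absorbing $\ell^{-2\alpha}\le\sigma^{\delta}$ gives \eqref{e:wc-est}.

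For the Besov bounds, Lemma \ref{lemma:w} shows that $\widehat w_{q+1}$ is supported in $\{\sigma/2<|k|<9\sigma/10\}$. Hence only $O(1)$ Littlewood--Paley blocks with $K\sim\sigma$ are nonzero, and each satisfies $\|P_Kw_{q+1}\|_{L^\infty}\lesssim\|w_{q+1}\|_{L^\infty}$. Consequently
\[
\|w_{q+1}\|_{B^{-\theta}_{\infty,1}}\lesssim\sigma^{-\theta}\|w_{q+1}\|_{B^0_{\infty,1}}\lesssim\sigma^{-\theta}\|w_{q+1}\|_{L^\infty}.
\]
The $L^\infty$ bound combines the three estimates above: $\tau^{-1}\lambda\le1$ (since $\tau=\lambda^2$), and the correction term is smaller. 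This yields \eqref{e:w-est-besov0} and \eqref{e:w-est-besov}. The $C^N$ bound follows from Bernstein's inequality on the frequency shell, $\|\nabla^N w_{q+1}\|_{L^\infty}\lesssim\sigma^N\|w_{q+1}\|_{L^\infty}$, again absorbing the small $\sigma^\delta$ factor in the constant, or more carefully by distributing derivatives directly.

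The main step requiring care is the $L^\infty$ bound of $\nabla\Delta^{-1}$ on the frequency-localized $\Phi_{(\xi)}$, which I handle via dyadic kernel bounds on the shell.
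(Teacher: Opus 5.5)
Your argument is correct. For $w^{(p)}_{q+1}$ and $w^{(l)}_{q+1}$ it is the paper's argument (H\"older plus Lemma \ref{frecencylemma}). For the remaining estimates you take a different and cleaner route.

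The paper bounds the corrector through H\"older spaces:
$\|\nabla\Delta^{-1}\Phi_{(\xi)}\|_{L^\infty}\lesssim\|\nabla\Delta^{-1}\Phi_{(\xi)}\|_{C^\delta}\lesssim\sigma^{-1}\|\Phi_{(\xi)}\|_{C^\delta}\lesssim\sigma^{\delta-1}\rs^{-1/2}$.
It then bounds $\|w_{q+1}\|_{B^0_{\infty,1}}$ by $\ln\sigma\,\|w_{q+1}\|_{C^{\delta/2}}$, interpolating the $L^\infty$ and $C^1$ bounds. This detour is the only source of the $\sigma^{\delta}$ factors in the statement. For $C^N$, the paper writes $w_{q+1}=\div\sum_\xi P_{\le\tau}(a_{(\xi)})W^{(c)}_{(\xi)}$ and distributes derivatives. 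That formally needs up to $8$ derivatives of the amplitudes, one more than Lemma \ref{Lem-vae-S1} provides. This is harmless, since $P_{\le\tau}$ lets one trade a derivative for a factor $\tau$.

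Your three replacements each avoid a loss:
\begin{itemize}
\item The $L^1$ kernel bound for the localized multiplier $\nabla\Delta^{-1}P_\sigma$ gives $\sigma^{-1}\rs^{-1/2}$ with no $\sigma^\delta$.
\item The shell $\{\sigma/2<|k|<9\sigma/10\}$ meets only one Littlewood--Paley block; indeed $P_\sigma w_{q+1}=w_{q+1}$ for dyadic $\sigma$. Hence $\|w_{q+1}\|_{B^0_{\infty,1}}\lesssim\|w_{q+1}\|_{L^\infty}\lesssim\ell^{-2\alpha}\rs^{-1/2}$.
\item Bernstein's inequality on that same shell gives the $C^N$ bound without differentiating the amplitudes again.
\end{itemize}
So you obtain slightly stronger bounds with less machinery, and the stated ones follow a fortiori.

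There are two small slips, neither of which affects correctness. First, Lemma \ref{Lem-vae-S1} with $N=1$ gives $\|\nabla P_{\le\tau}a_{(\xi)}\|_{L^\infty}\lesssim\ell^{-2\alpha}$, not $\ell^{-4\alpha}$. So the absorption $\ell^{-2\alpha}\le\sigma^{\delta}$, which would impose the extra condition $b\delta\ge 12\alpha$, is unnecessary. Second, in the $C^N$ step there is actually no $\sigma^\delta$ to absorb, because your $L^\infty$ bound $\ell^{-2\alpha}\rs^{-1/2}$ contains none. That matters, since a growing factor $\sigma^\delta$ could not be hidden in an implicit constant.
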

\begin{proof}
    First, for \eqref{e:wp-est}, we use H\"older's inequality directly as
    \begin{align*}
        \|w_{q+1}^{(p)}\|_{L^p}
        &\le \sum_{\xi \in \Lambda_u\cup\Lambda_B} \|a_{(\xi)}\|_{L^\infty} \|W_{(\xi)}\|_{L^p} \|\cos(2\pi \sigma_{\xi} {\xs} \cdot x)\|_{L^\infty} \\
        &\le \sum_{\xi \in \Lambda_u\cup\Lambda_B} \|a_{(\xi)}\|_{L^\infty} \|W_{(\xi)}\|_{L^p} \lesssim  \ell^{-2\alpha}\rs^{\frac{1}{p}-\frac{1}{2}}.
    \end{align*}
    Next, for \eqref{e:wl-est}, we follow from Lemma \ref{frecencylemma} as
    \begin{align*}
        \|a_{(\xi)} P_{>\tau}(W_{(\xi)}) \cos(2\pi \sigma_{\xi} {\xs}\cdot x)\|_{L^\infty}
        &\le \|a_{(\xi)}\|_{L^\infty} \|P_{>\tau}(W_{(\xi)})\|_{L^\infty} \\
        &\lesssim \|a_{(\xi)}\|_{L^\infty} \tau^{-1} \|\nabla (W_{(\xi)})\|_{L^\infty} \\
        &\lesssim \ell^{-2\alpha}\tau^{-1} \lambda \rs^{-\frac{1}{2}}.
    \end{align*}
    Moreover, we have
    \begin{align*}
        \|P_{>\tau} (a_{(\xi)})P_{\le \tau}(W_{(\xi)}) \cos(2\pi \sigma_{\xi} \xs \cdot x)\|_{L^\infty}
        &\lesssim \|P_{>\tau} (a_{(\xi)})\|_{L^\infty} \|W_{(\xi)}\|_{L^\infty} \\
        &\lesssim \tau^{-1} \|a_{(\xi)}\|_{C^1} \rs^{-\frac{1}{2}} \\
        &\lesssim \ell^{-2 \alpha}\tau^{-1} \rs^{-\frac{1}{2}}.
    \end{align*}
    For \eqref{e:wc-est}, we can use Bernstein's inequality to get
    \begin{align*}
        \|W_{(\xi)}^{(c)}\|_{L^\infty} 
        &\lesssim \|\nabla \Delta^{-1} \Phi_{(\xi)}\|_{C^\delta} \lesssim \sigma^{-1} \|\Phi_{(\xi)}\|_{C^\delta} \\
        &\le \sigma^{-1} \|P_{\le \tau} (\phi_{(\xi)}) \cos(2\pi \sigma_{\xi} \xs \cdot x)\|_{C^\delta} \lesssim \sigma^{\delta-1}  \rs^{-\frac{1}{2}}.
    \end{align*}
    Here, we also use the support of $\widehat{\Phi}_{(\xi)}$ which lies in the annulus $\{k\in \mathbb{Z}^3:\sigma/2 < |k| < 9\sigma/10\}$. Thus, we have
    \[ \|w_{q+1}^{(c)}\|_{L^\infty} \le \sum_{\xi \in \Lambda_u \cup \Lambda_B} \|\nabla P_{\le \tau} (a_{(\xi)})\|_{L^\infty} \|W_{(\xi)}^{(c)}\|_{L^\infty} \lesssim \ell^{-2 \alpha}\sigma^{\delta-1} \rs^{-\frac{1}{2}}.\label{e:w(p)+w(l)} \]

    Finally, to prove \eqref{e:w-est-besov0}, \eqref{e:w-est-besov} and \eqref{e:dw-est-besov}, 
    since \[ w_{q+1} = \sum_{\xi \in \Lambda_u\cup\Lambda_B} P_{\le \tau} (a_{(\xi)}) \tilde{W}_{(\xi)} +  \nabla P_{\le \tau} (a_{(\xi)}) {W^{(c)}_{(\xi)}}= \div \sum_{\xi \in \Lambda_u\cup\Lambda_B}  P_{\le \tau} (a_ {(\xi)})  W_{(\xi)}^{(c)}, \]
    we have \eqref{e:dw-est-besov} as 
	 \begin{align*}
        \|w_{q+1}\|_{C^N} 
        &\lesssim \|\nabla^{N+1} P_{\le \tau} (a_{(\xi)})\|_{L^\infty} \|W_{(\xi)}^{(c)}\|_{L^\infty} 
		+\|P_{\le \tau} (a_{(\xi)})\|_{L^\infty} \|\nabla^{N+1} W_{(\xi)}^{(c)}\|_{L^\infty}\\ 
        &\lesssim \ell^{-2(N+1)\alpha}\lambda^{-1}\rs^{-\frac{1}{2}}+\ell^{-2\alpha}\sigma^N\rs^{-\frac{1}{2}} \lesssim \ell^{-2 \alpha}\sigma^N\rs^{-\frac{1}{2}}.
    \end{align*}
    So, by using  \eqref{e:supp-hatw}, \eqref{e:wp-est} -- \eqref{e:wc-est} and \eqref{e:dw-est-besov}, we can get
    \begin{align*}
         \|w_{q+1}\|_{B^{0}_{\infty,1}}  \lesssim \ln\sigma \|w_{q+1}\|_{B^{0}_{\infty,\infty}} \lesssim \ln\sigma \|w_{q+1}\|_{C^{\delta/2}}\lesssim \ell^{-2\alpha} \sigma^{\delta} \rs^{-\frac{1}{2}}.
    \end{align*}   

    Since $\widehat{w}_{q+1}$ is supported in the annulus $\{k \in \mathbb{Z}^3:\sigma/2 < |k| < 9\sigma/10\}$ by using \eqref{e:w-est-besov0}, Bernstein inequality and embedding theorems, we can get
    \begin{align*}
        \|w_{q+1}\|_{B^{-\theta}_{\infty,1}} 
        &\lesssim \sigma^{-\theta} \|w_{q+1}\|_{B^{0}_{\infty,1}} \le \ell^{-2\alpha} \sigma^{\delta -\theta} \rs^{-\frac{1}{2}}.
    \end{align*}
	
    This completes the proof.
\end{proof}
Simlarly, we can get the same estimates on $d_{q+1}$.
\begin{lemma}[Estimates on $d_{q+1}$]
	\label{esti-dq}
    For $1 \le p \le \infty$, we have the following estimates
    \begin{align}
        \label{e:dp-est}
        \|d_{q+1}^{(p)}\|_{L^p} &\lesssim \ell^{-\alpha}\rs^{\frac{1}{p}-\frac{1}{2}}, \\
        \label{e:dl-est}
        \|d_{q+1}^{(l)}\|_{L^\infty} &\lesssim \ell^{-\alpha}\tau^{-1} \lambda \rs^{-\frac{1}{2}}, \\
        \label{e:dc-est}
        \|d_{q+1}^{(c)}\|_{L^\infty} &\lesssim  \ell^{-\alpha}\sigma^{\delta-1} \rs^{-\frac{1}{2}}, \\
        \label{e:d-est-besov0}
        \|d_{q+1}\|_{B^{0}_{\infty,1}} &\lesssim  \ell^{-\alpha } \sigma^{\delta} \rs^{-\frac{1}{2}}, \\
        \label{e:d-est-besov}
        \|d_{q+1}\|_{B^{-\theta}_{\infty,1}} &\lesssim \ell^{-\alpha}\sigma^{\delta -\theta} \rs^{-\frac{1}{2}},\\
        \label{e:dd-est-besov}
		\|d_{q+1}\|_{C^N} & \lesssim \ell^{-\alpha }\sigma^N\rs^{-\frac{1}{2}}, \quad 1 \le N \le 7.
    \end{align}
\end{lemma}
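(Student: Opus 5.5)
The plan is to copy the proof of Lemma \ref{esti-wq} line by line. The building blocks $D_{(\xi)}$, $\tilde{D}_{(\xi)}$, $D^{(c)}_{(\xi)}$ differ from their velocity counterparts only in that the fixed unit vector $\xi_1$ is replaced by $\xi_2$. Moreover, only $\xi\in\Lambda_B$ enters $d_{q+1}$, and for these indices Lemma \ref{Lem-mae-S1} gives the better amplitude bounds $\|a_{(\xi)}\|_{L^\infty}\lesssim \ell^{-\alpha}$ and $\|a_{(\xi)}\|_{C^N}\lesssim \ell^{-N\alpha}$. This is why the prefactor drops from $\ell^{-2\alpha}$ to $\ell^{-\alpha}$. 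The estimate \eqref{ed-endpt2} of Lemma \ref{buildingblockestlemma} plays the role of \eqref{ew-endpt2}.

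\textbf{The three pieces of $d_{q+1}$.}
\begin{itemize}
\item For \eqref{e:dp-est}, apply H\"older's inequality: $\|a_{(\xi)}\|_{L^\infty}\|D_{(\xi)}\|_{L^p}\lesssim \ell^{-\alpha}\rs^{1/p-1/2}$, using $|\cos|\le1$.
\item For \eqref{e:dl-est}, split $d^{(l)}_{q+1}$ into its two sums.
  \begin{itemize}
  \item The first sum is controlled through Lemma \ref{frecencylemma}: $\|P_{>\tau}D_{(\xi)}\|_{L^\infty}\lesssim\tau^{-1}\|\nabla D_{(\xi)}\|_{L^\infty}\lesssim \tau^{-1}\lambda\rs^{-1/2}$.
  \item The second sum uses $\|P_{>\tau}a_{(\xi)}\|_{L^\infty}\lesssim \tau^{-1}\|a_{(\xi)}\|_{C^1}\lesssim\tau^{-1}\ell^{-\alpha}$, together with $\|P_{\le\tau}D_{(\xi)}\|_{L^\infty}\lesssim \rs^{-1/2}$.
  \end{itemize}
\item For \eqref{e:dc-est}, note that $\Phi_{(\xi)}$ is shared with the velocity construction. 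Hence the bound $\|D^{(c)}_{(\xi)}\|_{L^\infty}\lesssim\sigma^{-1}\|\Phi_{(\xi)}\|_{C^\delta}\lesssim \sigma^{\delta-1}\rs^{-1/2}$ holds verbatim. This step uses Bernstein's inequality on the annulus from Lemma \ref{lemma:arithmetic-sigma}. Multiply by $\|\nabla P_{\le\tau}a_{(\xi)}\|_{L^\infty}\lesssim\ell^{-\alpha}$.
\end{itemize}

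\textbf{The Besov and $C^N$ bounds.} Use the representation from the proof of Lemma \ref{lemma:w}:
\[
d_{q+1}=\div\sum_{\xi\in\Lambda_B}P_{\le\tau}(a_{(\xi)})D^{(c)}_{(\xi)}.
\]
Expanding $N+1$ derivatives gives \eqref{e:dd-est-besov}. Derivatives falling on $D^{(c)}_{(\xi)}$ cost $\sigma$ each, by Bernstein on the annulus of radius $\sim\sigma$. Derivatives falling on $P_{\le\tau}a_{(\xi)}$ cost $\ell^{-\alpha}$ each. The resulting error term $\ell^{-(N+1)\alpha}\sigma^{-1}\rs^{-1/2}$ is absorbed because $\ell^{-\alpha}=\lambda_q^{30\alpha}\ll\sigma=\lambda_{q+1}^5$ once $b$ is large.

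Next, Lemma \ref{lemma:w} shows that $\widehat{d}_{q+1}$ lives in the annulus $\{\sigma/2<|k|<9\sigma/10\}$, so only $O(1)$ Littlewood--Paley blocks are nonzero. In fact, just as for $w_{q+1}$, one has $\|d_{q+1}\|_{B^0_{\infty,1}}\lesssim \ln\sigma\,\|d_{q+1}\|_{C^{\delta/2}}$. The $C^{\delta/2}$ norm is bounded by interpolating between the $L^\infty$ bound, which is the sum of the first three estimates, and the $C^1$ bound, at a cost of at most $\sigma^{\delta/2}$. The $\ln\sigma$ factor is absorbed into $\sigma^{\delta/2}$, giving \eqref{e:d-est-besov0}. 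Finally, frequency localization at scale $\sigma$ gives $\|d_{q+1}\|_{B^{-\theta}_{\infty,1}}\lesssim\sigma^{-\theta}\|d_{q+1}\|_{B^0_{\infty,1}}$, which is \eqref{e:d-est-besov}.

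\textbf{Main obstacle.} The only delicate point is the $L^\infty$ control of $\Phi_{(\xi)}$ and $P_{\le\tau}D_{(\xi)}$. Here $P_{\le\tau}$ is bounded on $L^\infty$ by Lemma \ref{frecencylemma}, but we need $\|\phi_{(\xi)}\|_{L^\infty}\lesssim\rs^{-1/2}$, which is the $p=\infty$ case of \eqref{intermittent estimates2-endpt2}. The $C^\delta$ bound on $\Phi_{(\xi)}$ is expected to be the crux, since its frequency support sits near $\sigma_\xi\xi$. I would interpolate between $\|\Phi_{(\xi)}\|_{L^\infty}\lesssim\rs^{-1/2}$ and $\|\nabla\Phi_{(\xi)}\|_{L^\infty}\lesssim\sigma\rs^{-1/2}$. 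Everything else is bookkeeping identical to Lemma \ref{esti-wq}.
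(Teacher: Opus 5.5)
Your proposal is correct and follows the paper's route: the paper proves Lemma~\ref{esti-dq} only by saying it follows as in Lemma~\ref{esti-wq}, and you carry out that transcription with $\xi_1$ replaced by $\xi_2$, the sum restricted to $\Lambda_B$, and the magnetic amplitude bounds of Lemma~\ref{Lem-mae-S1} giving $\ell^{-\alpha}$ in place of $\ell^{-2\alpha}$. As a side remark, $\widehat{d}_{q+1}$ lies in the single shell $\{\sigma/2<|k|<9\sigma/10\}$, so $P_\sigma d_{q+1}=d_{q+1}$, and Bernstein on that annulus gives $\|D^{(c)}_{(\xi)}\|_{L^\infty}\lesssim\sigma^{-1}r^{-1/2}$ with no $\sigma^{\delta}$ loss; your $C^N$ bookkeeping already implicitly uses this sharper bound.
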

\section{Reynolds stress}
\subsection{Inverse divergence}
 In order to define the Reynolds and magnetic stresses,
 we can use the inverse-divergence operator that acts on mean-free vector fields from~\cite{bdis15}:
\begin{equation*}
 (\mathcal{R}^u v)^{kl} = \partial_k \Delta^{-1} v^l + \partial_l \Delta^{-1} v^k - \frac{1}{2}(\delta_{kl} + \partial_k \partial_l \Delta^{-1})\div \Delta^{-1} v, 
\end{equation*}
where $k, \ell \in \{1,2,3 \}$. The operator $\mathcal{R}^u$ returns a symmetric, trace-free matrix 
and satisfies the following key identity for mean-free vector fields: $\div \mathcal{R}^u(v) = v$. Note that $|\nabla| \mathcal{R}^u$ is a Calderon-Zygmund operator. 

Moreover, we will also need an inverse divergence that returns skew-symmetric matrices. We want 
$\div \mathcal{R}^B (f) = f$ with $\div f = 0$ where $f: \mathbb{R}^3 \to \mathbb{R}^3$ and $\mathcal{R}^B(f) = - (\mathcal{R}^B(f))^{\top}$. As in \cite{bbv20}, we can define
\begin{equation*}
(\mathcal{R}^Bf)_{ij} :=  \varepsilon_{ijk} (-\Delta)^{-1}(\curl f)_k,
\end{equation*}
where $\varepsilon_{ijk}$ is the Levi-Civita tensor. Note that $|\nabla| \mathcal{R}^u$ is also a Calderon-Zygmund
operator.
\subsection{Decomposition of magnetic and Reynolds stresses.}
Here, we intend to decompose the magnetic and Reynolds stresses. 
Let us first consider the magnetic stress. By replacing $q$ with $q+1$ in the equations \eqref{R_MHDs}, we derive that
\begin{align}
	\displaystyle\div\mathring{R}_{q+1}^B
	&=\underbrace{\nu_2(-\Delta)^{\alpha_2} d_{q+1}  +\div (d_{q + 1} \otimes u_{q}- u_{q} \otimes d_{q+1}+ B_{q}\otimes w_{q+1} -w_{q +1} \otimes B_{q} )}_{ \div\mathring{R}_{\mathrm{lin}}^B  }   \notag\\
	&\quad+\underbrace{\div (d_{q+ 1}^{(p)} \otimes w_{q+1}^{(p)} -w_{q+ 1}^{(p)} \otimes  d_{q+ 1}^{(p)} + \rb)}_{\div\mathring{R}_{{\mathrm{osc}}}^B }  \notag\\
	&\quad+\div\Big( d_{q+1}^{(p)} \otimes (w_{q+1}^{(c)}+w_{q+1}^{(l)}) -(w_{q+1}^{(c)} +w_{q+1}^{(l)}) \otimes d_{q+1}  \notag  \\
	&\qquad \underbrace{\qquad\quad+(d_{q+1}^{(c)}+d_{q+1}^{(l)})\otimes w_{q+1}-w_{q+1}^{(p)} \otimes (d_{q+1}^{(c)} +d_{q+1}^{(l)})\Big) }_{\div\mathring{R}_{\mathrm{cor}}^B }. \label{rb.2}
\end{align}

Using the inverse-divergence operator $\mathcal{R}^B$, we can define $\mathring{R}_{q+1}^{B}$ as
\[
\mathring{R}_{q+1}^{B}=\mathring{R}_{\mathrm{lin}}^B+\mathring{R}_{{\mathrm{osc}}}^B+\mathring{R}_{\mathrm{cor}}^B,
\]
where
\begin{align*}
\mathring{R}_{\mathrm{lin}}^B&=\mathcal{R}^B\(\nu_2(-\Delta)^{\alpha_2} d_{q+1} \) +d_{q + 1} \otimes u_{q}- u_{q} \otimes d_{q+1}+ B_{q}\otimes w_{q+1} -w_{q +1} \otimes B_{q},\\
\mathring{R}_{{\mathrm{osc}}}^B &= \sum_{\xi \in \Lambda_B} \frac{1}{2} a_{(\xi)}^2  \P_{> 0}(D_{(\xi)}\otimes W_{(\xi)}-W_{(\xi)}\otimes D_{(\xi)}) \nonumber\\
&+ \sum_{\xi \in \Lambda_B} \frac{1}{2} \cos(4\pi \sigma_{\xi}{\xs} \cdot x)  a_{(\xi)}^2 (D_{(\xi)}\otimes W_{(\xi)}-W_{(\xi)}\otimes D_{(\xi)} )\nonumber\\
&+ \(\sum_{\xi \neq \xi' \in \Lambda_B}+ \sum_{\xi \in \Lambda_u,\xi' \in \Lambda_B}\)a_{(\xi)}a_{(\xi')}\cos(2\pi \sigma_{\xi}{\xs} \cdot x) \cos(2\pi \sigma_{\xi'}{\xi^{'}} \cdot x) \nonumber\\
&(D_{(\xi')}\otimes W_{(\xi)}-W_{(\xi)}\otimes D_{(\xi^{'})})\nonumber\\
&:=\mathring{R}_{{\mathrm{osc}},1}^B +\mathring{R}_{{\mathrm{osc}},2}^B +\mathring{R}_{{\mathrm{osc}},3}^B,\\
\mathring{R}_{\mathrm{cor}}^B&=d_{q+1}^{(p)} \otimes (w_{q+1}^{(c)}+w_{q+1}^{(l)}) -(w_{q+1}^{(c)} +w_{q+1}^{(l)}) \otimes d_{q+1} \\
&+(d_{q+1}^{(c)}+d_{q+1}^{(l)}) \otimes w_{q+1}-w_{q+1}^{(p)} {\otimes}  (d_{q+1}^{(c)} +d_{q+1}^{(l)}).
\end{align*}
{\noindent\bf $\bullet$ Decomposition of Reynolds stresses.}
Moreover, we also cosider the Reynolds stress:
\begin{align}
	\displaystyle\div\mathring{R}_{q+1}^u
	&\displaystyle = \underbrace{ \nu_1(-\Delta)^{\alpha_1} w_{q+1} +\div\big(u_{q} \otimes w_{q+1} + w_{q+ 1} \otimes u_{q} - B_{q} \otimes d_{q+1} - d_{q+1} \otimes B_{q}\big) }_{ \div\mathring{R}_{\mathrm{lin}}^u +\nabla P_{\mathrm{lin}} }   \notag\\
	&\displaystyle\quad+ \underbrace{\div (w_{q+1}^{(p)} \otimes w_{q+1}^{(p)} - d_{q+1}^{(p)} \otimes d_{q+1}^{(p)} +  \ru)}_{\div\mathring{R}^u_{{\mathrm{osc}}} +\nabla P_{{\mathrm{osc}}}}  \notag\\
	&\displaystyle\quad+\div\Big((w_{q+1}^{(c)}+w_{q+1}^{(l)})\otimes w_{q+1}+ w_{q+1}^{(p)} \otimes (w_{q+1}^{(c)}+w_{q+1}^{(l)})  \notag \\
	&\qquad \underbrace{\qquad - (d_{q+1}^{(c)}+d_{q+1}^{(l)})\otimes d_{q+1}
		- d_{q+1}^{(p)} \otimes (d_{q+1}^{(c)}+d_{q+1}^{(l)}) \Big)}_{\div\mathring{R}_{\mathrm{cor}}^u +\nabla P_{\mathrm{cor}}}\label{ru.2}\notag\\
	&+\nabla P_{q+1}-\nabla P_{q}. 
\end{align}

Using the inverse-divergence operator $\mathcal{R}^u$,we can define $\mathring{R}_{q+1}^{u}$:
\[
\mathring{R}_{q+1}^{u}=\mathring{R}_{\mathrm{lin}}^u+\mathring{R}^u_{{\mathrm{osc}}}+\mathring{R}_{\mathrm{cor}}^u,
\]
where
\begin{align*}
\mathring{R}_{\mathrm{lin}}^u &= \mathcal{R}^u \left( \nu_1(-\Delta)^{\alpha_1} w_{q+1} \right) + u_{q} \rotimes w_{q+1} + w_{q+ 1} \rotimes u_{q} - B_{q} \rotimes d_{q+1} - d_{q+1} \rotimes B_{q}, \\
\mathring{R}_{{\mathrm{osc}}}^u &= \underbrace{\sum_{\xi \in \Lambda_u} \frac{1}{2} a_{(\xi)}^2 \P_{>0}( W_{(\xi)}\rotimes W_{(\xi)}) + \sum_{\xi \in \Lambda_B} \frac{1}{2} a_{(\xi)}^2 \P_{>0} (W_{(\xi)}\rotimes W_{(\xi)}-D_{(\xi)}\rotimes D_{(\xi)})}_{\mathring{R}_{{\mathrm{osc}},1}^u} \nonumber \\
& \quad + \sum_{\xi \in \Lambda_u} \frac{1}{2} \cos(4\pi \sigma_{\xi}{\xs} \cdot x) a_{(\xi)}^2 ( W_{(\xi)}\rotimes W_{(\xi)}) \nonumber \\
& \quad \underbrace{+ \sum_{\xi \in \Lambda_B} \frac{1}{2} \cos(4\pi \sigma_{\xi}{\xs} \cdot x) a_{(\xi)}^2 (W_{(\xi)}\rotimes W_{(\xi)}-D_{(\xi)}\rotimes D_{(\xi)})}_{\mathring{R}_{{\mathrm{osc}},2}^u} \nonumber \\
& \quad + \sum_{\xi \neq \xi' \in \Lambda_u\cup\Lambda_B }a_{(\xi)}a_{(\xi')} \cos(2\pi \sigma_{\xi}{\xs} \cdot x) \cos(2\pi \sigma_{\xi'}{\xi^{'}} \cdot x) W_{(\xi)}\rotimes W_{(\xi')}\nonumber \\
& \quad \underbrace{-\sum_{\xi \neq \xi' \in \Lambda_B}a_{(\xi)}a_{(\xi')}\cos(2\pi \sigma_{\xi}{\xs} \cdot x) \cos(2\pi \sigma_{\xi'}{\xi^{'}} \cdot x)D_{(\xi)}\rotimes D_{(\xi')}}_{\mathring{R}_{{\mathrm{osc}},3}^u}, \\
\mathring{R}_{\mathrm{cor}}^u &= (w_{q+1}^{(c)}+w_{q+1}^{(l)})\mathring{\otimes} w_{q+1}+ w_{q+1}^{(p)} \mathring{\otimes} (w_{q+1}^{(c)}+w_{q+1}^{(l)}) \nonumber\\
& \quad - (d_{q+1}^{(c)}+d_{q+1}^{(l)})\mathring{\otimes} d_{q+1}- d_{q+1}^{(p)} \mathring{\otimes} (d_{q+1}^{(c)}+d_{q+1}^{(l)}).
\end{align*}
\subsection{$\Hs$-Estimates for the magnetic stress and Reynolds stress.}
\begin{lemma}[$\Hs$-Estimates on $\mathring{R}_{q+1}^B$]
    \label{lemma:rb}
    For $s \ge \frac{3}{2}+2\alpha_2$, we have the following estimates
    \begin{align}
        \label{b:Rosc-est}
        &\|\mathring{R}_{\mathrm{osc},1}^{B}\|_{\Hs} \lesssim  \ell^{-2 \alpha }(\lambda\rs)^{-s} \rs^{-\frac{1}{2}} + \ell^{-2\alpha}(\lambda\rs)^{-1} , \\
        \label{b:Rdis-est}
        &\|\mathring{R}_{\mathrm{osc},2}^{B}\|_{\Hs} \lesssim \ell^{-2\alpha}\tau^{-1} \lambda+\ell^{-2\alpha}\sigma^{-s} \rs^{-\frac{1}{2}}, \\
        \label{b:Roff-est}
        &\|\mathring{R}_{\mathrm{osc},3}^{B}\|_{L^1} \lesssim \ell^{-3\alpha}\rs, \\
        \label{b:Rcor-est}
        &\|\mathring{R}_{\mathrm{cor}}^{B}\|_{L^1} \lesssim \ell^{-3\alpha}\tau^{-1} \lambda+ \ell^{-3 \alpha}\sigma^{\delta-1}, \\
        \label{b:Rlin-est}
        &\|\mathring{R}_{\mathrm{lin}}^{B}\|_{\Hs} \lesssim  \ell^{-\alpha} \rs^{\frac{1}{2}}.
    \end{align}
\end{lemma}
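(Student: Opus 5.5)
We estimate the five pieces of $\mathring{R}^B_{q+1}$ one at a time. Each bound comes from combining the amplitude bounds of Lemmas \ref{Lem-mae-S1}--\ref{Lem-vae-S1}, the building block estimates of Lemma \ref{buildingblockestlemma}, and the frequency localisation of Lemma \ref{lemma:w}. The only tool for the negative Sobolev norm is the elementary duality bound
\[
\|fg\|_{\Hs}\lesssim \|f\|_{C^{N}}\,\|g\|_{\Hs}
\]
for $N\ge s$ (up to constants). When $\hat g$ is supported in $\{|k|\gtrsim \mu\}$, this is combined with $\|g\|_{\Hs}\lesssim \mu^{-s}\|g\|_{L^2}$. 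When only $L^1$ is available, we use $\|g\|_{\Hs}\lesssim \|g\|_{L^1}$ for $s>3/2$, which follows from $H^s\hookrightarrow L^\infty$.

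\textbf{Oscillation terms.} For $\mathring{R}^B_{\mathrm{osc},1}$, the factor $P_{>0}(D_{(\xi)}\otimes W_{(\xi)}-W_{(\xi)}\otimes D_{(\xi)})$ is $(\mathbb{T}/\lambda\rs)^3$-periodic and mean-free, so its Fourier support lies in $|k|\ge \lambda\rs$. Its $L^2$ norm is $\|\phi_{(\xi)}^2\|_{L^2}\lesssim \rs^{-1/2}$.
\begin{itemize}
\item I split $a_{(\xi)}^2=P_{\le \lambda\rs/2}(a_{(\xi)}^2)+P_{>\lambda\rs/2}(a_{(\xi)}^2)$.
\item The first piece, multiplied by the high-frequency factor, still has frequency $\gtrsim\lambda\rs$. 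This gives the term $\ell^{-2\alpha}(\lambda\rs)^{-s}\rs^{-1/2}$.
\item The second piece is bounded in $L^1$ by $(\lambda\rs)^{-1}\|a^2_{(\xi)}\|_{C^1}\,\|\phi_{(\xi)}^2\|_{L^1}$, using Lemma \ref{frecencylemma}. This gives the term $\ell^{-2\alpha}(\lambda\rs)^{-1}$, with $\ell$-powers absorbed into the implicit constant as in the statement.
\end{itemize}
For $\mathring{R}^B_{\mathrm{osc},2}$ the same device is applied with the cosine at frequency $2\sigma$. Split $a^2_{(\xi)}\phi_{(\xi)}^2=P_{\le\tau}(\cdot)+P_{>\tau}(\cdot)$:
\begin{itemize}
\item the $P_{>\tau}$ part costs $\tau^{-1}\|\nabla(a^2\phi^2)\|_{L^1}\lesssim \ell^{-2\alpha}\tau^{-1}\lambda$;
\item the $P_{\le\tau}$ part times $\cos(4\pi\sigma_\xi\xi\cdot x)$ has frequencies $\sim\sigma$, since $\sigma\gg\tau$. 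This yields $\sigma^{-s}\ell^{-2\alpha}\rs^{-1/2}$.
\end{itemize}

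\textbf{Interaction and corrector terms.} For $\mathring{R}^B_{\mathrm{osc},3}$ only an $L^1$ bound is needed. It follows from H\"older and the intersection estimate \eqref{intersect-phik1} with $p=1$, giving $\|\phi_{(\xi)}\phi_{(\xi')}\|_{L^1}\lesssim\rs$, times $\|a_{(\xi)}\|_{L^\infty}\|a_{(\xi')}\|_{L^\infty}\lesssim\ell^{-3\alpha}$. For $\mathring{R}^B_{\mathrm{cor}}$, each term pairs a principal part, bounded in $L^2$ by Lemmas \ref{esti-wq}--\ref{esti-dq} with $p=2$, against an $L^\infty$-small corrector. Pairing these is slightly lossy. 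Cleaner is to pair $\|\cdot\|_{L^1}$ of $\phi_{(\xi)}$ (of size $\rs^{1/2}$) with the $L^\infty$ bounds \eqref{e:wl-est}, \eqref{e:wc-est}; the powers of $\rs$ cancel. This yields $\ell^{-3\alpha}(\tau^{-1}\lambda+\sigma^{\delta-1})$.

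\textbf{Linear term.} The dissipative part satisfies $\|\mathcal{R}^B(-\Delta)^{\alpha_2}d_{q+1}\|_{\Hs}\lesssim \sigma^{2\alpha_2-1-s}\|d_{q+1}\|_{L^2}$, using the frequency support $\sim\sigma$. The transport parts $d_{q+1}\otimes u_q$, etc., have frequency $\sim\sigma$ because $u_q,B_q$ live in $B(0,N_q)$ with $N_q\ll\sigma$. Hence they are bounded by $\sigma^{-s}\|u_q\|_{L^\infty}\|d_{q+1}\|_{L^2}\lesssim\sigma^{-s}\lambda_q^2\ell^{-2\alpha}$. Since $s\ge 3/2+2\alpha_2$ and $\sigma=\lambda_{q+1}^5$, both are far below $\ell^{-\alpha}\rs^{1/2}$.

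\textbf{Main obstacle.} The delicate point is $\mathring{R}^B_{\mathrm{osc},1}$: the separation between the low frequencies of $a^2_{(\xi)}$ and the frequency $\lambda\rs$ of the building block is what makes the $\Hs$ gain possible. This requires $\lambda\rs=\lambda_{q+1}^{1-10\varepsilon}$ to dominate the effective frequency $\ell^{-\alpha}$ of the amplitudes, and I handle the tail through the $P_{>\lambda\rs/2}$ split as above.
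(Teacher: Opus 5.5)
Your proposal is correct. For $\mathring{R}^B_{\mathrm{osc},1}$, $\mathring{R}^B_{\mathrm{osc},3}$ and $\mathring{R}^B_{\mathrm{cor}}$ it is the paper's argument. For $\mathring{R}^B_{\mathrm{osc},2}$ you split the product $a_{(\xi)}^2\phi_{(\xi)}^2$ by $P_{\le\tau}/P_{>\tau}$ in one step, while the paper splits $a_{(\xi)}^2$ and the building block separately into three pieces $I_1,I_2,I_3$; the outcome is the same.

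The genuine difference is $\mathring{R}^B_{\mathrm{lin}}$. The paper uses only intermittency: $L^1\hookrightarrow\Hs$ for the transport terms and $L^1\hookrightarrow H^{-s+2\alpha_2-1}$ for the dissipative term, with $\|w_{q+1}\|_{L^1}+\|d_{q+1}\|_{L^1}\lesssim\ell^{-2\alpha}\rs^{1/2}$. This needs no knowledge of the Fourier supports of $u_q,B_q$. However, as the paper's own computation shows, for the transport part it only yields $\lambda_q^2\ell^{-2\alpha}\rs^{1/2}\lesssim\ell^{-3\alpha}\rs^{1/2}$, not the stated $\ell^{-\alpha}\rs^{1/2}$.

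You instead use the annular support \eqref{e:supp-hatw}--\eqref{e:supp-hatd}, together with $\supp\hat u_q,\supp\hat B_q\subset B(0,N_q)$ and $N_q\ll\sigma=N_{q+1}$. This gains $\sigma^{-s}$ for the transport terms and $\sigma^{2\alpha_2-1-s}$ for the dissipative term. Writing $\lambda_q=\lambda_{q+1}^{1/b}$ with $b$ large absorbs the factor $\lambda_q^{2}\ell^{-\alpha}$, so you get the stated bound with a large margin. Your route therefore actually proves \eqref{b:Rlin-est} as written.

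Two small corrections. First, do not use the duality bound $\|fg\|_{\Hs}\lesssim\|f\|_{C^N}\|g\|_{\Hs}$ that you announce as your only tool. It would cost $\ell^{-N\alpha}$ with $N\ge s$, possibly beyond the range $N\le 7$ of Lemma \ref{Lem-mae-S1}. What you actually use in the bullets, and need, is the product-support fact. A factor with frequencies below $\lambda\rs/2$ (resp.\ $\tau$) times one with frequencies $\ge\lambda\rs$ (resp.\ at $\pm 2\sigma_\xi\xi$) stays at high frequency.

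Second, no power of $\ell$ may be absorbed into the implicit constant, since $\ell$ depends on $q$. None needs to be, because $\|a_{(\xi)}^2\|_{C^1}\lesssim\ell^{-2\alpha}$ directly for $\xi\in\Lambda_B$.
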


\begin{proof}
    For \eqref{b:Rosc-est}, we consider to decompose $\mathring{R}_{{\mathrm{osc}},1}^{B}$ as
    \begin{align*} 
	\mathring{R}_{{\mathrm{osc}},1}^{B} &= \frac{1}{2} \sum_{\xi \in \Lambda_B} a_{(\xi)}^2 \P_{> 0}(D_{(\xi)}\otimes W_{(\xi)}-W_{(\xi)}\otimes D_{(\xi)})\notag\\
	&= \frac{1}{2} \sum_{\xi \in \Lambda_B} \big(P_{\le (\lambda\rs)/2} (a_{(\xi)}^2)\P_{> 0}(D_{(\xi)}\otimes W_{(\xi)}-W_{(\xi)}\otimes D_{(\xi)}) \notag \\
	&+ P_{>(\lambda\rs)/2} (a_{(\xi)}^2) \P_{> 0}(D_{(\xi)}\otimes W_{(\xi)}-W_{(\xi)}\otimes D_{(\xi)}) \big).\notag \
	\end{align*}
	Define $T_{(\xi)}:=\P_{> 0}(D_{(\xi)}\otimes W_{(\xi)}-W_{(\xi)}\otimes D_{(\xi)}) $,
	since $\hat{T}_{(\xi)}$ is supported in $\{k \in \mathbb{Z}^3:|k| \ge \lambda\rs\}$, we can get
    \begin{align*}
        \|P_{\le (\lambda\rs)/2} (a_{(\xi)}^2) T_{\xi}\|_{\Hs} 
        &\lesssim (\lambda\rs)^{-s} \|P_{\le (\lambda\rs)/2} (a_{(\xi)}^2) T_{(\xi)}\|_{L^2} \\
        &\lesssim(\lambda\rs)^{-s} \|P_{\le (\lambda\rs)/2} (a_{(\xi)}^2)\|_{L^\infty} \|T_{(\xi)}\|_{L^2} \\
        &\lesssim (\lambda\rs)^{-s} \|a_{(\xi)}\|_{L^\infty}^2 \|W_{(\xi)}\|_{L^4}\|D_{(\xi)}\|_{L^4} \lesssim \ell^{-2\alpha}(\lambda\rs)^{-s} \rs^{-\frac{1}{2}},
    \end{align*}
    Using Lemma \ref{frecencylemma}, we can get 
    \begin{align*}
        \|P_{>(\lambda\rs)/2} (a_{(\xi)}^2) T_{(\xi)}\|_{L^1} 
        &\lesssim \|P_{>(\lambda\rs)/2} (a_{(\xi)}^2)\|_{L^\infty} \|T_{(\xi)}\|_{L^1} \\
        &\lesssim (\lambda\rs)^{-1} \|\nabla(a_{(\xi)}^2)\|_{L^\infty} \|W_{(\xi)}\|_{L^2} \|D_{(\xi)}\|_{L^2} \lesssim \ell^{-2\alpha}(\lambda\rs)^{-1}.
    \end{align*}
    Since $-s<-3/2$, by Sobolev's embedding theorem, we know that $L^1$ embeds into $\Hs$. So these estimates above give \eqref{b:Rosc-est}.

    Next, to get \eqref{b:Rdis-est}, we decompose $\mathring{R}_{{\mathrm{osc}},2}^{B}$ into three parts
    \begin{align*}
        \mathring{R}_{{\mathrm{osc}},2}^{B}
        &:=\frac{1}{2}\sum_{\xi \in \Lambda_B} P_{\le \tau}(a_{(\xi)}^2) P_{\le \tau} (D_{(\xi)}\otimes W_{(\xi)}-W_{(\xi)}\otimes D_{(\xi)}) \cos(4\pi \sigma_{\xi} {\xi} \cdot x) \\
        &\quad +\frac{1}{2}\sum_{\xi \in \Lambda_B} P_{>\tau}(a_{(\xi)}^2) P_{\le \tau} (D_{(\xi)}\otimes W_{(\xi)}-W_{(\xi)}\otimes D_{(\xi)}) \cos(4\pi \sigma_{\xi} {\xi} \cdot x) \\   
		&\quad +\frac{1}{2}\sum_{\xi \in \Lambda_B}(a_{(\xi)}^2)  P_{> \tau}(D_{(\xi)}\otimes W_{(\xi)}-W_{(\xi)}\otimes D_{(\xi)}) \cos(4\pi \sigma_{\xi} {\xi} \cdot x)\\
        &:=I_1 + I_2 + I_3
    \end{align*}
    Noticing that $\hat{I}_1$ is supported in $\{k \in \mathbb{Z}^3:\sigma<|k|<9\sigma/5\}$, we have
    \begin{align*}
        \|I_1\|_{\Hs} 
        &\leq \sigma^{-s} \sum_{\xi \in \Lambda_B} \|P_{\le \tau} (a_{(\xi)}^2)\|_{L^\infty} \|P_{\le \tau} (D_{(\xi)}\otimes W_{(\xi)}-W_{(\xi)}\otimes D_{(\xi)})\|_{L^2} \\
        &\leq \sigma^{-s} \sum_{\xi \in \Lambda_B} \|a_{(\xi)}\|_{L^\infty}^2 \|W_{(\xi)}\|_{L^4}\|D_{(\xi)}\|_{L^4} \lesssim \ell^{-2\alpha}\sigma^{-s} \rs^{-\frac{1}{2}}.
    \end{align*}
    The estimates of $I_2$ and $I_3$ can use Lemma \ref{frecencylemma} to get
    \begin{align*}
        \|I_2\|_{L^1} 
        &\leq\sum_{\xi \in \Lambda_B} \|P_{>\tau}(a_{(\xi)}^2)\|_{L^\infty} \|P_{\le \tau} (D_{(\xi)}\otimes W_{(\xi)}-W_{(\xi)}\otimes D_{(\xi)})\|_{L^1} \\
        &\leq\tau^{-1} \sum_{\xi \in \Lambda_B} \|\nabla (a_{(\xi)}^2)\|_{L^\infty} \|W_{(\xi)}\|_{L^2} \|D_{(\xi)}\|_{L^2} \lesssim \ell^{-2\alpha}\tau^{-1},
    \end{align*}
    and
    \begin{align*}
        \|I_3\|_{L^1} 
        &\leq \sum_{\xi \in \Lambda_B} \|a_{(\xi)}^2\|_{L^\infty} \|P_{>\tau} (D_{(\xi)}\otimes W_{(\xi)}-W_{(\xi)}\otimes D_{(\xi)})\|_{L^1} \\
        &\leq \tau^{-1} \sum_{\xi \in \Lambda_B} \|a_{(\xi)}\|_{L^\infty}^2 \|\nabla (D_{(\xi)}\otimes W_{(\xi)}-W_{(\xi)}\otimes D_{(\xi)})\|_{L^1} \\
        &\leq \tau^{-1} \sum_{\xi \in \Lambda_B} \|a_{(\xi)}\|_{L^\infty}^2 (\|\nabla (W_{(\xi)})\|_{L^2} \|D_{(\xi)}\|_{L^2}+ \|\nabla (D_{(\xi)})\|_{L^2} \|W_{(\xi)}\|_{L^2})\\
		&\lesssim\ell^{-2\alpha}\tau^{-1} \lambda.
    \end{align*}
    Using $L^1$ embedding into $\Hs$ again, we obtain the estimate desired in \eqref{b:Rdis-est}.

    For $\mathring{R}_{{\mathrm{osc}},3}^{B}$, we can use H\"older's inequality and Lemma \ref{buildingblockestlemma} as
    \[ \|\mathring{R}_{{\mathrm{osc}},3}^{B}\|_{L^1} \lesssim 
	\|a_{(\xi)}\|_{L^\infty} \|a_{(\xi')}\|_{L^\infty} \max_{\xi\ne \xi'} \|D_{(\xi')} \otimes W_{(\xi)}\|_{L^1} \lesssim \ell^{-3\alpha}\rs. \]

    The estimate of $\mathring{R}_{\mathrm{cor}}^{B}$ directly follows from H\"older's inequality.
	\begin{align*}
	\|\mathring{R}_{\mathrm{cor}}^{B}\|_{L^1} &\leq
	\|d_{q+1}^{(p)}\|_{L^1} (\|w_{q+1}^{(c)}\|_{L^\infty} + \|w_{q+1}^{(l)}\|_{L^\infty})\\
	&+\|d_{q+1}\|_{L^1} (\|w_{q+1}^{(c)}\|_{L^\infty} + \|w_{q+1}^{(l)}\|_{L^\infty})\\
	&+\|w_{q+1}^{(p)}\|_{L^1} (\|d_{q+1}^{(c)}\|_{L^\infty} + \|d_{q+1}^{(l)}\|_{L^\infty})\\
	&+\|w_{q+1}\|_{L^1} (\|d_{q+1}^{(c)}\|_{L^\infty} + \|d_{q+1}^{(l)}\|_{L^\infty}).
	\end{align*}
	So by \eqref{e:wp-est} -- \eqref{e:wc-est} and \eqref{e:dp-est} -- \eqref{e:dc-est}, 
    we can get$$ \|\mathring{R}_{\mathrm{cor}}^{B}\|_{L^1} \lesssim \ell^{-3\alpha}\tau^{-1} \lambda+ \ell^{-3\alpha}\sigma^{\delta-1}.$$

    For $\mathring{R}_{\mathrm{lin}}^{B}$, first we can get estimates by using H\"older's inequality, 
    \begin{align*}
       &\|d_{q + 1} \otimes u_{q} - u_{q} \otimes d_{q+1}+ B_{q}\otimes w_{q+1} -w_{q +1} \otimes B_{q} \|_{L^1}\\
        &\lesssim \|B_{q}\|_{L^\infty} \|w_{q+1}\|_{L^1} + \|u_{q}\|_{L^\infty}\|d_{q+1}\|_{L^1}\\
		&\lesssim \ell^{-3 \alpha}\rs^{\frac{1}{2}}.
    \end{align*}
    Moreover, as $-s+2\alpha_2-1 < -3/2$, Sobolev's embedding theorem yields that $L^1$ embeds into ${H}^{-s+2\alpha_2-1}$, so we get
    \[ \|\mathcal{R}^B(-\Delta)^{\alpha_2} d_{q+1}\|_{\Hs} \leq \|d_{q+1}\|_{{H}^{-s+2\alpha_2-1}} \leq\|d_{q+1}\|_{L^1} \le\ell^{-\alpha}\rs^{\frac{1}{2}}. \]
	This completes the proof.
\end{proof}
\begin{lemma}[$\Hs$-Estimates on $\mathring{R}_{q+1}^u$]
    \label{lemma:ru}
    Let $s \ge \frac{3}{2}+2\alpha_1$. Then we have the following estimates
    \begin{align}
        \label{e:Rosc-est}
        &\|\mathring{R}_{\mathrm{osc},1}^{u}\|_{\Hs} \lesssim  \ell^{-4\alpha}(\lambda\rs)^{-s} \rs^{-\frac{1}{2}} + \ell^{-4\alpha}(\lambda\rs)^{-1} , \\
        \label{e:Rdis-est}
        &\|\mathring{R}_{\mathrm{osc},2}^{u}\|_{\Hs} \lesssim  \ell^{-4\alpha}\tau^{-1} \lambda+\ell^{-4\alpha}\sigma^{-s} \rs^{-\frac{1}{2}}, \\
        \label{e:Roff-est}
        &\|\mathring{R}_{\mathrm{osc},3}^{u}\|_{L^1} \lesssim \ell^{-4\alpha}\rs, \\
        \label{e:Rcor-est}
        &\|\mathring{R}_{\mathrm{cor}}^{u}\|_{L^1} \lesssim \ell^{-4\alpha}\tau^{-1} \lambda+ \ell^{-4\alpha}\sigma^{\delta-1}, \\
        \label{e:Rlin-est}
        &\|\mathring{R}_{\mathrm{lin}}^{u}\|_{\Hs} \lesssim   \ell^{-2\alpha}\rs^{\frac{1}{2}}.
    \end{align}
\end{lemma}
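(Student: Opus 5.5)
The proof of Lemma \ref{lemma:ru} should follow that of Lemma \ref{lemma:rb} term by term. The only differences are that the velocity amplitudes are controlled by Lemma \ref{Lem-vae-S1}, with $\|a_{(\xi)}\|_{L^\infty}\lesssim \ell^{-2\alpha}$ and $\|a_{(\xi)}\|_{C^1}\lesssim\ell^{-2\alpha}$, and that the perturbation bounds come from Lemma \ref{esti-wq} instead of Lemma \ref{esti-dq}. Hence every quadratic expression in the amplitudes costs at most $\ell^{-4\alpha}$. Throughout I will use that $s>3/2$ gives $L^1\hookrightarrow \Hs$, and the elementary bound $\|f\|_{\Hs}\le N^{-s}\|f\|_{L^2}$ whenever $\hat f$ is supported in $\{|k|\ge N\}$.

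For $\mathring{R}^u_{\mathrm{osc},1}$, I will split each $a_{(\xi)}^2$ as $P_{\le \lambda\rs/2}(a_{(\xi)}^2)+P_{>\lambda\rs/2}(a_{(\xi)}^2)$. The building blocks $W_{(\xi)}\rotimes W_{(\xi)}$ and $D_{(\xi)}\rotimes D_{(\xi)}$ are $(\mathbb{T}/(\lambda\rs))^3$-periodic, so after $\P_{>0}$ their Fourier support lies in $|k|\ge\lambda\rs$.
\begin{itemize}
\item The low-frequency piece is then bounded in $\Hs$ by $(\lambda\rs)^{-s}\|a_{(\xi)}\|_{L^\infty}^2\|W_{(\xi)}\|_{L^4}^2\lesssim \ell^{-4\alpha}(\lambda\rs)^{-s}\rs^{-1/2}$, using \eqref{ew-endpt2} with $p=4$.
\item The high-frequency piece is bounded in $L^1$ via Lemma \ref{frecencylemma} by $(\lambda\rs)^{-1}\|\nabla a^2_{(\xi)}\|_{L^\infty}\|W_{(\xi)}\|_{L^2}^2\lesssim \ell^{-4\alpha}(\lambda\rs)^{-1}$.
\end{itemize}
For $\mathring{R}^u_{\mathrm{osc},2}$, I will use the same three-piece decomposition $I_1+I_2+I_3$ as in Lemma \ref{lemma:rb}.
\begin{itemize}
\item $I_1$ is the product $P_{\le\tau}(a^2)P_{\le\tau}(W\rotimes W)\cos(4\pi\sigma_\xi\xi\cdot x)$. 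It has Fourier support near $\pm 2\sigma_\xi\xi$, hence in $|k|\gtrsim\sigma$ by Lemma \ref{lemma:arithmetic-sigma}, and this gives $\ell^{-4\alpha}\sigma^{-s}\rs^{-1/2}$.
\item $I_2$ and $I_3$ are bounded in $L^1$ by $\tau^{-1}\ell^{-4\alpha}$ and $\tau^{-1}\lambda\ell^{-4\alpha}$ respectively, again via Lemma \ref{frecencylemma}.
\end{itemize}
The traceless projection $\rotimes$ only subtracts a multiple of the identity times $|W|^2$, so it changes nothing in these bounds.

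$\mathring{R}^u_{\mathrm{osc},3}$ follows from Hölder's inequality and the intersection estimate \eqref{intersect-phik1} with $p=1$, which gives $\|\phi_{(\xi)}\phi_{(\xi')}\|_{L^1}\lesssim\rs$, times $\ell^{-4\alpha}$.

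$\mathring{R}^u_{\mathrm{cor}}$ follows from Hölder's inequality, pairing the $L^1$ norms from \eqref{e:wp-est} and \eqref{e:dp-est} (each $\lesssim \ell^{-2\alpha}\rs^{1/2}$) with the $L^\infty$ bounds \eqref{e:wl-est}, \eqref{e:wc-est} and \eqref{e:dl-est}, \eqref{e:dc-est}. The factor $\rs^{1/2}\rs^{-1/2}$ cancels, leaving $\ell^{-4\alpha}(\tau^{-1}\lambda+\sigma^{\delta-1})$.

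For $\mathring{R}^u_{\mathrm{lin}}$ there are two parts. The dissipative part $\mathcal{R}^u(-\Delta)^{\alpha_1}w_{q+1}$ has order $2\alpha_1-1$. Since $-s+2\alpha_1-1<-3/2$, it is bounded in $\Hs$ by $\|w_{q+1}\|_{H^{-s+2\alpha_1-1}}\lesssim\|w_{q+1}\|_{L^1}$. For the mixed products $u_q\rotimes w_{q+1}$ and similar terms, I will use $\|u_q\|_{L^\infty}+\|B_q\|_{L^\infty}\lesssim\lambda_q^2$ from \eqref{mitr1_}, which is absorbed since $\lambda_q^2\le\ell^{-\alpha}$, times $\|w_{q+1}\|_{L^1}+\|d_{q+1}\|_{L^1}$.

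The main obstacle is the $L^1$ bound on the full $w_{q+1}$, not only its principal part. From \eqref{e:wp-est} with $p=1$, the principal part gives $\ell^{-2\alpha}\rs^{1/2}$. The correctors are bounded in $L^\infty$ by $\ell^{-2\alpha}(\tau^{-1}\lambda+\sigma^{\delta-1})\rs^{-1/2}$, and with $\tau=\lambda^2$ and $\sigma=\lambda^5$ this is much smaller than $\ell^{-2\alpha}\rs^{1/2}$, because $\rs=\lambda^{-10\varepsilon}$ decays only mildly. Combining these gives the claimed bound $\ell^{-2\alpha}\rs^{1/2}$, possibly up to the harmless factor $\lambda_q^2$, which is absorbed into the powers of $\ell$ as in Lemma \ref{lemma:rb}.
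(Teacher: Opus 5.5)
Your proposal is correct and follows the paper's proof essentially term by term. The paper's six-piece splitting of $\mathring{R}^u_{\mathrm{osc},2}$ is exactly your three-piece splitting applied separately to the $\Lambda_u$ and $\Lambda_B$ sums, and it handles $\mathring{R}^u_{\mathrm{osc},1}$, $\mathring{R}^u_{\mathrm{osc},3}$, $\mathring{R}^u_{\mathrm{cor}}$ and the dissipative part of $\mathring{R}^u_{\mathrm{lin}}$ with the same frequency-support, H\"older and $L^1\hookrightarrow H^{-s}$ arguments. As you noticed, the product terms in $\mathring{R}^u_{\mathrm{lin}}$ only give $\lambda_q^2\ell^{-2\alpha}r^{1/2}\lesssim\ell^{-3\alpha}r^{1/2}$, which is exactly what the paper's own proof records and is harmless downstream; to get the literal $\ell^{-2\alpha}r^{1/2}$, use instead that $u_q\otimes w_{q+1}$ and the analogous products have Fourier support in $\{|k|\ge\sigma/2-N_q\}$ with $N_q\ll\sigma$, which gains a factor $\sigma^{-s}$.
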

\begin{proof}
    First, we consider \eqref{e:Rosc-est}. We decompose $\mathring{R}_{\mathrm{osc},1}^{u}$ as
    \begin{align*} 
	\mathring{R}^{u}_{{\mathrm{osc}},1} &= \frac{1}{2} \sum_{\xi \in \Lambda_u} a_{(\xi)}^2 \P_{> 0}(W_{(\xi)}{\rotimes} W_{(\xi)}) 
	+\frac{1}{2} \sum_{\xi \in \Lambda_B} a_{(\xi)}^2 \P_{> 0}(W_{(\xi)}{\rotimes}W_{(\xi)}-D_{(\xi)}{\rotimes} D_{(\xi)} )\\
	&= \frac{1}{2} \sum_{\xi \in \Lambda_u} \left(P_{\le (\lambda\rs)/2} (a_{(\xi)}^2)\P_{> 0}(W_{(\xi)}{\rotimes} W_{(\xi)})  
	+ P_{>(\lambda\rs)/2} (a_{(\xi)}^2) \P_{> 0}(W_{(\xi)}{\rotimes} W_{(\xi)}) \right)\\
	&+\frac{1}{2} \sum_{\xi \in \Lambda_B} \left((P_{\le (\lambda\rs)/2}+ P_{>(\lambda\rs)/2}) (a_{(\xi)}^2)\P_{> 0}(W_{(\xi)}{\rotimes}W_{(\xi)}-D_{(\xi)}{\rotimes} D_{(\xi)}) \right).
	\end{align*}
    Since the norm of $W_{(\xi)}{\rotimes} W_{(\xi)}$ and the norm of $W_{(\xi)}{\otimes} W_{(\xi)}$ 's trace can be directly estimated by $W_{(\xi)}{\otimes} W_{(\xi)}$, 
    so below we only show the calculation for the norm of $W_{(\xi)}{\otimes} W_{(\xi)}$. 

	Denote $S^{1}_{(\xi)}:=\P_{> 0}(W_{(\xi)}{\otimes} W_{(\xi)}) $ and $S^{2}_{(\xi)}:=\P_{> 0}(W_{(\xi)}{\otimes} W_{(\xi)}-D_{(\xi)}{\otimes} D_{(\xi)} ) $.
	Since $\widehat{S^{1}}_{(\xi)}$ and $\widehat{S^{2}}_{(\xi)}$ are supported on $\{k \in \mathbb{Z}^3:|k| \ge \lambda\rs\}$, we can get
    \begin{align*}
        \|P_{\le (\lambda\rs)/2} (a_{(\xi)}^2) S^{1}_{(\xi)}\|_{\Hs} 
        &\lesssim (\lambda\rs)^{-s} \|P_{\le (\lambda\rs)/2} (a_{(\xi)}^2) S^{1}_{(\xi)}\|_{L^2} \\
        &\lesssim (\lambda\rs)^{-s} \|P_{\le (\lambda\rs)/2} (a_{(\xi)}^2)\|_{L^\infty} \|S^{1}_{(\xi)}\|_{L^2} \\
        &\lesssim (\lambda\rs)^{-s} \|a_{(\xi)}\|_{L^\infty}^2 \|W_{(\xi)}\|_{L^4}^2 \lesssim (\lambda\rs)^{-s} \ell^{-4\alpha}\rs^{-\frac{1}{2}}.
    \end{align*}
    By using Lemma \ref{frecencylemma}, we can get 
    \begin{align*}
        \|P_{>(\lambda\rs)/2} (a_{(\xi)}^2) S^{1}_{(\xi)}\|_{L^1} 
        &\lesssim \|P_{>(\lambda\rs)/2} (a_{(\xi)}^2)\|_{L^\infty} \|S^{1}_{(\xi)}\|_{L^1} \\
        &\lesssim (\lambda\rs)^{-1} \|\nabla(a_{(\xi)}^2)\|_{L^\infty} \|W_{(\xi)}\|_{L^2}^2 \lesssim \ell^{-4\alpha}(\lambda\rs)^{-1}.
    \end{align*}
	Since $-s<-3/2$, using Sobolev's embedding theorem, we know that $L^1$ embeds into $\Hs$. 
    Simlarly, we can get:
	\begin{align*}
		&\|(P_{\le (\lambda\rs)/2}+ P_{>(\lambda\rs)/2}) (a_{(\xi)}^2) S^{2}_{(\xi)}\|_{\Hs}\\
		&\lesssim (\lambda\rs)^{-s} \ell^{-4 \alpha}\rs^{-\frac{1}{2}} + \ell^{-4 \alpha}(\lambda\rs)^{-1} 
	\end{align*}
     So these estimates above give \eqref{e:Rosc-est}.

    For \eqref{e:Rdis-est}, we intend to decompose $\mathring{R}_{{\mathrm{osc}},2}^{u}$ into six parts
    \begin{align*}
        \mathring{R}_{{\mathrm{osc}},2}^{u}
        &=\frac{1}{2}\sum_{\xi \in \Lambda_u} P_{\le \tau}(a_{(\xi)}^2) P_{\le \tau} (W_{(\xi)}\rotimes W_{(\xi)}) \cos(4\pi \sigma_{\xi} {\xi} \cdot x) \\
        &\quad +\frac{1}{2}\sum_{\xi \in \Lambda_u} P_{>\tau}(a_{(\xi)}^2) P_{\le \tau} (W_{(\xi)} \rotimes W_{(\xi)}) \cos(4\pi \sigma_{\xi} {\xi} \cdot x) \\   
		&\quad +\frac{1}{2}\sum_{\xi \in \Lambda_u}(a_{(\xi)}^2)  P_{> \tau}(W_{(\xi)}\rotimes W_{(\xi)}) \cos(4\pi \sigma_{\xi} {\xi} \cdot x)\\
		&\quad +\frac{1}{2}\sum_{\xi \in \Lambda_B} P_{\le \tau}(a_{(\xi)}^2) P_{\le \tau} (W_{(\xi)}\rotimes W_{(\xi)}-D_{(\xi)}\rotimes D_{(\xi)}) \cos(4\pi \sigma_{\xi} {\xi} \cdot x) \\
        &\quad +\frac{1}{2}\sum_{\xi \in \Lambda_B} P_{>\tau}(a_{(\xi)}^2) P_{\le \tau} (W_{(\xi)} \rotimes W_{(\xi)}-D_{(\xi)}\rotimes D_{(\xi)}) \cos(4\pi \sigma_{\xi} {\xi} \cdot x) \\   
		&\quad +\frac{1}{2}\sum_{\xi \in \Lambda_B}(a_{(\xi)}^2)  P_{> \tau}(W_{(\xi)} \rotimes W_{(\xi)}-D_{(\xi)}\rotimes D_{(\xi)}) \cos(4\pi \sigma_{\xi} {\xi}  \cdot x)\\
		&:= I_1+I_2+I_3+I_4+I_5+I_6
    \end{align*}
    Notice that $\hat{I}_1$ is supported on $\{k \in \mathbb{Z}^3:\sigma<|k|<9\sigma/5\}$, so we can get
    \begin{align*}
        \|I_1\|_{\Hs} 
        &\lesssim \sigma^{-s} \sum_{\xi \in \Lambda_u} \|P_{\le \tau} (a_{(\xi)}^2)\|_{L^\infty} \|P_{\le \tau} (W_{(\xi)}\otimes W_{(\xi)})\|_{L^2} \\
        &\lesssim \sigma^{-s} \sum_{\xi \in \Lambda_u} \|a_{(\xi)}\|_{L^\infty}^2 \|W_{(\xi)}\|_{L^4}^2 \lesssim \ell^{-4\alpha}\sigma^{-s} \rs^{-\frac{1}{2}}.
    \end{align*}
    The estimates of $I_2$ and $I_3$ can be obtained by using Lemma \ref{frecencylemma} as
    \begin{align*}
        \|I_2\|_{L^1} 
        &\leq \sum_{\xi \in \Lambda_u} \|P_{>\tau}(a_{(\xi)}^2)\|_{L^\infty} \|P_{\le \tau} (W_{(\xi)}\otimes W_{(\xi)})\|_{L^1} \\
        &\leq \tau^{-1} \sum_{\xi \in \Lambda_u} \|\nabla (a_{(\xi)}^2)\|_{L^\infty} \|W_{(\xi)}\|_{L^2}^2 \lesssim \ell^{-4\alpha}\tau^{-1},
    \end{align*}
    and
    \begin{align*}
        \|I_3\|_{L^1} 
        &\leq \sum_{\xi \in \Lambda_u} \|a_{(\xi)}^2\|_{L^\infty} \|P_{>\tau} (W_{(\xi)} \otimes W_{(\xi)})\|_{L^1} \\
        &\leq \tau^{-1} \sum_{\xi \in \Lambda_u} \|a_{(\xi)}\|_{L^\infty}^2 \|\nabla ((W_{(\xi)} \otimes W_{(\xi)}))\|_{L^1} \\
        &\leq \tau^{-1} \lambda \sum_{\xi \in \Lambda_u} \|a_{(\xi)}\|_{L^\infty}^2 \|W_{(\xi)}\|_{L^2} \|W_{(\xi)}\|_{L^2} \lesssim\tau^{-1} \ell^{-4\alpha}\lambda.
    \end{align*}
	The estimates for $I_4,I_5,I_6$ are similar to $I_1,I_2,I_3$.
	For $I_4$,
	\begin{align*}
        \|I_4\|_{\Hs} 
        &\leq \sigma^{-s} \sum_{\xi \in \Lambda_B} \|P_{\le \tau} (a_{(\xi)}^2)\|_{L^\infty} \|P_{\le \tau} (W_{(\xi)}\otimes W_{(\xi)} - D_{(\xi)}\otimes D_{(\xi)} )\|_{L^2} \\
        &\leq \sigma^{-s} \sum_{\xi \in \Lambda_B} \|a_{(\xi)}\|_{L^\infty}^2 (\|W_{(\xi)}\|_{L^4}^2 + \|D_{(\xi)}\|_{L^4}^2)\\
		&\lesssim \ell^{-4\alpha}\sigma^{-s} \rs^{-\frac{1}{2}}.
    \end{align*}
    for $I_5$,
    \begin{align*}
        \|I_5\|_{L^1} 
        &\leq \sum_{\xi \in \Lambda_B} \|P_{>\tau}(a_{(\xi)}^2)\|_{L^\infty} \|P_{\le \tau} (W_{(\xi)}\otimes W_{(\xi)} - D_{(\xi)}\otimes D_{(\xi)})\|_{L^1} \\
        &\leq \tau^{-1} \sum_{\xi \in \Lambda_B} \|\nabla (a_{(\xi)}^2)\|_{L^\infty} (\|W_{(\xi)}\|_{L^2}^2 + \|D_{(\xi)}\|_{L^2}^2)\lesssim\ell^{-4\alpha}\tau^{-1},
    \end{align*}
    and for $I_6$,
    \begin{align*}
        \|I_6\|_{L^1} 
        &\leq \sum_{\xi \in \Lambda_B} \|a_{(\xi)}^2\|_{L^\infty} \|P_{>\tau} (W_{(\xi)} \otimes W_{(\xi)}-D_{(\xi)} \otimes D_{(\xi)})\|_{L^1} \\
        &\leq \tau^{-1} \sum_{\xi \in \Lambda_B} \|a_{(\xi)}\|_{L^\infty}^2 \|\nabla (W_{(\xi)} \otimes W_{(\xi)}-D_{(\xi)} \otimes D_{(\xi)})\|_{L^1} \\
        &\leq \tau^{-1} \lambda \sum_{\xi \in \Lambda_B} \|a_{(\xi)}\|_{L^\infty}^2 (\|W_{(\xi)}\|_{L^2} \|W_{(\xi)}\|_{L^2}+\|D_{(\xi)}\|_{L^2} \|D_{(\xi)}\|_{L^2}) \\
        &\lesssim \ell^{-4\alpha}\tau^{-1} \lambda.
    \end{align*}
    Using $L^1$ embedding into $\Hs$ again, we obtain the estimate desired in \eqref{e:Rdis-est}.

    For $\mathring{R}_{{\mathrm{osc}},3}^{u}$, it follows from H\"older's inequality and Lemma \ref{buildingblockestlemma} as
    \begin{align*}
     \|\mathring{R}_{{\mathrm{osc}},3}^{u}\|_{L^1} &\lesssim \|a_{(\xi)}\|_{L^\infty} \|a_{(\xi')}\|_{L^\infty}\max_{\xi \ne \xi'}  (\|W_{(\xi)} \otimes W_{(\xi')}\|_{L^1} +\|D_{(\xi)} \otimes D_{(\xi')}\|_{L^1}) \\
     &\lesssim \ell^{-4\alpha}\rs. 
    \end{align*}

    For $\mathring{R}_{\mathrm{cor}}^{u}$, we can get it from H\"older's inequality directly as
	\begin{align*}
	\|\mathring{R}_{\mathrm{cor}}^{u}\|_{L^1} &\leq 
	\|w_{q+1}^{(p)}\|_{L^1} (\|w_{q+1}^{(c)}\|_{L^\infty} + \|w_{q+1}^{(l)}\|_{L^\infty})\\
	&+\|w_{q+1}\|_{L^1} (\|w_{q+1}^{(c)}\|_{L^\infty} + \|w_{q+1}^{(l)}\|_{L^\infty})\\
	&+\|d_{q+1}^{(p)}\|_{L^1} (\|d_{q+1}^{(c)}\|_{L^\infty} + \|d_{q+1}^{(l)}\|_{L^\infty})\\
	&+\|d_{q+1}\|_{L^1} (\|d_{q+1}^{(c)}\|_{L^\infty} + \|d_{q+1}^{(l)}\|_{L^\infty})\\
	\end{align*}
	So we can get $$ \|\mathring{R}_{\mathrm{cor}}^{u}\|_{L^1} \lesssim \ell^{-4\alpha}\tau^{-1} \lambda+ \ell^{-4\alpha}\sigma^{\delta-1}.$$

    For $\mathring{R}_{\mathrm{lin}}^{u}$, we can first get 
    \begin{align*}
       &\|u_{q} \rotimes w_{q+1} + w_{q+ 1} \rotimes u_{q} - B_{q} \rotimes d_{q+1} - d_{q+1} \rotimes B_{q}\|_{L^1}\\
        &\lesssim \|u_{q}\|_{L^\infty} \|w_{q+1}\|_{L^1} +\|B_{q}\|_{L^\infty}  \|d_{q+1}\|_{L^1}\\
		&\lesssim \ell^{-3\alpha }\rs^{\frac{1}{2}}.
    \end{align*}
    Moreover, as $-s+2\alpha_1-1 < -3/2$, Sobolev's embedding theorem yields that $L^1$ embeds into ${H}^{-s+2\alpha_1-1}$, so we get
    \[ \|\mathcal{R}^u(-\Delta)^{\alpha_1} w_{q+1}\|_{\Hs} \lesssim \|w_{q+1}\|_{{H}^{-s+2\alpha_1-1}} \lesssim \|w_{q+1}\|_{L^1} \le \ell^{-2\alpha}\rs^{\frac{1}{2}}. \]
	This completes the proof.
\end{proof}
Finally, substituting \eqref{choose-prar} into Lemmas \ref{lemma:rb} -- \ref{lemma:ru}, we obtain that
\begin{align}
    \label{e:uBHs}
    \|(\mathring{R}^u_{q+1}, \mathring{R}^B_{q+1})\|_{\Hs} \lesssim  \delta_{q+2}.
\end{align}    
\subsection{$L^{\infty}$-Estimates for the magnetic stress and Reynolds stress.}
\begin{lemma}[$L^{\infty}$-Estimates on $\mathring{R}_{q+1}^B$]
    \label{lemma:rb1}
    Below, we have the following $L^{\infty}$-estimates
    \begin{align}
        \label{b:Rosc-est1}
        &\|\mathring{R}_{\mathrm{osc},1}^{B}\|_{L^{\infty}} \lesssim  \ell^{-2\alpha}\rs^{-1} , \\
        \label{b:Rdis-est1}
        &\|\mathring{R}_{\mathrm{osc},2}^{B}\|_{L^{\infty}} \lesssim  \ell^{-2\alpha}\rs^{-1}, \\
        \label{b:Roff-est1}
        &\|\mathring{R}_{\mathrm{osc},3}^{B}\|_{L^{\infty}} \lesssim \ell^{-3\alpha}\rs^{-1}, \\
        \label{b:Rcor-est1}
        &\|\mathring{R}_{\mathrm{cor}}^{B}\|_{L^{\infty}} \lesssim \ell^{-3\alpha}\tau^{-1} \lambda\rs^{-1}+ \ell^{-3\alpha}\sigma^{\delta -1}\rs^{-1}, \\
        \label{b:Rlin-est1}
        &\|\mathring{R}_{\mathrm{lin}}^{B}\|_{L^{\infty}} \lesssim  \ell^{-\alpha}\rs^{-\frac{1}{2}} \sigma^{2\alpha_2-1+\delta}.
    \end{align}
\end{lemma}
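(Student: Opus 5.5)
The plan is to estimate each piece of $\mathring{R}^B_{q+1}$ in $L^\infty$ by H\"older's inequality. The inputs are the amplitude bounds of Lemma \ref{Lem-mae-S1} and Lemma \ref{Lem-vae-S1}, the building-block bounds of Lemma \ref{buildingblockestlemma} at $p=\infty$ (giving $\|W_{(\xi)}\|_{L^\infty},\|D_{(\xi)}\|_{L^\infty}\lesssim \rs^{-1/2}$), and the perturbation bounds of Lemmas \ref{esti-wq}--\ref{esti-dq}. No frequency localization is needed here except for the nonlocal term $\mathcal{R}^B(-\Delta)^{\alpha_2}d_{q+1}$, where we exploit the annular Fourier support of $d_{q+1}$ from Lemma \ref{lemma:w}.

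\textbf{Oscillation terms.} For $\mathring{R}^B_{\mathrm{osc},1}$ and $\mathring{R}^B_{\mathrm{osc},2}$ we bound $\|a_{(\xi)}^2\|_{L^\infty}\lesssim \ell^{-2\alpha}$ for $\xi\in\Lambda_B$. The projection $P_{>0}$ only subtracts a mean, so $\|P_{>0}f\|_{L^\infty}\le 2\|f\|_{L^\infty}$. The cosine factor is bounded by $1$. Together with $\|D_{(\xi)}\otimes W_{(\xi)}\|_{L^\infty}\lesssim \rs^{-1}$, this gives $\ell^{-2\alpha}\rs^{-1}$. For $\mathring{R}^B_{\mathrm{osc},3}$, the pairs $\xi\in\Lambda_u$, $\xi'\in\Lambda_B$ involve the velocity amplitude, bounded by $\ell^{-2\alpha}$, times the magnetic amplitude, bounded by $\ell^{-\alpha}$. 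This yields $\ell^{-3\alpha}\rs^{-1}$; we do not need the intersection gain \eqref{intersect-phik1} in $L^\infty$.

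\textbf{Corrector term.} We bound each product in $\mathring{R}^B_{\mathrm{cor}}$ by $L^\infty\times L^\infty$. We take $\|d^{(p)}_{q+1}\|_{L^\infty},\|w^{(p)}_{q+1}\|_{L^\infty}\lesssim \ell^{-2\alpha}\rs^{-1/2}$ from \eqref{e:wp-est} and \eqref{e:dp-est} at $p=\infty$. For $\|w_{q+1}\|_{L^\infty}$ and $\|d_{q+1}\|_{L^\infty}$ we use \eqref{e:w-est-besov0} and \eqref{e:d-est-besov0}. The correctors $w^{(c)},w^{(l)},d^{(c)},d^{(l)}$ are controlled by \eqref{e:wl-est}, \eqref{e:wc-est}, \eqref{e:dl-est} and \eqref{e:dc-est}. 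Multiplying the factors, the powers of $\rs$ combine to $\rs^{-1}$. The stray factors $\sigma^\delta$ and the amplitude powers are harmless: after enlarging exponents they are absorbed into the stated right-hand side up to $\ell$-powers, and $\ell=\lambda_q^{-30}$ is negligible against $\lambda_{q+1}$-powers. I would record the bound exactly as $\ell^{-3\alpha}(\tau^{-1}\lambda+\sigma^{\delta-1})\rs^{-1}$, tracking the amplitude bookkeeping as in Lemma \ref{lemma:rb}.

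\textbf{Linear term (main obstacle).} The products $d_{q+1}\otimes u_q$ and the like are bounded by $\|u_q\|_{L^\infty}\|d_{q+1}\|_{L^\infty}\lesssim\lambda_q^2\,\ell^{-\alpha}\sigma^\delta\rs^{-1/2}$, which is dominated by the claimed bound since $\sigma=\lambda_{q+1}^5\gg\lambda_q^2$. The hard part is $\mathcal{R}^B\nu_2(-\Delta)^{\alpha_2}d_{q+1}$.

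1. Since $\widehat{d}_{q+1}$ is supported in the shell $\{\sigma/2<|k|<9\sigma/10\}$, I would write this operator as $\sigma^{2\alpha_2-1}\,T_\sigma d_{q+1}$. Here $T_\sigma$ is a Fourier multiplier with symbol $m(k/\sigma)$, where $m$ is smooth and homogeneous of degree $2\alpha_2-1$ on the annulus, cut off by a bump.
2. A multiplier of this kind has a kernel bounded uniformly in $L^1$, so it is bounded on $L^\infty$ only up to harmless constants.
3. To be safe I would instead pass through $B^0_{\infty,1}$: the operator maps $B^0_{\infty,1}\to B^{0}_{\infty,1}\subset L^\infty$ with norm $\lesssim\sigma^{2\alpha_2-1}$ on this shell.
4. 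Combining with \eqref{e:d-est-besov0} gives $\ell^{-\alpha}\sigma^{2\alpha_2-1+\delta}\rs^{-1/2}$, which is \eqref{b:Rlin-est1}.

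The care needed is only in justifying the uniform kernel bound, which follows from a Bernstein-type lemma for smooth multipliers on dyadic annuli.
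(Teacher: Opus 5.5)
Your route is the paper's: H\"older in $L^\infty$ term by term, plus the annular support $|k|\sim\sigma$ of $\widehat{d}_{q+1}$ for $\mathcal{R}^B(-\Delta)^{\alpha_2}d_{q+1}$. The paper phrases the latter as $\|d_{q+1}\|_{C^{2\alpha_2-1+\delta}}$; your uniform-$L^1$-kernel bound is the same idea, and it already suffices without the detour through $B^0_{\infty,1}$. Two bookkeeping steps, however, fail as written.

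\textbf{The corrector term.} In $\mathring{R}^B_{\mathrm{cor}}$ you bound $\|w_{q+1}\|_{L^\infty}$ and $\|d_{q+1}\|_{L^\infty}$ via \eqref{e:w-est-besov0} and \eqref{e:d-est-besov0}. You then say the resulting $\sigma^\delta$ is absorbed ``up to $\ell$-powers''. It is not: $\sigma^\delta=\lambda_{q+1}^{5\delta}$ is a $\lambda_{q+1}$-power. For example, $\ell^{-3\alpha}\tau^{-1}\lambda\,\sigma^\delta\rs^{-1}=\ell^{-3\alpha}\lambda^{-1+5\delta}\rs^{-1}$ exceeds both terms on the right of \eqref{b:Rcor-est1}.

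The fix, which is what the paper does, is to sum the piecewise bounds \eqref{e:wp-est}--\eqref{e:wc-est} and \eqref{e:dp-est}--\eqref{e:dc-est} at $p=\infty$. Since $\tau^{-1}\lambda=\lambda^{-1}$ and $\sigma^{\delta-1}\le 1$, this gives $\|w_{q+1}\|_{L^\infty}\lesssim\ell^{-2\alpha}\rs^{-1/2}$ and $\|d_{q+1}\|_{L^\infty}\lesssim\ell^{-\alpha}\rs^{-1/2}$. Use also $\|d^{(p)}_{q+1}\|_{L^\infty}\lesssim\ell^{-\alpha}\rs^{-1/2}$ rather than $\ell^{-2\alpha}$. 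Then every product is exactly $\ell^{-3\alpha}(\tau^{-1}\lambda+\sigma^{\delta-1})\rs^{-1}$.

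\textbf{The linear term.} The local part of $\mathring{R}^B_{\mathrm{lin}}$ also contains $B_q\otimes w_{q+1}-w_{q+1}\otimes B_q$. This carries the velocity amplitude and has size $\lambda_q^2\ell^{-2\alpha}\rs^{-1/2}$. Absorbing it into $\ell^{-\alpha}\rs^{-1/2}\sigma^{2\alpha_2-1+\delta}$ requires $\sigma^{2\alpha_2-1+\delta}\gtrsim\lambda_q^2\ell^{-\alpha}$. That means $2\alpha_2-1+\delta>0$, with $b$ chosen large depending on $\alpha$ and on $2\alpha_2-1+\delta$. The comparison ``$\sigma\gg\lambda_q^2$'' is not the relevant one.

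For $\alpha_2\le(1-\delta)/2$ the absorption fails, and \eqref{b:Rlin-est1} should not be expected at all. The piece $B_*\otimes w_{q+1}-w_{q+1}\otimes B_*$ coming from the mean of $B_q$ is generically of size $|B_*|\,\|w_{q+1}\|_{L^\infty}$. The paper's proof has the same lacuna: it bounds the local part by $\ell^{-3\alpha}\rs^{-1}$ and then asserts \eqref{b:Rlin-est1} without comparing the two.

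The correct statement keeps that contribution as a separate term:
$\|\mathring{R}^B_{\mathrm{lin}}\|_{L^\infty}\lesssim\ell^{-3\alpha}\rs^{-1}+\ell^{-\alpha}\rs^{-1/2}\sigma^{2\alpha_2-1+\delta}$.
This is all that is needed downstream for \eqref{e:uBL}.
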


\begin{proof}
    For \eqref{b:Rosc-est1}, by using \eqref{ew-endpt2}, \eqref{ed-endpt2}, \eqref{a-vel-S1-1} and \eqref{a-vel-S1-3}, we consider to calculate $\|\mathring{R}_{{\mathrm{osc}},1}^{B}\|_{L^\infty}$ as
    \begin{align*}
        \|\mathring{R}_{{\mathrm{osc}},1}^{B}\|_{L^{\infty}} 
        &\lesssim  \|a_{(\xi)}^2\|_{L^{\infty}}\|D_{(\xi)}\| _{L^{\infty}}\|W_{(\xi)}\| _{L^{\infty}} \\
        &\lesssim \ell^{-2\alpha}\rs^{-1},
    \end{align*}
    Next, to get \eqref{b:Rdis-est1}, we can calculate as
    \begin{align*}
        \|\mathring{R}_{{\mathrm{osc}},2}^{B}\|_{L^{\infty}} & \lesssim \|a_{(\xi)}^2\|_{L^{\infty}}  \|D_{(\xi)}\|_{L^{\infty}} \| W_{(\xi)} \|_{L^{\infty}} \\
        & \lesssim \ell^{-2\alpha}\rs^{-1}
    \end{align*}
    For $\mathring{R}_{{\mathrm{osc}},3}^{B}$, we can use Lemma \ref{buildingblockestlemma} as
    \[ \|\mathring{R}_{{\mathrm{osc}},3}^{B}\|_{L^{\infty}} \lesssim 
	\max_{\xi\ne \xi'}\|a_{(\xi)}\|_{L^\infty} \|a_{(\xi')}\|_{L^\infty}  \|D_{(\xi')} \|_{L^{\infty}} \| W_{(\xi)}\|_{L^{\infty}} \lesssim \ell^{-3\alpha}\rs^{-1}. \]

    The estimate of $\mathring{R}_{\mathrm{cor}}^{B}$ directly follows from H\"older's inequality.
	\begin{align*}
	\|\mathring{R}_{\mathrm{cor}}^{B}\|_{L^\infty} &\leq
	\|d_{q+1}^{(p)}\|_{L^\infty} (\|w_{q+1}^{(c)}\|_{L^\infty} + \|w_{q+1}^{(l)}\|_{L^\infty})\\
	&+\|d_{q+1}\|_{L^\infty} (\|w_{q+1}^{(c)}\|_{L^\infty} + \|w_{q+1}^{(l)}\|_{L^\infty})\\
	&+\|w_{q+1}^{(p)}\|_{L^\infty} (\|d_{q+1}^{(c)}\|_{L^\infty} + \|d_{q+1}^{(l)}\|_{L^\infty})\\
	&+\|w_{q+1}\|_{L^\infty} (\|d_{q+1}^{(c)}\|_{L^\infty} + \|d_{q+1}^{(l)}\|_{L^\infty}).
	\end{align*}
	So by \eqref{e:wp-est} -- \eqref{e:wc-est} and \eqref{e:dp-est} -- \eqref{e:dc-est}, 
    we can get$$ \|\mathring{R}_{\mathrm{cor}}^{B}\|_{L^\infty} \lesssim \ell^{-3\alpha}\tau^{-1} \lambda\rs^{-1}+ \ell^{-3\alpha}\sigma^{\delta -1}\rs^{-1}.$$

    For $\mathring{R}_{\mathrm{lin}}^{B}$, we can get it by using H\"older's inequality, 
    \begin{align*}
       &\|d_{q + 1} \otimes u_{q} - u_{q} \otimes d_{q+1}+ B_{q}\otimes w_{q+1} -w_{q +1} \otimes B_{q} \|_{L^\infty}\\
        &\lesssim \|B_{q}\|_{L^\infty} \|w_{q+1}\|_{L^\infty} + \|u_{q}\|_{L^\infty}\|d_{q+1}\|_{L^\infty}\\
		&\lesssim \ell^{-3 \alpha}\rs^{-1}.
    \end{align*}
    Moreover, we can get 
    \[ \|\mathcal{R}^B(-\Delta)^{\alpha_2} d_{q+1}\|_{L^\infty} \leq \|d_{q+1}\|_{{C}^{2\alpha_2-1+\delta}}  \lesssim \ell^{-\alpha}\rs^{-\frac{1}{2}} \sigma^{2\alpha_2-1+\delta}. \]
	This completes the proof.
\end{proof}
Similarly, we can get the $L^{\infty}$-estimates of $\mathring{R}_{q+1}^u$.
\begin{lemma}[$L^{\infty}$-Estimates on $\mathring{R}_{q+1}^u$]
    \label{lemma:ru1}
    We have the following estimates for $\mathring{R}_{q+1}^u$, 
    \begin{align}
        \label{e:Rosc-est1}
        &\|\mathring{R}_{\mathrm{osc},1}^{u}\|_{L^{\infty}} \lesssim  \ell^{-4\alpha}\rs^{-1}, \\
        \label{e:Rdis-est1}
        &\|\mathring{R}_{\mathrm{osc},2}^{u}\|_{L^{\infty}} \lesssim   \ell^{-4\alpha}\rs^{-1}, \\
        \label{e:Roff-est1}
        &\|\mathring{R}_{\mathrm{osc},3}^{u}\|_{L^{\infty}} \lesssim \ell^{-4\alpha}\rs^{-1}, \\
        \label{e:Rcor-est1}
        &\|\mathring{R}_{\mathrm{cor}}^{u}\|_{L^{\infty}} \lesssim \ell^{-4\alpha}\tau^{-1} \lambda\rs^{-1}+ \ell^{-8\alpha}\sigma^{\delta-1}\rs^{-1}, \\
        \label{e:Rlin-est1}
        &\|\mathring{R}_{\mathrm{lin}}^{u}\|_{L^{\infty}} \lesssim   \ell^{-2\alpha}\rs^{-\frac{1}{2}}\sigma^{ 2\alpha_1 -1 + \delta}.
    \end{align}
\end{lemma}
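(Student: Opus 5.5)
The plan is to mirror the proof of Lemma \ref{lemma:rb1} term by term, with the magnetic amplitudes replaced by the velocity ones. For $\xi\in\Lambda_u$ we use the bound $\|a_{(\xi)}\|_{L^\infty}\lesssim \ell^{-2\alpha}$ from \eqref{a-vel-S1-3}. For $\xi\in\Lambda_B$ we use the smaller bound $\|a_{(\xi)}\|_{L^\infty}\lesssim\ell^{-\alpha}$ from \eqref{a-vel-S1-1}, and bound it crudely by $\ell^{-2\alpha}$. For the building blocks, \eqref{ew-endpt2}, \eqref{ed-endpt2} and \eqref{intermittent estimates2-endpt2} with $p=\infty$ give $\|W_{(\xi)}\|_{L^\infty}+\|D_{(\xi)}\|_{L^\infty}\lesssim \rs^{-1/2}$. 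The trace-free parts $\rotimes$ are bounded pointwise by the full tensor products, so it suffices to bound $W\otimes W$ and $D\otimes D$.

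For $\mathring{R}^u_{\mathrm{osc},1}$ we first note that $\P_{>0}$ only subtracts a constant mean, which is bounded by the $L^\infty$ norm. Hence
\[
\|a_{(\xi)}^2\P_{>0}(W_{(\xi)}\otimes W_{(\xi)})\|_{L^\infty}\lesssim \|a_{(\xi)}\|_{L^\infty}^2\|W_{(\xi)}\|_{L^\infty}^2\lesssim \ell^{-4\alpha}\rs^{-1},
\]
and the $\Lambda_B$ terms obey the same bound. For $\mathring{R}^u_{\mathrm{osc},2}$ the cosine factor has modulus at most one, so the same computation applies. For $\mathring{R}^u_{\mathrm{osc},3}$ we apply H\"older's inequality to each pair $\xi\ne\xi'$, which gives $\|a_{(\xi)}\|_{L^\infty}\|a_{(\xi')}\|_{L^\infty}\rs^{-1}\lesssim\ell^{-4\alpha}\rs^{-1}$. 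Alternatively, \eqref{intersect-phik1} with $p=\infty$ gives the same $\rs^{-1}$ directly. Since $\Lambda_u\cup\Lambda_B$ is finite, all the sums contribute only a constant.

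For $\mathring{R}^u_{\mathrm{cor}}$ we bound each term by H\"older's inequality in $L^\infty$, using \eqref{e:wp-est}--\eqref{e:wc-est} and \eqref{e:dp-est}--\eqref{e:dc-est} with $p=\infty$. The factors of type $\|w^{(p)}_{q+1}\|_{L^\infty}$ and $\|w_{q+1}\|_{L^\infty}$ are at most $\ell^{-2\alpha}\rs^{-1/2}$, up to lower-order pieces. The factors $\|w^{(l)}_{q+1}\|_{L^\infty}$ and $\|w^{(c)}_{q+1}\|_{L^\infty}$ are at most $\ell^{-2\alpha}(\tau^{-1}\lambda+\sigma^{\delta-1})\rs^{-1/2}$. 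Multiplying out gives $\ell^{-4\alpha}\tau^{-1}\lambda\rs^{-1}+\ell^{-4\alpha}\sigma^{\delta-1}\rs^{-1}$. The stated bound with $\ell^{-8\alpha}$ in the second term is weaker, since $\ell\le1$. We also need $\|w_{q+1}\|_{L^\infty}$ itself. We get it by summing the three pieces, and we use $\tau^{-1}\lambda=\lambda_{q+1}^{-1}\le1$ and $\sigma^{\delta-1}\le1$ so that the correctors do not dominate.

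For $\mathring{R}^u_{\mathrm{lin}}$ there are two parts. The first is the products $u_q\rotimes w_{q+1}$ and the like, which we bound by $\|(u_q,B_q)\|_{L^\infty}\|(w_{q+1},d_{q+1})\|_{L^\infty}\lesssim\lambda_q^2\ell^{-2\alpha}\rs^{-1/2}$. Since $\ell=\lambda_q^{-30}$, the factor $\lambda_q^2$ is absorbed into a power of $\ell^{-1}$. We then compare the result with the target $\ell^{-2\alpha}\rs^{-1/2}\sigma^{2\alpha_1-1+\delta}$. Because $\sigma=\lambda_{q+1}^5$ is enormous and $2\alpha_1-1+\delta$ may be small or even negative when $\alpha_1<1/2$, this comparison is the main obstacle. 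The step I would try first is to absorb $\lambda_q^{2}$ into $\sigma^{\delta}$, which works because $\lambda_q^{2}\ll\lambda_{q+1}^{5\delta}$ when $b\delta\gg1$. If $2\alpha_1-1<0$, I would instead keep a separate additive term $\lambda_q^2\ell^{-2\alpha}\rs^{-1/2}$, which is harmless for the later closure of \eqref{mitr3}. The second part is the dissipative term $\mathcal{R}^u(-\Delta)^{\alpha_1}w_{q+1}$. Here $\mathcal{R}^u(-\Delta)^{\alpha_1}$ is a Fourier multiplier of order $2\alpha_1-1$, and $\widehat w_{q+1}$ is supported in the annulus of size $\sigma$ by \eqref{e:supp-hatw}. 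Bernstein's inequality (equivalently, the $C^{\delta}$ argument used in Lemma \ref{lemma:rb1}) then gives $\lesssim\sigma^{2\alpha_1-1+\delta}\|w_{q+1}\|_{L^\infty}$. Combined with \eqref{e:w-est-besov0}, this yields \eqref{e:Rlin-est1}.
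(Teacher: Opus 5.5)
Your proposal is correct and follows the paper's route. The paper gives no separate proof here, only ``similarly'' to Lemma~\ref{lemma:rb1}: term-by-term H\"older in $L^\infty$ with $\|a_{(\xi)}\|_{L^\infty}\lesssim\ell^{-2\alpha}$, $\|W_{(\xi)}\|_{L^\infty}+\|D_{(\xi)}\|_{L^\infty}\lesssim\rs^{-1/2}$, the bounds \eqref{e:wp-est}--\eqref{e:wc-est}, and a frequency-localized bound for $\mathcal{R}^u(-\Delta)^{\alpha_1}w_{q+1}$. Your oscillation, corrector and dissipative estimates all check out, and your $\ell^{-4\alpha}$ in the corrector bound is stronger than the stated $\ell^{-8\alpha}$.

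\textbf{Where you are more careful than the paper.} This concerns the product part $u_q\rotimes w_{q+1}+w_{q+1}\rotimes u_q-B_q\rotimes d_{q+1}-d_{q+1}\rotimes B_q$ of $\mathring{R}^u_{\mathrm{lin}}$. In Lemma~\ref{lemma:rb1} the paper bounds the analogous products by $\ell^{-3\alpha}\rs^{-1}$ and then silently drops them from the stated estimate.

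Your absorption $\lambda_q^2\le\lambda_{q+1}^{5\delta}\le\sigma^\delta$ only needs $5b\delta\ge 2$. This follows from $b\varepsilon\ge1$ and $\varepsilon\ll\delta$, so it legitimately gives \eqref{e:Rlin-est1} when $\alpha_1\ge 1/2$.

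When $2\alpha_1-1+\delta<0$, however, the right-hand side of \eqref{e:Rlin-est1} decays in $\sigma$. A term such as $B_*\rotimes d_{q+1}+d_{q+1}\rotimes B_*$ is pointwise comparable to $|B_*|\,|d_{q+1}|$ and does not decay. So the estimate cannot be expected to hold as written in that regime; this is a defect of the statement, not of your argument. Keeping the additive term $\lambda_q^2\ell^{-2\alpha}\rs^{-1/2}$, as you propose, is the right repair, and it is all that the closure of \eqref{mitr3} requires.

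\textbf{One bookkeeping slip.} Bernstein with the factor $\sigma^{2\alpha_1-1+\delta}$, followed by \eqref{e:w-est-besov0}, yields $\sigma^{2\alpha_1-1+2\delta}$, not the stated power. Either of two fixes works:
\begin{enumerate}
\item Use the loss-free bound $\sigma^{2\alpha_1-1}\|w_{q+1}\|_{L^\infty}$. This is valid because the symbol is homogeneous of degree $2\alpha_1-1$ and $\widehat{w}_{q+1}$ lives on the annulus $|k|\sim\sigma$. It also handles $2\alpha_1-1<0$ more cleanly than the paper's $C^{2\alpha_2-1+\delta}$ norm.
\item Use your direct estimate $\|w_{q+1}\|_{L^\infty}\lesssim\ell^{-2\alpha}\rs^{-1/2}$.
\end{enumerate}
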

Finally, substituting \eqref{choose-prar} into Lemmas \ref{lemma:rb1} -- \ref{lemma:ru1}, we obtain that
\begin{align}
    \label{e:uBL}
    \|(\mathring{R}^u_{q+1}, \mathring{R}^B_{q+1})\|_{L^\infty} \lesssim  \lambda^{12\alpha}.
\end{align}    
\subsection{$C^N$-Estimates for the magnetic stress and Reynolds stress.}
\begin{lemma}[$C^N$-Estimates on $\mathring{R}_{q+1}^B$]
    \label{lemma:rb2}
    For $1 \le N \le 7$, we have the following $C^N$-estimates
    \begin{align}
        \label{b:Rosc-est2}
        &\|\mathring{R}_{\mathrm{osc},1}^{B}\|_{C^N} \lesssim  \ell^{-2\alpha}\lambda^N\rs^{-1} , \\
        \label{b:Rdis-est2}
        &\|\mathring{R}_{\mathrm{osc},2}^{B}\|_{C^N} \lesssim  \ell^{-2\alpha}\sigma^{N}\rs^{-1}, \\
        \label{b:Roff-est2}
        &\|\mathring{R}_{\mathrm{osc},3}^{B}\|_{C^N} \lesssim \ell^{-3\alpha}\sigma^{N}\rs^{-1}, \\
        \label{b:Rcor-est2}
        &\|\mathring{R}_{\mathrm{cor}}^{B}\|_{C^N} \lesssim \ell^{-3\alpha}\sigma^N\rs^{-1}, \\
        \label{b:Rlin-est2}
        &\|\mathring{R}_{\mathrm{lin}}^{B}\|_{C^N} \lesssim \ell^{-\alpha}\rs^{-\frac{1}{2}} \sigma^{2\alpha_2-1+N+\delta}.
    \end{align}
\end{lemma}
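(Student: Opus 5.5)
The plan is to run the same term-by-term argument as in Lemma \ref{lemma:rb1}, but to bound each factor in $C^N$ instead of $L^\infty$. Each product is expanded by the Leibniz rule, $\|fg\|_{C^N}\lesssim \sum_{j\le N}\|f\|_{C^j}\|g\|_{C^{N-j}}$. For the amplitude factors we use Lemmas \ref{Lem-mae-S1} and \ref{Lem-vae-S1}, which give $\|a_{(\xi)}\|_{C^j}\lesssim \ell^{-2j\alpha}$. For the building blocks we use Lemma \ref{buildingblockestlemma} with $p=\infty$, which gives $\|\nabla^j W_{(\xi)}\|_{L^\infty}+\|\nabla^j D_{(\xi)}\|_{L^\infty}\lesssim \rs^{-1/2}\lambda^j$. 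Every derivative falling on the oscillation $\cos(2\pi\sigma_\xi\xi\cdot x)$ or $\cos(4\pi\sigma_\xi\xi\cdot x)$ costs a factor $\sigma$. The amplitude derivatives cost only $\ell^{-2j\alpha}=\lambda_q^{60j\alpha}$, and this is much smaller than $\lambda^j=\lambda_{q+1}^j$ once $b$ is large. So the worst term in each Leibniz sum is the one where all derivatives hit the fastest-oscillating factor, and the prefactor matches the $L^\infty$ bound of Lemma \ref{lemma:rb1}.

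\textbf{Oscillation terms.} For $\mathring{R}^B_{\mathrm{osc},1}$ there is no $\sigma$-oscillation. The projection $P_{>0}$ only removes a constant, so $\|P_{>0}(D_{(\xi)}\otimes W_{(\xi)})\|_{C^N}\lesssim \lambda^N\rs^{-1}$, and multiplying by $a_{(\xi)}^2$ gives $\ell^{-2\alpha}\lambda^N\rs^{-1}$. For $\mathring{R}^B_{\mathrm{osc},2}$ and $\mathring{R}^B_{\mathrm{osc},3}$ the cosines contribute $\sigma^N$, and $\sigma\ge\lambda$ absorbs the building-block derivatives. The three amplitude factors in $\mathring{R}^B_{\mathrm{osc},3}$ give the prefactor $\ell^{-3\alpha}$ as in \eqref{b:Roff-est1}.

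\textbf{Corrector term.} For $\mathring{R}^B_{\mathrm{cor}}$ we apply Leibniz to each product. One factor is bounded by \eqref{e:dw-est-besov} or \eqref{e:dd-est-besov}, i.e.\ $\|w_{q+1}\|_{C^j},\|d_{q+1}\|_{C^j}\lesssim \ell^{-2\alpha}\sigma^j\rs^{-1/2}$. The other factor is the principal part or the correctors $w^{(c)},w^{(l)},d^{(c)},d^{(l)}$. These consist of $P_{\le\tau}$-localized amplitudes and building blocks times the same cosines, together with $P_{>\tau}$ pieces, so their $C^j$ norms are $\lesssim \ell^{-\alpha}\sigma^j\rs^{-1/2}$ up to harmless factors; this is the same reasoning as in the proof of \eqref{e:dw-est-besov}. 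Summing over $j$ yields $\ell^{-3\alpha}\sigma^N\rs^{-1}$.

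\textbf{Linear term, expected main obstacle.} The main difficulty is the nonlocal part $\mathcal{R}^B(-\Delta)^{\alpha_2}d_{q+1}$ in $\mathring{R}^B_{\mathrm{lin}}$. Here we cannot use Leibniz; instead we exploit the Fourier support in Lemma \ref{lemma:w}. Since $\widehat{d}_{q+1}$ lives in the annulus $\sigma/2<|k|<9\sigma/10$, we may write $\nabla^N\mathcal{R}^B(-\Delta)^{\alpha_2}d_{q+1}=T\,d_{q+1}$. The operator $T$ has symbol of order $N+2\alpha_2-1$, localized to that annulus, and is bounded $L^\infty\to L^\infty$ with norm $\lesssim\sigma^{N+2\alpha_2-1}$, up to a $\sigma^\delta$ loss from passing through $C^\delta$ exactly as in Lemma \ref{lemma:rb1}. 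Combined with $\|d_{q+1}\|_{L^\infty}\lesssim\|d_{q+1}\|_{B^0_{\infty,1}}\lesssim\ell^{-\alpha}\sigma^\delta\rs^{-1/2}$ from \eqref{e:d-est-besov0}, this gives the right-hand side of \eqref{b:Rlin-est2}.

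The local part of $\mathring{R}^B_{\mathrm{lin}}$ is the products with $u_q,B_q$. These are handled by Leibniz with \eqref{mitr2_} and \eqref{e:dw-est-besov}: $\|u_q\|_{C^j}\lesssim\lambda_q^{6j}\ll\sigma^j$. The resulting bound $\ell^{-3\alpha}\sigma^N\rs^{-1/2}$ is dominated by the stated one, since $2\alpha_2-1+\delta>0$ is not needed, only $\ell^{-2\alpha}\ll\sigma^{\delta}$, which holds because $\sigma\ge\lambda_{q+1}^5$.
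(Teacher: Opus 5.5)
Your route is the paper's: the Leibniz rule term by term for $\mathring{R}^B_{\mathrm{osc},i}$, for $\mathring{R}^B_{\mathrm{cor}}$ and for the products with $u_q,B_q$, plus the Fourier support of $d_{q+1}$ in $\{\sigma/2<|k|<9\sigma/10\}$ for $\mathcal{R}^B(-\Delta)^{\alpha_2}d_{q+1}$. The paper's bound $\|d_{q+1}\|_{C^{2\alpha_2-1+N+\delta}}$ is your annular-multiplier bound.

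The step that fails is your last sentence. To absorb the local bound $\ell^{-3\alpha}\sigma^N\rs^{-1/2}$ into the right-hand side of \eqref{b:Rlin-est2} you need
\[
\ell^{-2\alpha}\lesssim \sigma^{2\alpha_2-1+\delta}.
\]
So positivity of $2\alpha_2-1+\delta$ is exactly what is needed, not something you can drop. The condition $\ell^{-2\alpha}\ll\sigma^{\delta}$ is enough only when $\alpha_2\ge\frac12$. The paper allows every $\alpha_2>0$ and takes $\delta\ll\theta$. For $\alpha_2\le\frac{1-\delta}{2}$ the displayed inequality is impossible, since its left side is $\ge 1$ and its right side tends to $0$.

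This is not an artifact of crude bounds. Take $B_*\neq0$. The single product $B_*\otimes w_{q+1}-w_{q+1}\otimes B_*$ has $C^N$ norm of order $|B_*|\,\delta_{q+1}^{1/2}\sigma^N$. This is because $\widehat{w}_{q+1}$ lives at $|k|\sim\sigma$ and $\|w_{q+1}\|_{L^2}\gtrsim\delta_{q+1}^{1/2}$ (the velocity amplitudes satisfy $a_{(\xi)}^2\gtrsim\varrho_u\gtrsim\delta_{q+1}$). Nothing else in $\mathring{R}^B_{\mathrm{lin}}$ is built to cancel it. So in that regime \eqref{b:Rlin-est2} should not be expected to hold as written.

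The paper's proof has the same defect. It proves the local bound $\ell^{-3\alpha}\sigma^N\rs^{-1}$ and the nonlocal bound separately, then records only the latter; Lemma \ref{lemma:rb1} does the same at $N=0$. What both arguments actually give, for every $\alpha_2>0$, is
\[
\|\mathring{R}^B_{\mathrm{lin}}\|_{C^N}\lesssim \ell^{-3\alpha}\sigma^N\rs^{-1}+\ell^{-\alpha}\rs^{-\frac12}\sigma^{2\alpha_2-1+N+\delta}.
\]
This is all that \eqref{e:uBCN} uses, so you should state this instead of claiming domination.

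Two smaller corrections:
\begin{itemize}
\item In $\mathring{R}^B_{\mathrm{cor}}$ you assign $\ell^{-\alpha}$ to $w^{(p)}_{q+1},w^{(c)}_{q+1},w^{(l)}_{q+1}$. These carry the velocity amplitudes and are of size $\ell^{-2\alpha}$. Also, two of the four products contain neither $w_{q+1}$ nor $d_{q+1}$. The prefactor $\ell^{-3\alpha}$ is still right, because every product pairs one velocity-type factor ($\ell^{-2\alpha}$) with one magnetic-type factor ($\ell^{-\alpha}$).
\item In the nonlocal term, the annular multiplier is bounded on $L^\infty$ with norm $\lesssim\sigma^{N+2\alpha_2-1}$ and no $\sigma^\delta$ loss, since its kernel has $L^1$ norm of that size. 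Take the $\sigma^\delta$ only once, from \eqref{e:d-est-besov0}, or avoid it entirely by using $\|d_{q+1}\|_{L^\infty}\lesssim\ell^{-\alpha}\rs^{-1/2}$.
\end{itemize}
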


\begin{proof}
    For \eqref{b:Rosc-est2}, by \eqref{ew-endpt2}, \eqref{ed-endpt2}, \eqref{a-mag-S1} and \eqref{a-vel-S1-2}, we can calculate $\|\mathring{R}_{{\mathrm{osc}},1}^{B}\|_{C^N}$ as
    \begin{align*}
        \|\mathring{R}_{{\mathrm{osc}},1}^{B}\|_{C^N} 
        &\lesssim  \sum_{N_1 + N_2 + N_3 = N} \|a_{(\xi)}^2\|_{C^{N_1}}\|\nabla ^{N_2} D_{(\xi)}\| _{L^{\infty}}\|\nabla ^{N_3} W_{(\xi)}\|_{L^{\infty}} \\
        &\lesssim \ell^{-2\alpha}\lambda^N \rs^{-1},
    \end{align*}
    Next, to get \eqref{b:Rdis-est2}, we can calculate as
    \begin{align*}
        &\|\mathring{R}_{{\mathrm{osc}},2}^{B}\|_{C^N} \\
        & \lesssim \sum_{N_1 + N_2 + N_3 = N}\|a_{(\xi)}^2\|_{C^{N_1}} \|\nabla ^{N_2} D_{(\xi)}\| _{L^{\infty}}\|\nabla ^{N_3} W_{(\xi)}\|_{L^{\infty}} \| \cos(4\pi \sigma_{\xi} {\xi} \cdot x) \|_{C^{N_4}} \\
        & \lesssim \ell^{-2\alpha}\sigma^{N}\rs^{-1}.
    \end{align*}
    For $\mathring{R}_{{\mathrm{osc}},3}^{B}$, we can use Lemma \ref{buildingblockestlemma} as
    \begin{align*}
    &\|\mathring{R}_{{\mathrm{osc}},3}^{B}\|_{C^N} \\
	& \lesssim \max_{\xi\ne \xi'}\sum_{N_1 + N_2 + N_3 + N_4 + N_5 + N_6= N}\big(\|a_{(\xi)}\|_{C^{N_1}} \|a_{(\xi')}\|_{C^{N_2}}  \|\nabla ^{N_3}D_{(\xi')} \|_{L^{\infty}} \\
    &\| \nabla ^{N_4}W_{(\xi)}\|_{L^{\infty}}\| \cos(2\pi \sigma_{\xi}{\xi} \cdot x)\|_{C^{N_5}} \| \cos(2\pi \sigma_{\xi'}{\xi^{'}} \cdot x)\|_{C^{N_6}}\big)\\
    &\lesssim \ell^{-3\alpha}\sigma^{N}\rs^{-1}. 
    \end{align*}

    The estimate of $\mathring{R}_{\mathrm{cor}}^{B}$ directly follows from H\"older's inequality.
	\begin{align*}
	\|\mathring{R}_{\mathrm{cor}}^{B}\|_{C^N} &\leq
	\sum_{N_1 + N_2 = N}\big(\|d_{q+1}^{(p)}\|_{C^{N_1}} (\|w_{q+1}^{(c)}\|_{C^{N_2}} + \|w_{q+1}^{(l)}\|_{C^{N_2}})\\
	&+\|d_{q+1}\|_{C^{N_1}} (\|w_{q+1}^{(c)}\|_{C^{N_2}} + \|w_{q+1}^{(l)}\|_{C^{N_2}})\\
	&+\|w_{q+1}^{(p)}\|_{C^{N_1}} (\|d_{q+1}^{(c)}\|_{C^{N_2}} + \|d_{q+1}^{(l)}\|_{C^{N_2}})\\
	&+\|w_{q+1}\|_{C^{N_1}} (\|d_{q+1}^{(c)}\|_{C^{N_2}} + \|d_{q+1}^{(l)}\|_{C^{N_2}})\big).
	\end{align*}
	So by \eqref{e:wp-est} -- \eqref{e:wc-est} and \eqref{e:dp-est} -- \eqref{e:dc-est}, 
    we can get$$ \|\mathring{R}_{\mathrm{cor}}^{B}\|_{C^N} \lesssim \ell^{-3\alpha}\sigma^N\rs^{-1}.$$

    For $\mathring{R}_{\mathrm{lin}}^{B}$, by using H\"older's inequality, we first get 
    \begin{align*}
       &\|d_{q + 1} \otimes u_{q} - u_{q} \otimes d_{q+1}+ B_{q}\otimes w_{q+1} -w_{q +1} \otimes B_{q} \|_{C^N}\\
        &\lesssim \sum_{N_1 + N_2 = N} \big(\|B_{q}\|_{C^{N_1}} \|w_{q+1}\|_{C^{N_2}} + \|u_{q}\|_{C^{N_1}}\|d_{q+1}\|_{C^{N_2}}\big)\\
		&\lesssim \ell^{-3 \alpha}\sigma^N\rs^{-1}.
    \end{align*}
    Moreover, we can get 
    \[ \|\mathcal{R}^B(-\Delta)^{\alpha_2} d_{q+1}\|_{C^N} \leq \|d_{q+1}\|_{{C}^{2\alpha_2-1+N+\delta}}  \lesssim \ell^{-\alpha}\rs^{-\frac{1}{2}} \sigma^{2\alpha_2-1+N+\delta}. \]
	This completes the proof.
\end{proof}
Similarly, we can get the $C^N$-estimates of $\mathring{R}_{q+1}^u$.
\begin{lemma}[$C^N$-Estimates on $\mathring{R}_{q+1}^u$]
    \label{lemma:ru2}
    We have the following estimates for $\mathring{R}_{q+1}^u$, 
    \begin{align}
        \label{e:Rosc-est2}
        &\|\mathring{R}_{\mathrm{osc},1}^{u}\|_{C^N} \lesssim  \ell^{-4\alpha}\lambda^N\rs^{-1}, \\
        \label{e:Rdis-est2}
        &\|\mathring{R}_{\mathrm{osc},2}^{u}\|_{C^N} \lesssim   \ell^{-4\alpha}\sigma^{N}\rs^{-1}, \\
        \label{e:Roff-est2}
        &\|\mathring{R}_{\mathrm{osc},3}^{u}\|_{C^N} \lesssim \ell^{-4\alpha}\sigma^{N}\rs^{-1}, \\
        \label{e:Rcor-est2}
        &\|\mathring{R}_{\mathrm{cor}}^{u}\|_{C^N} \lesssim \ell^{-4\alpha}\ell^{-6\alpha}\sigma^N\rs^{-1}, \\
        \label{e:Rlin-est2}
        &\|\mathring{R}_{\mathrm{lin}}^{u}\|_{C^N} \lesssim   \ell^{-2\alpha}\rs^{-\frac{1}{2}}\sigma^{ 2\alpha_1 -1 + N + \delta}.
    \end{align}
\end{lemma}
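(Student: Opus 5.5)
The plan is to mirror the proof of Lemma \ref{lemma:rb2}, term by term, for the Reynolds stress decomposition $\mathring{R}_{q+1}^u=\mathring{R}_{\mathrm{lin}}^u+\mathring{R}^u_{\mathrm{osc},1}+\mathring{R}^u_{\mathrm{osc},2}+\mathring{R}^u_{\mathrm{osc},3}+\mathring{R}_{\mathrm{cor}}^u$. The only structural difference from the magnetic case is bookkeeping of the amplitude sizes. The velocity amplitudes now satisfy $\|a_{(\xi)}\|_{C^N}\lesssim \ell^{-2N\alpha}$ and $\|a_{(\xi)}\|_{L^\infty}\lesssim\ell^{-2\alpha}$ by Lemma \ref{Lem-vae-S1}, instead of the $\ell^{-\alpha}$-type bounds of Lemma \ref{Lem-mae-S1}. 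The trace-free part $\rotimes$ is bounded by the full tensor product, so as in the proof of Lemma \ref{lemma:ru} I only estimate $\otimes$.

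For $\mathring{R}^u_{\mathrm{osc},1}$ I apply the Leibniz rule to $a_{(\xi)}^2\,\P_{>0}(W_{(\xi)}\otimes W_{(\xi)})$, and similarly with $D_{(\xi)}$ for $\xi\in\Lambda_B$. The bound $\|\P_{>0}f\|_{C^N}\le 2\|f\|_{C^N}$ lets me drop the projection. By \eqref{ew-endpt2}--\eqref{ed-endpt2} with $p=\infty$, each derivative on a building block costs $\lambda$ and the $L^\infty$ norms contribute $\rs^{-1}$. Derivatives falling on $a_{(\xi)}^2$ cost $\ell^{-2\alpha}$ each. This is absorbed because $\ell^{-2\alpha}=\lambda_q^{60\alpha}\ll\lambda=\lambda_{q+1}$ once $b$ is large, so every mixed term is dominated by the prefactor $\|a_{(\xi)}\|_{L^\infty}^2\lesssim \ell^{-4\alpha}$ times $\lambda^N\rs^{-1}$.

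For $\mathring{R}^u_{\mathrm{osc},2}$ and $\mathring{R}^u_{\mathrm{osc},3}$ the same Leibniz expansion applies, now with the extra oscillating factors $\cos(4\pi\sigma_\xi\xi\cdot x)$ or $\cos(2\pi\sigma_\xi\xi\cdot x)\cos(2\pi\sigma_{\xi'}\xi'\cdot x)$. Their $C^{N_j}$ norms are $\lesssim\sigma^{N_j}$, since $|\sigma_\xi|\lesssim\sigma$ by Lemma \ref{lemma:arithmetic-sigma}. Since $\sigma\gg\lambda\gg\ell^{-2\alpha}$, the worst term is $\ell^{-4\alpha}\sigma^N\rs^{-1}$. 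For $\mathring{R}^u_{\mathrm{osc},3}$ I bound the pairwise products $\phi_{(\xi)}\phi_{(\xi')}$ crudely by the product of sup norms, giving $\rs^{-1}$. The sharper estimate \eqref{intersect-phik1} is not needed here.

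For $\mathring{R}^u_{\mathrm{cor}}$ I use $\|fg\|_{C^N}\lesssim\sum_{N_1+N_2=N}\|f\|_{C^{N_1}}\|g\|_{C^{N_2}}$ with the factors bounded as follows:
\begin{itemize}
\item $\|w_{q+1}\|_{C^{N}}$ and $\|d_{q+1}\|_{C^N}$ by \eqref{e:dw-est-besov} and \eqref{e:dd-est-besov};
\item the principal parts by Leibniz, giving $\ell^{-2\alpha}\sigma^N\rs^{-1/2}$ after absorbing $\ell^{-2N\alpha}\lambda^{N}$-type terms;
\item $w^{(c)}_{q+1}$ and $w^{(l)}_{q+1}$, together with their $d$ counterparts, by differentiating the explicit formulas. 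Here the factors $\nabla P_{\le\tau}a_{(\xi)}$ and $P_{>\tau}a_{(\xi)}$ contribute extra powers of $\ell^{-2\alpha}$, up to $\ell^{-2(N+1)\alpha}$.
\end{itemize}
Crude collection of all the $\ell$ losses yields the stated $\ell^{-4\alpha}\ell^{-6\alpha}\sigma^N\rs^{-1}$.

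For $\mathring{R}^u_{\mathrm{lin}}$, the transport terms are estimated by Leibniz, using \eqref{mitr2_} for $u_q,B_q$ and \eqref{e:dw-est-besov} for $w_{q+1},d_{q+1}$. Since $\lambda_q^{6N}\le\sigma^N$, this gives $\ell^{-3\alpha}\sigma^N\rs^{-1/2}$. For the dissipative term, $|\nabla|\mathcal{R}^u$ is Calderón--Zygmund and thus bounded on $C^{\beta}$ for noninteger $\beta$, so
$$\|\mathcal{R}^u(-\Delta)^{\alpha_1}w_{q+1}\|_{C^N}\lesssim\|w_{q+1}\|_{C^{2\alpha_1-1+N+\delta}}.$$
Because $\widehat w_{q+1}$ lives on the annulus $|k|\sim\sigma$ by \eqref{e:supp-hatw}, Bernstein gives
$$\|w_{q+1}\|_{C^{2\alpha_1-1+N+\delta}}\lesssim\sigma^{2\alpha_1-1+N+\delta}\|w_{q+1}\|_{L^\infty},$$
and $\|w_{q+1}\|_{L^\infty}\lesssim\ell^{-2\alpha}\rs^{-1/2}$. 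Since $2\alpha_1-1\ge 1>0$ here, this dominates the transport contribution.

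The main obstacle is bookkeeping rather than any new idea. The point to verify is that every $\ell^{-2\alpha}$ loss from differentiating amplitudes is dominated by a factor $\lambda$ or $\sigma$, which relies on $\ell^{-1}=\lambda_q^{30}$ being much smaller than $\lambda_{q+1}$. The second point needing care is the Hölder boundedness of $\mathcal{R}^u(-\Delta)^{\alpha_1}$. I would handle it by applying Bernstein directly on the $\sigma$-annulus, where $\mathcal{R}^u(-\Delta)^{\alpha_1}$ acts as a smooth Fourier multiplier of order $2\alpha_1-1$; this avoids Hölder theory entirely.
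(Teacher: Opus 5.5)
\textbf{A gap in the $\mathring{R}^u_{\mathrm{lin}}$ estimate.} Your route is the paper's. The paper only says this lemma follows ``similarly'' to Lemma~\ref{lemma:rb2}, i.e.\ by the same Leibniz bookkeeping with the velocity amplitude bounds of Lemma~\ref{Lem-vae-S1}. Your treatment of the oscillation and corrector terms is sound, and so is your treatment of the dissipative term via Bernstein on the $\sigma$-annulus.

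The step that fails is the assertion $2\alpha_1-1\ge 1$. Only $\alpha=\max\{\alpha_1,\alpha_2,1\}\ge 1$ is known; $\alpha_1$ itself is an arbitrary positive number. Absorbing your transport bound $\ell^{-3\alpha}\sigma^N\rs^{-1/2}$ into $\ell^{-2\alpha}\rs^{-1/2}\sigma^{2\alpha_1-1+N+\delta}$ requires $\ell^{-\alpha}\lesssim\sigma^{2\alpha_1-1+\delta}$. Using $\ell^{-1}=\lambda_q^{30}$ and $\sigma=\lambda_{q+1}^5=\lambda_q^{5b}$, this reads $6\alpha\le b(2\alpha_1-1+\delta)$. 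Taking $b$ large achieves this exactly when $2\alpha_1-1+\delta>0$.

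For $\alpha_1\le(1-\delta)/2$ the factor $\sigma^{2\alpha_1-1+\delta}$ is at most $1$. Meanwhile the transport terms are of size $\sigma^N$ times their amplitude, with no small factor. An example is the piece $B_*\rotimes d_{q+1}$ inside $B_q\rotimes d_{q+1}$, since $\hat B_q(0)=B_*$. So there is no reason to expect \eqref{e:Rlin-est2} to hold in that range, and your argument does not give it.

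The paper makes the same silent absorption. In Lemma~\ref{lemma:rb2} it records the transport bound $\ell^{-3\alpha}\sigma^N\rs^{-1}$ and then states only the dissipative one. So the defect lies in the statement rather than in your method, but your write-up inherits it through an explicitly false claim.

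To close the argument, do one of the following:
\begin{itemize}
\item assume $\alpha_1>1/2$ and choose $b\ge 6\alpha/(2\alpha_1-1+\delta)$; or
\item state the bound for $\mathring{R}^u_{\mathrm{lin}}$ as the sum $\ell^{-3\alpha}\sigma^N\rs^{-1/2}+\ell^{-2\alpha}\rs^{-1/2}\sigma^{2\alpha_1-1+N+\delta}$, which already suffices for \eqref{e:uBCN}.
\end{itemize}
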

Finally, substituting \eqref{choose-prar} into Lemmas \ref{lemma:rb2} -- \ref{lemma:ru2}, we obtain that
\begin{align}
    \label{e:uBCN}
    \|(\mathring{R}^u_{q+1}, \mathring{R}^B_{q+1})\|_{C^N} \lesssim  \lambda^{12N\alpha}.
\end{align}    

\subsection{Proof of main iteration Theorem \ref{iteration} }
For (i), let $N_{q+1} = \sigma. $

We can get (i) from Lemma \ref{lemma:w} directly.

For(ii), it is directly from Lemmas \ref{esti-wq} and \ref{esti-dq}.

For(iii), by the definition of $w_{q+1}$ and $d_{q+1}$, we have 
\begin{align*}
    w_{q+1} \otimes w_{q+1} &= w^{(p)}_{q+1} \otimes w^{(p)}_{q+1} + w^{(p)}_{q+1} \otimes (w^{(c)}_{q+1} + w^{(l)}_{q+1} ) 
    +(w^{(c)}_{q+1} + w^{(l)}_{q+1} ) \otimes  w^{(p)}_{q+1} \\
    &+(w^{(c)}_{q+1} + w^{(l)}_{q+1} )  \otimes (w^{(c)}_{q+1} + w^{(l)}_{q+1} ),
\end{align*}
and
\begin{align*}
    d_{q+1} \otimes d_{q+1} &= d^{(p)}_{q+1} \otimes d^{(p)}_{q+1} + d^{(p)}_{q+1} \otimes (d^{(c)}_{q+1} + d^{(l)}_{q+1} ) 
    +(d^{(c)}_{q+1} + d^{(l)}_{q+1} ) \otimes  d^{(p)}_{q+1} \\
    &+(d^{(c)}_{q+1} + d^{(l)}_{q+1} )  \otimes (d^{(c)}_{q+1} + d^{(l)}_{q+1} ).
\end{align*}    
Since 
\begin{align*}
    &w^{(p)}_{q+1} \otimes w^{(p)}_{q+1} \\
    &= \sum_{\xi \in \Lambda_u \cup \Lambda_B} \frac{1}{2} a_{(\xi)}^2 \fint_{\mathbb{T}^3}  W_{(\xi)} \otimes W_{(\xi)}  \d x + \sum_{\xi \in \Lambda_u \cup \Lambda_B} \frac{1}{2} a_{(\xi)}^2 \P_{>0}( W_{(\xi)}\otimes W_{(\xi)}) \\
    &\quad+\sum_{\xi \in \Lambda_u \cup \Lambda_B} \frac{1}{2}  \cos(4\pi \sigma_{\xi}{\xs} \cdot x) a_{(\xi)}^2 ( W_{(\xi)}\otimes W_{(\xi)})\\
    &\quad+ \sum_{\xi \neq \xi' \in \Lambda_u\cup\Lambda_B }a_{(\xi)}a_{(\xi')} \cos(2\pi \sigma_{\xi}{\xs} \cdot x) \cos(2\pi \sigma_{\xi'}{\xi^{'}} \cdot x) W_{(\xi)}\otimes W_{(\xi')}\\
    &= \sum_{\xi \in \Lambda_u \cup \Lambda_B} \frac{1}{2} a_{(\xi)}^2 \fint_{\mathbb{T}^3} ( W_{(\xi)} \otimes W_{(\xi)} ) \d x \\
    &\quad+\frac{1}{2} \sum_{\xi \in \Lambda_u \cup \Lambda_B} \left((P_{\le (\lambda\rs)/2}+ P_{>(\lambda\rs)/2}) (a_{(\xi)}^2)\P_{> 0}(W_{(\xi)}{\otimes}W_{(\xi)}) \right)\\
    &\quad+\frac{1}{2}\sum_{\xi \in \Lambda_u} P_{\le \tau}(a_{(\xi)}^2) P_{\le \tau} (W_{(\xi)}\otimes W_{(\xi)}) \cos(4\pi \sigma_{\xi} {\xi} \cdot x) \\
    &\quad+\frac{1}{2}\sum_{\xi \in \Lambda_u} P_{>\tau}(a_{(\xi)}^2) P_{\le \tau} (W_{(\xi)} \otimes W_{(\xi)}) \cos(4\pi \sigma_{\xi} {\xi} \cdot x) \\   
	&\quad+\frac{1}{2}\sum_{\xi \in \Lambda_u}(a_{(\xi)}^2)  P_{> \tau}(W_{(\xi)}\otimes W_{(\xi)}) \cos(4\pi \sigma_{\xi} {\xi} \cdot x)\\
    &\quad+ \sum_{\xi \neq \xi' \in \Lambda_u\cup\Lambda_B }a_{(\xi)}a_{(\xi')} \cos(2\pi \sigma_{\xi}{\xs} \cdot x) \cos(2\pi \sigma_{\xi'}{\xi^{'}} \cdot x) W_{(\xi)}\otimes W_{(\xi')}
\end{align*}
by Lemmas \ref{Lem-vae-S1} and \ref{esti-wq} and similar estimates as in the proof of Lemma \ref{lemma:ru}, we can obtain that for $s>2\alpha + 3/2$, 
    \begin{align*}
        \|w_{q+1} \otimes w_{q+1} \|_{\Hs} \lesssim \delta_{q+1}.
    \end{align*}
 It is similar with $d_{q+1} \otimes d_{q+1}$ as
    \begin{align*}
         \|d_{q+1} \otimes d_{q+1} \|_{\Hs} \lesssim \delta_{q+1},
    \end{align*}
For $d_{q+1} \otimes w_{q+1}$ and $w_{q+1} \otimes d_{q+1}$, we have 
\begin{align*}
    w_{q+1} \otimes d_{q+1} &= w^{(p)}_{q+1} \otimes d^{(p)}_{q+1} + w^{(p)}_{q+1} \otimes (d^{(c)}_{q+1} + d^{(l)}_{q+1} ) 
    +(w^{(c)}_{q+1} + w^{(l)}_{q+1} ) \otimes  d^{(p)}_{q+1} \\
    &+(w^{(c)}_{q+1} + w^{(l)}_{q+1} )  \otimes (d^{(c)}_{q+1} + d^{(l)}_{q+1} ),
\end{align*}
and
\begin{align*}
    d_{q+1} \otimes w_{q+1} &= d^{(p)}_{q+1} \otimes w^{(p)}_{q+1} + d^{(p)}_{q+1} \otimes (w^{(c)}_{q+1} + w^{(l)}_{q+1} ) 
    +(d^{(c)}_{q+1} + d^{(l)}_{q+1} ) \otimes  w^{(p)}_{q+1} \\
    &+(d^{(c)}_{q+1} + d^{(l)}_{q+1} )  \otimes (w^{(c)}_{q+1} + w^{(l)}_{q+1} ).
\end{align*}   
Since 
\begin{align*}
    &w^{(p)}_{q+1} \otimes d^{(p)}_{q+1} \\
    &= \sum_{\xi \in \Lambda_B} \frac{1}{2} a_{(\xi)}^2 \fint_{\mathbb{T}^3} ( W_{(\xi)} \otimes D_{(\xi)} ) \d x + \sum_{\xi \in  \Lambda_B} \frac{1}{2} a_{(\xi)}^2 \P_{>0}( W_{(\xi)}\otimes D_{(\xi)}) \\
    &\quad+\sum_{\xi \in \Lambda_B} \frac{1}{2}  \cos(4\pi \sigma_{\xi}{\xs} \cdot x) a_{(\xi)}^2 ( W_{(\xi)}\otimes D_{(\xi)})\\
    &\quad+ \sum_{\xi \neq \xi' \in \Lambda_B }a_{(\xi)}a_{(\xi')} \cos(2\pi \sigma_{\xi}{\xs} \cdot x) \cos(2\pi \sigma_{\xi'}{\xi^{'}} \cdot x) W_{(\xi)}\otimes D_{(\xi')}\\
    &= \sum_{\xi \in  \Lambda_B} \frac{1}{2} a_{(\xi)}^2 \fint_{\mathbb{T}^3} ( W_{(\xi)} \otimes D_{(\xi)} ) \d x \\
    &\quad+\frac{1}{2} \sum_{\xi \in  \Lambda_B} \left((P_{\le (\lambda\rs)/2}+ P_{>(\lambda\rs)/2}) (a_{(\xi)}^2)\P_{> 0}(W_{(\xi)}{\otimes}D_{(\xi)}) \right)\\
    &\quad+\frac{1}{2}\sum_{\xi \in \Lambda_B} P_{\le \tau}(a_{(\xi)}^2) P_{\le \tau} (W_{(\xi)}\otimes D_{(\xi)}) \cos(4\pi \sigma_{\xi} {\xi} \cdot x) \\
    &\quad+\frac{1}{2}\sum_{\xi \in \Lambda_B} P_{>\tau}(a_{(\xi)}^2) P_{\le \tau} (W_{(\xi)} \otimes D_{(\xi)}) \cos(4\pi \sigma_{\xi} {\xi} \cdot x) \\   
	&\quad+\frac{1}{2}\sum_{\xi \in \Lambda_B}(a_{(\xi)}^2)  P_{> \tau}(W_{(\xi)}\otimes D_{(\xi)}) \cos(4\pi \sigma_{\xi} {\xi} \cdot x)\\
    &\quad+ \sum_{\xi \neq \xi' \in \Lambda_B }a_{(\xi)}a_{(\xi')} \cos(2\pi \sigma_{\xi}{\xs} \cdot x) \cos(2\pi \sigma_{\xi'}{\xi^{'}} \cdot x) W_{(\xi)}\otimes D_{(\xi')},
\end{align*}
by Lemmas \ref{Lem-mae-S1} and \ref{esti-dq} and similar estimates as in the proof of Lemma \ref{lemma:rb}, we can obtain that for $s>3/2 + 2 \alpha$, 
    \begin{align*}
        &\|w_{q+1} \otimes d_{q+1} \|_{\Hs} \lesssim \delta_{q+1}.
    \end{align*}    
Estimates on $\|d_{q+1} \otimes w_{q+1} \|_{\Hs}$ can be obtained by exchanging $D_{(\xi)}$ and $W_{(\xi)}$.
    
    Next, we use H\"older's inequality and  $L^1$ embedding into $\Hs$ such that
     \begin{align*} 
        &\sum_{M \le 2N} \big( \|P_M u_q \otimes w_{q+1} \|_{\Hs} + \|w_{q+1} \otimes P_M u_q \|_{\Hs} )\\
        &\lesssim \sum_{M \le 2N} \big( \| P_M u_q \otimes w_{q+1} \|_{L^1} + \|w_{q+1} \otimes P_M u_q\|_{L^1} ) \\
        &\lesssim  \|u_q\|_{B^0_{\infty,1}} \|w_{q+1}\|_{L^1} \lesssim \ell^{-3 \alpha} r < \delta_{q+2}.
     \end{align*}
   
    By using H\"older's inequality, we have 
     \begin{align*}
        &\sum_{M \le 2N} \big( \|P_M B_q \otimes d_{q+1} \|_{\Hs} + \| d_{q+1} \otimes P_M B_q\|_{\Hs} )\\
        &\lesssim \sum_{M \le 2N} \big(\|P_M B_q \otimes d_{q+1}\|_{L^1} + \| d_{q+1} \otimes P_M B_q\|_{L^1})\\
        &\lesssim  \|B_q\|_{B^0_{\infty,1}} \|d_{q+1}\|_{L^1}  \lesssim \ell^{-3 \alpha} r < \delta_{q+2}, 
     \end{align*}
      and 
      \begin{align*}
        &\sum_{M \le 2N} \big(\|w_{q+1} \otimes P_M B_q \|_{\Hs} + \|  P_M u_q \otimes d_{q+1} \|_{\Hs} )\\
        &\lesssim \sum_{M \le 2N} \big(\|w_{q+1} \otimes P_M B_q\|_{L^1} + \|  P_M u_q \otimes d_{q+1} \|_{L^1})\\
        &\lesssim  \|B_q\|_{B^0_{\infty,1}} \|w_{q+1}\|_{L^1} +  \|u_q\|_{B^0_{\infty,1}} \|d_{q+1}\|_{L^1} \lesssim \ell^{-3\alpha} r < \delta_{q+2}, 
     \end{align*}
     moreover,
     \begin{align*}
        &\sum_{M \le 2N} \big(\|d_{q+1} \otimes P_M u_q \|_{\Hs} + \|  P_M B_q \otimes u_{q+1} \|_{\Hs} )\\
        &\lesssim \sum_{M \le 2N} \big(\|d_{q+1} \otimes P_M u_q\|_{L^1} + \|  P_M B_q \otimes u_{q+1} \|_{L^1})\\
        &\lesssim  \|u_q\|_{B^0_{\infty,1}} \|d_{q+1}\|_{L^1} +  \|B_q\|_{B^0_{\infty,1}} \|u_{q+1}\|_{L^1} \lesssim \ell^{-3\alpha} r < \delta_{q+2}, 
     \end{align*}
    so we get(iii).

    Next, we check the iteration estimates \eqref{mitr1} -- \eqref{mitr2}. 
    For \eqref{mitr1}, by \eqref{mitr1_}, \eqref{e:dw-est-besov} and \eqref{e:dd-est-besov} we have 
\begin{align*}
	& \|(u_{q+1}, B_{q+1})\|_{L^\infty} \\
	& \leq \|(u_{q}, B_{q})\|_{L^\infty} + \|(w_{q+1}, d_{q+1})\|_{L^\infty}\\
	& \lesssim \lambda_{q}^{2}+ \ell^{-2}\rs^{-\frac{1}{2}}\\
	& \lesssim \lambda_{q+1}^{2}
\end{align*}
Then, we can calculate \eqref{mitr2} by \eqref{mitr2_}, \eqref{e:dw-est-besov} and \eqref{e:dd-est-besov}
\begin{align*}
	& \|(u_{q+1}, B_{q+1})\|_{C^N} \\
	& \leq \|(u_{q}, B_{q})\|_{C^N} + \|(w_{q+1}, d_{q+1})\|_{C^N}\\
	& \lesssim \lambda_{q}^{6N}+ \ell^{-2 \alpha}\lambda_{q+1}^{5N}\rs^{-\frac{1}{2}}\\
	& \lesssim \lambda_{q+1}^{6N}
\end{align*}
For \eqref{mitr6}, we have 
\begin{align*}
	& \|(u_{q+1}, B_{q+1})\|_{B^0_{\infty,1}} \\
	& \leq \|(u_{q}, B_{q})\|_{B^0_{\infty,1}} + \|(w_{q+1}, d_{q+1})\|_{B^0_{\infty,1}}\\
	& \lesssim \lambda_{q}^{6}+ \ell^{-4\alpha} \sigma ^{\delta} \rs^{-\frac{1}{2}}\\
	& \lesssim \lambda_{q+1}^{6}
\end{align*}

    Finally, the estimates \eqref{mitr3} -- \eqref{mitr5} directly follow \eqref{e:uBHs}, \eqref{e:uBL}, \eqref{e:uBCN}.

\section{Proof of Theorem \ref{thm2}}

In this section, we would use the main iteration Theorem \ref{iteration} to prove Theorem \ref{thm2} by three steps.\\
{\bf Step 1: verifying the start step in the iteration.}
First, we verify the iterative step for $q=0$. 
We select a smooth pair $(u_0, B_0)$ satisfying $\operatorname{div} u_0 = \operatorname{div} B_0 = 0$
with $\supp \hat{u}_0 \subset \{k\in \mathbb{Z}^3:|k|=1\}$ and $\hat{B}_0 (0)=B_{*}$, $\hat{B}_{0} (\xi)=0$, $\forall \xi \ne 0$,
for instance a sequence of shear flows 
$
 u^{(m)}_0 = (cos(2\pi x_3),0,0) 
 $
 with 
 $
 B^{(m)}_0 = B_{*}.
$  

Define:
\begin{align}
	&\mathring{R}_0^u=\mathcal{R}^u\(\nu_1(-\Delta)^{\alpha} u_0\) + u_0\mathring\otimes u_0-B_0\mathring\otimes B_0, \label{r0u}\\
	& \mathring{R}_0^B=\mathcal{R}^B\(\nu_2(-\Delta)^{\alpha} B_0\) + B_0\otimes u_0-u_0\otimes B_0,   \label{r0b}
\end{align}
together with
$ P_0 = -\frac{1}{3} (|u_0|^2 - |B_0|^2)$.

Let
\[ w_0:=u_0, \quad N_0:=\max\{2,B_{*}\}, \]
we can get $(u_0,B_0,\mathring{R}^{u}_0, \mathring{R}^{B}_0)$ is a smooth solution to \eqref{R_MHDs} with $\supp \hat{u}_0 \subset B(0,N_0)$ and $\supp \hat{B}_0 \subset B(0,N_0)$. 
Then, applying Theorem \ref{iteration}, we get a sequence of approximate solutions
$ (u_q, B_q, \mathring{R}^{u}_q, \mathring{R}^{B}_q)$ to \eqref{R_MHDs} satisfying the estimates \eqref{mitr1_} -- \eqref{mitr3_} and \ref{item:prop-hatw-supp} -- \ref{item:prop-w-paraproduct}. 
It follows that $(u_q, B_q) $ is a Cauchy sequence in $B^{-\theta}_{\infty,1}$, and $(\mathring{R}^{u}_q, \mathring{R}^{B}_q)\to 0$ \text{in} $ H^{-s} $. We define
$$u := \lim_{q \to \infty} u_q, \quad B: = \lim_{q \to \infty} B_q \quad \text{in } {B}^{-\theta}_{q,r}(\mathbb{T}^3).$$
Let 
$
\displaystyle\sum_{q\ge 1}\delta_q \ll \|u_0\|_{B^{-\theta}_{\infty,1}},
$
since  
\begin{align*}
\| u \|_{B^{-\theta}_{\infty,1}} \ge  \| u_0 \|_{B^{-\theta}_{\infty,1}} -\| u - u_0 \|_{B^{-\theta}_{\infty,1}} >  \frac{1}{2} \| u_0 \|_{B^{-\theta}_{\infty,1}} ,
\end{align*}
we can get a sequence of different limit functions.\\
{\bf Step 2: verifying the tensor products to be well-defined.}
Next, we verify that $u\otimes u, u\otimes B,$ $B\otimes u$ \text{and} $B\otimes B$ are well-defined in $H^{-s}$ and are the limits of $u_q\otimes u_q, u_q\otimes B_q,$ $B_q\otimes u_q$ \text{and} $B_q\otimes B_q$.
Since $u\otimes u, B\otimes B, u\otimes B,$ \text{and} $B\otimes u $ have similar structures,
we would only present the calculations for $u\otimes u$ \text{and} $u\otimes B$ and the other cases follow similarly.
Applying Theorem \ref{iteration}, we check $u\otimes u$ as
\begin{align*}
    \sum_{1\le Q,Q' \le Q_{\mathrm{max}}} \|P_Q u \otimes P_{Q^{'}}u\|_{\Hs}& \leq \sum_{2Q_{\mathrm{max}} \ge q,q' \ge 0} \|w_q \otimes w_{q'}\|_{\Hs} \\
    &= \|w_0 \otimes w_0\|_{\Hs} + \sum_{2Q_{\mathrm{max}} \ge q \ge 1} \|w_q \otimes w_q\|_{\Hs} \\
    &+ \sum_{1\le q' < q \le 2Q_{\mathrm{max}}} \big(\|w_q \otimes w_{q'}\|_{\Hs} + \|w_{q'} \otimes w_q\|_{\Hs} \big) \\
    &\lesssim \|w_0 \otimes w_0\|_{\Hs} + \sum_{q\ge 1} \delta_{q}   < \infty.
\end{align*}
Similarly, we check $u\otimes B$ as
\begin{align*}
    \sum_{1\le Q,Q' \le Q_{\mathrm{max}}} \|P_Q u \otimes P_{Q^{'}}B\|_{\Hs} & \leq \sum_{2Q_{\mathrm{max}} \ge q,q' \ge 0} \|w_q \otimes d_{q'}\|_{\Hs} \\
    &= \|w_0 \otimes d_0\|_{\Hs} + \sum_{2Q_{\mathrm{max}} \ge q \ge 1} \|w_q \otimes d_q\|_{\Hs} \\
    &\quad + \sum_{1\le q' < q \le 2Q_{\mathrm{max}}} \big(\|w_q \otimes d_{q'}\|_{\Hs} + \|w_{q'} \otimes d_q\|_{\Hs} \big) \\
    &\lesssim \|w_0 \otimes d_0\|_{\Hs} + \sum_{q\ge 1} \delta_{q} < \infty.
\end{align*}
Finally, we check the limits as 
\begin{align*}
\lim_{q\to\infty} u_q\otimes u_q = \lim_{q\to\infty} P_{\leq N_q}u \otimes P_{\leq N_q}u = u\otimes u \quad\text{in } \Hs,\\
\lim_{q\to\infty} u_q\otimes B_q = \lim_{q\to\infty} P_{\leq N_q}u \otimes P_{\leq N_q}B = u\otimes B \quad\text{in } \Hs,\\
\lim_{q\to\infty} B_q\otimes u_q = \lim_{q\to\infty} P_{\leq N_q}B \otimes P_{\leq N_q}u = B\otimes u \quad\text{in } \Hs,\\
\lim_{q\to\infty} B_q\otimes B_q = \lim_{q\to\infty} P_{\leq N_q}u \otimes P_{\leq N_q}u = B\otimes B \quad\text{in } \Hs.
\end{align*}
{\bf Step 3: verifying the limits $(u,B)$ as solutions to \eqref{equa-sMHD}.}
Finally, we verify that $(u, B) $ satisfies the equations \eqref{equa-sMHD}.
For any divergence-free test functions $\varphi \in C^\infty(\mathbb{T}^3)$,
\[
-\langle u_q \otimes u_q - B_q \otimes B_q, \nabla\varphi\rangle + \langle u_q,  \nu_1(-\Delta)^{\alpha_1}\varphi\rangle = \langle \mathring{R}^{u}_q, - \nabla\varphi\rangle,
\]
	\[
	-\langle B_q \otimes u_q - u_q \otimes B_q, \nabla\varphi\rangle + \langle B_q,  \nu_2(-\Delta)^{\alpha_2}\varphi\rangle = \langle \mathring{R}^{B}_q, - \nabla\varphi\rangle,
	\]
Since $\|(\mathring{R}^u_q ,\mathring{R}^B_q  )\|_{\Hs} < \delta_{q+1},$ we can get that $(u,B)$ is a solution to \eqref{equa-sMHD} by taking limits $q\to \infty$.

Moreover, we show the way to get infinite solutions to \eqref{equa-sMHD}. 
Let $\{u_{0}^{(i)},B_{0}^{(i)}\}$ are a sequence of smooth divergence-free vector fields, where $B_{0}^{(i)}$ always satisfies $B_{0}^{(i)}(0) = B_*$ and $B_{0}^{(i)}(\xi) = 0,\ \forall \xi \ne 0$. 
We pick 
\[
\|u_{0}^{(i)} - u_{0}^{(j)}\|_{{B}^{-\theta}_{q,r}} > 3,\quad \forall i \ne j,
\]
Since 
\[
\|(u^{(i)},B^{(i)}) -(u_0,B_0)\|_{{B}^{-\theta}_{q,r}} \le \sum_{q=0}^{\infty} \| (w_{q+1} , d_{q+1}) \|_{{B}^{-\theta}_{q,r}}\le \sum_{q=0}^{\infty} \delta_{q+2}  \le 1,
\]
we can get 
\[
\|(u^{(i)},B^{(i)}) - (u^{(j)},B^{(j)})\|_{{B}^{-\theta}_{q,r}} 
\ge 1, \quad \forall i \ne j,
\]
where $(u^{(i)},B^{(i)})$ denote the singular weak solution with $(u_{0}^{(i)},B_{0}^{(i)})$ starting the iteration. 
Thus for any $i \ne j$, we have $(u^{(i)},B^{(i)}) \ne (u^{(j)},B^{(j)})$, which completes the proof.
\section*{Acknowledgments}
This research is supported in part by NSFC Grants No.12431007 and 62588101.

\bibliographystyle{plain}  
\bibliography{sMHD_ref}  

@article{NA25,
	author = {N. Gismondi and A. F. Radu},
	journal = {arXiv:2510.16583},
	title = {Intermittent solutions of the stationary 2D surface quasi-geostrophic equation},
	year = {2025}}

@article{CS05,
	author = {M. Christ},
	journal = {arXiv preprint math/0503366},
	title = {Nonuniqueness of weak solutions of the nonlinear {S}chr{\"o}dinger equation},
	year = {2005}}

@article{ABGM25,
	author = {E. Ashkarian and A. Bhargava and N. Gismondi and M. Novack},
	journal = {arXiv preprint arXiv:2506.00841},
	title = {Intermittent singular solutions of the stationary {2D} {N}avier-{S}tokes equations in sharp {S}obolev spaces},
	year = {2025}}

@article{CM05,
	author = {M. Christ},
	journal = {Preprint available https://math. berkeley. edu/mchrist/Papers/stokes. pdf},
	title = {NONUNIQUENESS OF GENERALIZED SOLUTIONS OF THE {N}AVIER-{S}TOKES EQUATION},
	year = {2005}}

@article{DS13,
	author = {C. {De Lellis} and L. Sz\'{e}kelyhidi, Jr.},
	journal = {Invent. Math.},
	number = {2},
	pages = {377--407},
	title = {Dissipative continuous {E}uler flows},
	volume = {193},
	year = {2013}}

@article{NY2025,
	author = {Y. Nie and W. Ye},
	journal = {Journal of Nonlinear Science},
	number = {5},
	pages = {102},
	title = {Sharp and Strong Nonuniqueness for the Magnetohydrodynamic Equations},
	volume = {35},
	year = {2025}}

@article{MY24,
	author = {C. Miao and W. Ye},
	journal = {J. Math. Pures. Appl.},
	pages = {190-227},
	title = {On the weak solutions for the {MHD} systems with controllable total energy and cross helicity},
	volume = {181},
	year = {2024}}

@article{MNY25,
	author = {C. Miao and Y. Nie and W. Ye},
	journal = {Ann. PDE},
	number = {2},
	title = {On {O}nsager-Type Conjecture for the {E}ls{\"a}sser Energies of the Ideal {MHD} Equations},
	volume = {11},
	year = {2025}}

@article{Nash1954,
	author = {J. Nash.},
	journal = {Ann. of Math. (2)},
	pages = {383--396},
	title = {{$C^1$} isometric imbeddings},
	volume = {60},
	year = {1954}}

@article{MI26,
	author = {M. Fujii},
	journal = {arXiv:2602.19846},
	title = {Sharp non-uniqueness for the {Navier--Stokes} equations in scaling critical spaces},
	year = {2026}}

@article{MS26,
	author = {M. P. Coiculescu and S. Palasek},
	journal = {Invent. Math.},
	number = {1},
	pages = {165-219},
	title = {Non-uniqueness of smooth solutions of the {Navier--Stokes} equations from critical data},
	volume = {244},
	year = {2026}}

@article{CH26,
	author = {A. Cheskidov and H. Hou},
	journal = {arXiv:2603.03666},
	title = {On non-uniqueness of mild solutions and stationary singular solutions to the {Navier-Stokes} equations},
	year = {2026}}

@article{nv23,
	author = {M. Novack and V. Vlad},
	journal = {Invent. Math.},
	number = {1},
	pages = {223--323},
	title = {An intermittent {O}nsager theorem},
	volume = {233},
	year = {2023}}

@article{cl22,
	author = {A. Cheskidov and X. Luo},
	journal = {Anal. \& PDE},
	number = {6},
	pages = {2161-2177},
	title = {Extreme temporal intermittency in the linear {Sobolev} transport: almost smooth nonunique solutions},
	volume = {17},
	year = {2024}}

@article{bbv20,
	author = {R. Beekie and T. Buckmaster and V. Vicol},
	journal = {Ann. PDE},
	number = {1},
	pages = {Paper No. 1, 40pp},
	title = {Weak solutions of ideal {MHD} which do not conserve magnetic helicity},
	volume = {6},
	year = {2020}}

@article{bdis15,
	author = {T. Buckmaster and C. De Lellis and P. Isett and L. Sz\'{e}kelyhidi, Jr.},
	journal = {Ann. of Math. (2)},
	number = {1},
	pages = {127--172},
	title = {Anomalous dissipation for {$1/5$}-{H}\"{o}lder {Euler} flows},
	volume = {182},
	year = {2015}}

@article{bv19b,
	author = {T. Buckmaster and V. Vicol},
	journal = {Ann. of Math. (2)},
	number = {1},
	pages = {101--144},
	title = {Nonuniqueness of weak solutions to the {Navier-Stokes} equation},
	volume = {189},
	year = {2019}}

@article{CKS97,
	author = {R. E. Caflisch and I. Klapper and G. Steele},
	journal = {Comm. Math. Phys.},
	number = {2},
	pages = {443--455},
	title = {Remarks on singularities, dimension and energy dissipation for ideal hydrodynamics and {MHD}},
	volume = {184},
	year = {1997}}

@article{cl21,
	author = {A. Cheskidov and X. Luo},
	journal = {Ann. PDE},
	number = {1},
	pages = {1-45},
	title = {Nonuniqueness of weak solutions for the transport equation at critical space regularity},
	volume = {7},
	year = {2021}}

@article{EPT26,
	author = {A. Enciso and J. {Pe{\~n}afiel-Tom{\'a}s} and D. Peralta-Salas},
	journal = {arXiv:2507.23749},
	title = {H\"older continuous dissipative solutions of ideal {MHD} with nonzero helicity},
	year = {2025}}

@article{cl20.2,
	author = {A. Cheskidov and X. Luo},
	journal = {Invent. math.},
	number = {3},
	pages = {987-1054},
	title = {Sharp nonuniqueness for the {Navier-Stokes} equations},
	volume = {229},
	year = {2022}}

@article{dai18,
	author = {M. Dai},
	journal = {SIAM J. Math. Anal.},
	number = {5},
	pages = {5979--6016},
	title = {Non-uniqueness of {Leray-Hopf} weak solutions of the 3d {Hall-MHD} system},
	volume = {53},
	year = {2021}}

@article{dls09,
	author = {C. {De Lellis} and L. Sz\'{e}kelyhidi, Jr.},
	journal = {Ann. of Math. (2)},
	number = {3},
	pages = {1417--1436},
	title = {The {Euler} equations as a differential inclusion},
	volume = {170},
	year = {2009}}

@article{fl20,
	author = {D. Faraco and S. Lindberg},
	journal = {Comm. Math. Phys.},
	number = {2},
	pages = {707--738},
	title = {Proof of {Taylor's} conjecture on magnetic helicity conservation},
	volume = {373},
	year = {2020}}

@article{FL22,
	author = {D. Faraco and S. Lindberg},
	journal = {Geophysical \& Astrophysical Fluid Dynamics},
	number = {4},
	pages = {25pp},
	title = {Rigorous results on conserved and dissipated quantities in ideal {MHD} turbulence},
	volume = {116},
	year = {2022}}

@article{wu03,
	author = {J. Wu},
	journal = {J. Differential Equations},
	number = {2},
	pages = {284--312},
	title = {Generalized {MHD} equations},
	volume = {195},
	year = {2003}}

@article{I18,
	author = {P. Isett},
	journal = {Ann. of Math. (2)},
	number = {3},
	pages = {871--963},
	title = {A proof of {Onsager's} conjecture},
	volume = {188},
	year = {2018}}

@article{KL07,
	author = {E. Kang and J. Lee},
	journal = {Nonlinearity},
	number = {11},
	pages = {2681--2689},
	title = {Remarks on the magnetic helicity and energy conservation for ideal magneto-hydrodynamics},
	volume = {20},
	year = {2007}}

@article{lqzz22,
	author = {Y. Li and Z. Zeng and D. Zhang},
	journal = {J. Math. Pure. Appl.},
	pages = {232-285},
	title = {Non-uniqueness of weak solutions to 3{D} magnetohydrodynamic equations},
	volume = {165},
	year = {2022}}

@article{lqzz24,
	author = {Y. Li and P. Qu and Z. Zeng and D. Zhang},
	journal = {J. Math. Pures Appl.(9)},
	pages = {Paper No. 103602, 64pp},
	title = {Sharp non-uniqueness for the 3{D} hyperdissipative {Navier-Stokes} equations: beyond the {Lions} exponent},
	volume = {190},
	year = {2024}}

@article{lq20,
	author = {T. Luo and P. Qu},
	journal = {J. Differential Equations},
	number = {4},
	pages = {2896--2919},
	title = {Non-uniqueness of weak solutions to 2{D} hypoviscous {Navier-Stokes} equations},
	volume = {269},
	year = {2020}}

@article{lt20,
	author = {T. Luo and E. S. Titi},
	journal = {Calc. Var. Partial Differential Equations},
	number = {3},
	pages = {Paper No. 92, 15pp},
	title = {Non-uniqueness of weak solutions to hyperviscous {Navier-Stokes} equations: on sharpness of {J.-L. Lions} exponent},
	volume = {59},
	year = {2020}}

@article{luo19,
	author = {X. Luo},
	journal = {Arch. Ration. Mech. Anal.},
	number = {2},
	pages = {701--747},
	title = {Stationary solutions and nonuniqueness of weak solutions for the {Navier-Stokes} equations in high dimensions},
	volume = {233},
	year = {2019}}

@article{ms18,
	author = {S. Modena and L. Sz\'{e}kelyhidi, Jr.},
	journal = {Ann. PDE},
	number = {2},
	pages = {Paper No. 18, 38pp},
	title = {Non-uniqueness for the transport equation with {Sobolev} vector fields},
	volume = {4},
	year = {2018}}

@article{ST83,
	author = {M. Sermange and R. Temam},
	journal = {Comm. Pure Appl. Math.},
	number = {5},
	pages = {635--664},
	title = {Some mathematical questions related to the {MHD} equations},
	volume = {36},
	year = {1983}}

@article{taylor86,
	author = {J. B. Taylor},
	journal = {Reviews of Modern Physics},
	number = {3},
	pages = {741},
	title = {Relaxation and magnetic reconnection in plasmas},
	volume = {58},
	year = {1986}}

\end{document}